\DeclarePairedDelimiter\floor{\lfloor}{\rfloor}
\newtheorem{theorem}{Theorem}[section]
\newtheorem{proposition}{Proposition}
\newtheorem{lemma}[theorem]{Lemma}
\newtheorem*{thm1.1}{\bf Theorem 1.1}
\newtheorem*{thm1.2}{\bf Theorem 1.2 (Unconditional)}
\newtheorem*{lem5.3}{\bf Lemma 5.3}
\theoremstyle{definition}
\newtheorem{corollary}{Corollary} 
\theoremstyle{remark}
\newtheorem{remark}[]{{ \bf Remark:}}
\begin{document}
\title{W\lowercase{eyl} \lowercase {bound for $p$-power twist of}  $GL(2)$ L-\lowercase{functions } }


\author{Ritabrata Munshi$^{(1)}$
  \and
  Saurabh Kumar Singh $^{(2)}$
} 

\address{$^{(1)}$ Current address : Stat-Math Unit,
Indian Statistical Institute, 
203 BT Road,  Kolkata-700108, INDIA.} 

\address{ Permanent address : School of mathematics,
Tata Institute of Fundamental Research, Homi Bhabha Road, 
Colaba,  Mumbai-400005, INDIA.}

\email{ritabratamunshi@gmail.com, \  rmunshi@math.tifr.res.in}

\address{$^{(2)}$ Current addres : Stat-Math Unit,
Indian Statistical Institute, 
203 BT Road,  Kolkata-700108, INDIA.} 

\email{skumar.bhu12@gmail.com}

\subjclass[2010]{  Primary 11F66, 11M41; Secondary 11F55}
\date{\today}

\keywords{ Maass forms, Hecke eigenforms, Voronoi summation formula, Poisson summation formula. }

\begin{abstract}
Let $f$ be a cuspidal eigenform (holomorphic or Maass) on the full modular group $SL(2, \mathbb{Z})$  . Let $\chi$ be a primitive character of modulus $P$. We shall prove the following results:

\begin{enumerate}
\item Suppose $P = p^r$,  where $p$ is a  prime and $r\equiv  0 (\textrm{mod} \ 3)$.  Then we have 
\[
 L\left( f \otimes \chi, \frac{1}{2}\right) \ll_{f, \epsilon} P^{1/3 +\epsilon}, 
\] where $\epsilon > 0$ is any positive real number.

\item Suppose $\chi$ factorizes  as $\chi= \chi_1 \chi_2$, where $ \chi_i\textquotesingle s  $  are primitive character modulo $P_i$, where $P_i$ are primes, $P^{1/2 -\epsilon} \ll P_i \ll P^{1/2 + \epsilon}$   for $i=1,2$ and $P=P_1 P_2$. We have the Burgess bound
\[
 L\left( f \otimes \chi, \frac{1}{2}\right) \ll_{f, \epsilon}  P^{3/8 +\epsilon}, 
\] where $\epsilon > 0$ is any positive real number.
\end{enumerate}
\end{abstract} 
\maketitle 

\section{ Introduction }

Let $f $  be a holomorphic Hecke eigenform, or a Maass cusp form for the full modular group $ SL(2, \mathbb{Z})$. Let $\chi $ be a primitive character of modulus $P$. The twisted automorphic $L$-function of degree two associated to $(f, \chi)$ is defined as

%
 
\[
L(f\otimes \chi, s) := \sum_{n=1}^\infty \frac{\lambda_f(n) \chi (n) }{ n^s} ,
\] 
where $\Re s > 1$ and $\lambda_f(n)$ are normalised Fourier coefficients of $f$. These were studied by Hecke who proved that they are entire and they satisfy a functional equation relating $s$ to $1-s$.  We fix the form $f$ and vary the character $\chi$. One of the important problems in $L$-function theory is to provide an upper bound for $ L(f\otimes \chi, s)$ at the central point $s= 1/2$. The functional equation and the Phragmen-Lindel{\" o}f principle together with asymptotic of the Gamma functions  give us the convexity bound $ L(f\otimes \chi, 1/2) \ll_{f, \epsilon} P^{1/2+\epsilon}$. We shall prove the following results:

 \begin{theorem} \label{theorem 1} 
Let $f$ be a holomorphic eigenform of integral weight, or a weight zero Maass cusp form on the full modular group $SL(2, \mathbb{Z})$. Let $\chi$ be a primitive character of modulus $p^r$,  where $p$ is a  prime.  We have 
\[
 L\left( f \otimes \chi, \frac{1}{2} \right) \ll_{f, \epsilon} p^{\frac{1}{2} \left( r - \floor{r/3}\right) + \epsilon}, 
\] where $\epsilon > 0$ is any positive real number and $\floor{x} $ is the greatest integer less than or equal to  $x$.
\end{theorem}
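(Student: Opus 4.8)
The plan is to prove the equivalent statement that, after the approximate functional equation, one has $L(f\otimes\chi,1/2)\ll_{f,\epsilon} p^{\epsilon}\sup_{N}|S(N)|/N^{1/2}$, where $S(N)=\sum_{n}\lambda_f(n)\chi(n)\,V(n/N)$ for a fixed smooth bump function $V$ and $N$ runs dyadically up to $p^{r+\epsilon}$; this reduction uses $|\tau(\chi)|=p^{r/2}$ and the Ramanujan bound on average. The convexity bound is the trivial estimate $|S(N)|\ll N^{1+\epsilon}$, and since the conductor of $f\otimes\chi$ is $p^{2r}$ the decisive range is $N\asymp p^{r}$; there it suffices to show $|S(N)|\ll N\,p^{-\floor{r/3}/2+\epsilon}$, i.e.\ a saving of roughly $p^{r/6}$ over the trivial bound.

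To produce this saving I would separate the oscillation of $\lambda_f$ from that of $\chi$ via a conductor--lowering form of the circle method. Introduce an auxiliary variable $m$, attach $\lambda_f$ to $n$ and $\chi$ to $m$, and detect $n=m$ by writing $\mathbf 1_{n=m}=\mathbf 1_{p^{s}\mid n-m}\cdot\delta\!\big((n-m)/p^{s}=0\big)$, where $s\asymp r/3$ is a parameter to be optimised; expand the first factor as $p^{-s}\sum_{b\bmod p^{s}}e(b(n-m)/p^{s})$ and the second by the Duke--Friedlander--Iwaniec delta symbol with moduli $q\le Q$, $Q\asymp(N/p^{s})^{1/2}=p^{(r-s)/2}$. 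Up to a negligible error and harmless smooth and archimedean weights, this writes $S(N)$ as an average over $q\le Q$, over $a\bmod q$ with $(a,q)=1$, and over $b\bmod p^{s}$, of the product of an $n$--sum carrying $\lambda_f(n)\,e\!\big(n(a+bq)/(qp^{s})\big)$ and an $m$--sum carrying $\chi(m)\,e\!\big(-m(a+bq)/(qp^{s})\big)$. The gain of the splitting is that the $n$--sum now only feels a modulus of size $qp^{s}$ instead of $p^{r}$, so its Voronoi dual will be short.

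I would then dualise both inner sums. To the $n$--sum apply the $GL(2)$ Voronoi summation formula for the level one form $f$ with additive twist modulo $qp^{s}$ (the holomorphic and Maass cases being uniform once the Bessel transforms are bounded); the dual runs over $n_{1}\ll (qp^{s})^{2}/N\asymp p^{s}$ and carries $\lambda_f(n_{1})$ and an exponential in $\overline{(a+bq)}\,n_{1}$. To the $m$--sum apply Poisson summation modulo $qp^{r}$; by the Chinese remainder theorem the complete sum factors into a Ramanujan sum modulo $q$, which forces the dual frequency into a fixed class modulo $q$ and hence leaves $O(1)$ admissible frequencies, times a Gauss sum $\tau(\chi)$ modulo $p^{r}$ of size $p^{r/2}$, which vanishes unless that frequency is coprime to $p$. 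Collecting the powers of $N\asymp p^{r}$ and of $\tau(\chi)$, this leaves $S(N)$ equal to an explicit power of $p$ times a sum of $\lambda_f(n_{1})$, with oscillatory weight, over $n_{1}\ll p^{s}$, $b\bmod p^{s}$ and $q\sim Q$; the trivial bound on this sum is still short of the target, so genuine cancellation is needed.

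That cancellation is extracted by Cauchy--Schwarz followed by a second Poisson summation. Applying Cauchy--Schwarz in $n_{1}$ removes the coefficients $\lambda_f(n_{1})$ at the cost of $\big(\sum_{n_{1}\ll p^{s}}|\lambda_f(n_{1})|^{2}\big)^{1/2}\ll p^{s/2+\epsilon}$; opening the square produces a sum over $n_{1}$ of a product of the two dual exponentials indexed by $(a,b,q)$ and $(a',b',q')$, to which Poisson summation modulo $qq'p^{s}$ is applied. The zero frequency gives a diagonal term which, once all smooth and archimedean weights are unwound, reproduces a bound of Weyl strength; the nonzero frequencies give an off--diagonal governed by complete exponential sums modulo $qq'$ and modulo powers of $p$. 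Optimising over the integer $s$ near $r/3$, and taking $s=\floor{r/3}$ (so $Q=p^{(r-\floor{r/3})/2}$), yields $|S(N)|\ll N\,p^{-\floor{r/3}/2+\epsilon}$ and hence the stated estimate; the floor function in the exponent is exactly the loss forced by $s$ being an integer close to, but in general not equal to, $r/3$.

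The main obstacle is the final step, namely bounding the off--diagonal after the second Poisson summation: one must evaluate the resulting $p$--power exponential sums, which couple the $p$--adic ``logarithm'' structure of $\chi$ entering through $\tau(\chi)$ with the Voronoi phases, and establish square--root type cancellation outside a thin family of near--diagonal terms with $q=q'$ and $b\equiv b'$ in a suitable sense, which must be handled separately; it is here that the arithmetic of powers of $p$, and in particular the splitting of the conductor $p^{r}$ into three comparable blocks of size $p^{r/3}$, is used decisively. Two further technical points are the contribution of moduli $q$ in the delta expansion that are divisible by $p$, treated by a minor variant of the argument, and the uniformity of all bounds in the archimedean parameter of $f$ and across the holomorphic and Maass cases.
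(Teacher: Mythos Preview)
Your overall architecture---conductor lowering, circle method, Poisson on the $\chi$-sum, Voronoi on the $\lambda_f$-sum, Cauchy--Schwarz, and a second Poisson---matches the paper's. The concrete gap is the value of the conductor-lowering exponent: the paper takes $\ell = 2\lfloor r/3\rfloor$, not $\lfloor r/3\rfloor$. With $Q=(N/p^{\ell})^{1/2}$ the endgame, after Cauchy and the second Poisson, produces
\[
S(N)\ \ll\ N^{1/2}\bigl(p^{\ell/2}+p^{\,r/2-\ell/4}\bigr),
\]
the first term coming from the diagonal and the second from the off-diagonal; these balance at $\ell=2r/3$, giving the Weyl exponent $p^{r/3}$ and in general the stated $p^{(r-\lfloor r/3\rfloor)/2}$. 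With your choice $s=\lfloor r/3\rfloor$ the off-diagonal term dominates and one only gets $S(N)\ll N^{1/2}p^{5r/12+\epsilon}$, hence $L(f\otimes\chi,1/2)\ll p^{5r/12+\epsilon}$, which is weaker than the Burgess exponent $3/8$.

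This miscount is tied to an omission in your description of the Poisson step on $m$. The complete sum modulo $p^{r}$ is not a bare Gauss sum: up to a factor $\chi(q)$ it equals $\tau(\chi)\,\overline{\chi}\bigl(m-(\overline a+bq)p^{\,r-\ell}\bigr)$, so the $b$-dependence survives Poisson and is carried by this $\overline{\chi}$ factor rather than disappearing into $\tau(\chi)$. After Voronoi, Cauchy, and the second Poisson, the off-diagonal is governed by the sum
\[
\mathcal A\ =\ \sideset{}{^{*}}\sum_{\alpha\bmod p^{\ell}}\overline{\chi}\bigl(m-\alpha\,p^{\,r-\ell}\bigr)\,\chi\bigl(m'+\alpha q\,\overline{n+q'}\,p^{\,r-\ell}\bigr),
\]
and it is the square-root bound $\mathcal A\ll p^{\ell/2}$---proved by writing $\alpha=\alpha_1 p^{\ell/2}+\alpha_2$ and using that $\chi(1+x\,p^{\,r-\ell/2})$ is an additive character, which is where the $p$-adic logarithm structure actually enters---that drives the balance $\ell=2r/3$. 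Your identification of the off-diagonal character sum as the crux is correct, but its precise shape, and hence the optimisation, has to be revised accordingly.
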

\begin{corollary}
Let $f$ and $\chi$ be as above. Further suppose that $ r \equiv 0 ( \textrm{mod} \ 3)$. We have the following Weyl bound
\[
 L \left( f \otimes \chi, \frac{1}{2} \right) \ll_{f, \varepsilon} p^{\frac{r}{3}  + \varepsilon }. 
\]
\end{corollary}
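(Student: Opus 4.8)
The plan is to obtain the corollary as a one-line specialization of Theorem \ref{theorem 1}. The form $f$ and the character $\chi$ already satisfy exactly the hypotheses of Theorem \ref{theorem 1} with modulus $P = p^r$, so that theorem applies verbatim and yields $L(f\otimes\chi,\tfrac12)\ll_{f,\varepsilon} p^{\frac12(r-\floor{r/3})+\varepsilon}$. The only point to verify is that the exponent collapses to $r/3$ under the extra hypothesis $r\equiv 0\ (\mathrm{mod}\ 3)$. Writing $r = 3k$ with $k\in\mathbb{Z}_{\ge 0}$, we have $\floor{r/3} = k$, hence $r-\floor{r/3} = 3k-k = 2k$ and therefore $\tfrac12\bigl(r-\floor{r/3}\bigr) = k = r/3$. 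Substituting this value into the conclusion of Theorem \ref{theorem 1} gives $L(f\otimes\chi,\tfrac12)\ll_{f,\varepsilon} p^{r/3+\varepsilon}$, which is the claimed Weyl bound.

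I would close with the remark that, since $P = p^r$ is the conductor, this is the bound $L(f\otimes\chi,\tfrac12)\ll_{f,\varepsilon} P^{1/3+\varepsilon}$ recorded in the abstract: it is the Weyl-strength subconvexity bound for $GL(2)$ $L$-functions in the (depth) conductor aspect, the exponent $1/3$ being two-thirds of the convexity exponent $1/2$, in analogy with Weyl's classical bound for the Riemann zeta function on the critical line. No input beyond Theorem \ref{theorem 1} is required, and there is no genuine obstacle at this stage: the entire analytic content sits inside Theorem \ref{theorem 1}, while the divisibility condition $3\mid r$ is exactly what ensures the floor function discards none of the available savings. For $r$ not divisible by $3$ one still gets the intermediate bound of Theorem \ref{theorem 1}, but only when $3\mid r$ does it reach the full Weyl exponent.
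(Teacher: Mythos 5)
Your proposal is correct and matches the paper's (implicit) derivation: the corollary is an immediate specialization of Theorem \ref{theorem 1}, since $r\equiv 0\ (\mathrm{mod}\ 3)$ gives $\floor{r/3}=r/3$ and hence $\tfrac12\left(r-\floor{r/3}\right)=r/3$. Nothing further is needed.
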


Using the circle method, we shall also prove the following Burgess bound for $GL(2)$ $L$-functions for a particular case of modulus.  

\begin{theorem} \label{theorem 2}
Let $f$ be a holomorphic eigenform of integral weight, or a weight zero Maass form on the full modular group $SL(2, \mathbb{Z})$. Let $\chi$ be a primitive character of modulus $P$. Further assume that $\chi= \chi_1 \chi_2$, where $ \chi_i \textquotesingle s $ are primitive character modulo $P_i$, where $P_i \textquotesingle s $ are primes, $i=1,2$. We have 
\[
 L\left( f\otimes \chi, \frac{1}{2}\right) \ll_{f, \epsilon} P^{3/8 +\varepsilon}, 
\] where $\varepsilon > 0$ is any positive real number.
\end{theorem}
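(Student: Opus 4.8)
The plan is to attack the central value via an approximate functional equation, reducing matters to bounding a sum of the shape $\sum_{n\sim N}\lambda_f(n)\chi(n)$ with $N$ of size roughly $P$. Since $\chi=\chi_1\chi_2$ with the two prime moduli $P_1,P_2$ each of size about $P^{1/2}$ (by the argument's self-improving nature, or by invoking the hybrid of Theorem \ref{theorem 1} with a trivial bound when the factorization is lopsided, one reduces to the balanced range $P^{1/2-\epsilon}\ll P_i\ll P^{1/2+\epsilon}$ used in the abstract). The main tool is the delta-method (circle method): write $\delta(n=0)$ via the Duke--Friedlander--Iwaniec expansion with moduli $q\sim \sqrt{N/K}$ for a well-chosen parameter $K$, introduced to separate the oscillation of $\lambda_f(n)\chi(n)$ from an auxiliary variable $m$. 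Concretely, I would study $S=\sum_n \lambda_f(n)\chi(n)V(n/N)$ by detecting the equation $n-m=0$ after $m$ has been given a smooth weight, thereby creating a bilinear structure in $(n,m)$ that makes both a $GL(2)$ Voronoi summation (on $n$) and a Poisson summation (on $m$, exploiting the factorization $P=P_1P_2$) available.

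The key steps, in order: (i) set up the approximate functional equation and a smooth dyadic partition, so that it suffices to bound $S$ for $N\ll P^{1+\epsilon}$; (ii) apply the circle method to write $S$ in terms of sums over moduli $q$, with $q$ split as $q=q_1q_2$ where $q_1$ interacts with $\chi_1$ (modulus $P_1$) and $q_2$ with $\chi_2$ (modulus $P_2$) — this is where the hypothesis that $\chi$ factors into two primes of comparable size is essential; (iii) apply $GL(2)$ Voronoi summation to the $n$-sum: the conductor of $\lambda_f(n)\chi(n)$ against the additive character modulo $qP$ dualizes $n$ of length $N$ to a dual variable $\tilde n$ of length $(qP)^2/N$; (iv) apply Poisson summation to the $m$-sum modulo $qP_1P_2$ (or to the relevant pieces), producing a dual variable of small length and, crucially, a character sum / exponential sum that one evaluates explicitly; (v) estimate the resulting character sums — these will be products of Gauss sums, Kloosterman-type sums, and Ramanujan sums coming from the two prime moduli — using Weil's bound, and then bound the remaining sum over $q,\tilde n$ and the dual of $m$ trivially together with the Cauchy--Schwarz inequality. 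Balancing the parameter $K$ (equivalently the modulus length $q$) against the savings obtained should yield $S\ll P^{3/4+\epsilon}$, hence $L(f\otimes\chi,1/2)\ll P^{3/8+\epsilon}$.

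The main obstacle I anticipate is step (v), specifically controlling the diagonal contribution and the degenerate (zero-frequency) terms after the two dual summations: when the dual variables vanish or the character sums fail to exhibit square-root cancellation (e.g. the Ramanujan sums become large, or there is a "resonance" between $q$ and the $P_i$), one must show these terms are genuinely lower order. A secondary difficulty is ensuring the two applications of summation formulae — Voronoi on $n$ and Poisson on $m$ — are compatible, i.e. that the coprimality conditions between $q$, $P_1$, $P_2$, and the various dual variables are handled uniformly; keeping track of the arithmetic conductors so that the analytic conductor at each stage is exactly $P$ (and not larger) is what forces $K$ to be chosen as a specific power of $P$. A Cauchy--Schwarz step to smooth out the $q$-sum before the final estimate will likely be needed, and its success depends delicately on the off-diagonal in the resulting $q$-averaged sum still satisfying square-root cancellation.
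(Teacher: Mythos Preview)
Your overall architecture---approximate functional equation, a delta symbol to create a bilinear structure in $(n,m)$, Voronoi on $n$, Poisson on $m$, Cauchy--Schwarz, then Weil-type bounds---matches the paper. But there is one structural point where your plan diverges from what is actually needed, and as written the numerics would not close.

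The essential use of the factorisation $\chi=\chi_1\chi_2$ is not to split the circle-method modulus as $q=q_1q_2$, nor is it exploited only inside the Poisson step. It is used at the very moment one introduces the dummy variable $m$: since $\delta(n-m)$ forces $n=m$, one writes $\chi(n)=\chi_1(n)\chi_2(m)$, so that only $\chi_1$ remains attached to $\lambda_f(n)$ while $\chi_2$ travels with $m$. One then performs conductor lowering modulo $P_1$ (this is your parameter $K$; it is not free but must equal $P_1$), so the Kloosterman circle method runs with $Q=(N/P_1)^{1/2}$. The payoff is that the Voronoi summation on $n$ sees $\lambda_f(n)\chi_1(n)$ against an additive character of modulus $P_1q$, giving a dual sum of length $(P_1q)^2/N\asymp P_1$. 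In your step (iii) you apply Voronoi to $\lambda_f(n)\chi(n)$ with conductor $qP$; the dual length is then larger by a factor of $P_2^2$, and no subsequent choice of $K$ or splitting of $q$ will recover this loss. This asymmetric distribution of $\chi_1$ and $\chi_2$ across the two variables is the heart of the argument.

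A second, smaller omission: after Voronoi and Cauchy over the dual $n$-variable (not over $q$), the paper does not finish ``trivially together with Cauchy--Schwarz''. One needs a \emph{second} Poisson summation in that dual variable, which produces a congruence linking the two copies of the Gauss-sum variable $\alpha\pmod{P_1}$ coming from opening $\chi_1$. The resulting complete sum over $\alpha$ is a rational-function character sum bounded by $P_1^{1/2}$ via Weil. Without this second Poisson one sits exactly on the convexity boundary; your description in step (v) does not make this explicit.
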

 Let us briefly recall the history of sub-convexity bounds for $L$-functions. The convexity bound for   the Riemann zeta function is given by $\zeta (1/2+ it)  \ll t^{1/4 + \varepsilon}$. For a Dirichlet $L$-function associated with a primitive Dirichlet character $\chi$ of modulus $q$, the convexity bound is given by  $L(1/2, \chi)\ll q^{1/4 + \varepsilon}$. Lindel{\" o}f hypothesis, which is a consequence of Riemann hypothesis, asserts that the exponent $1/4 + \varepsilon$ can be replaced by $\varepsilon$. Sub-convexity bound for $\zeta(s)$ was first proved by G. H. Hardy and J. E. Littlewood, based on the work of Weyl \cite{HW}. Establishing a bound for exponential sums, it has been proved that (see also \cite[page 99, Theorem 5.5]{ECT}) 
  \begin{equation} \label{weyl for gl1}
   \zeta(1/2 + it) \ll t^{1/6} \log^{3/2}t.
  \end{equation}
 It was first written down by E. Landau \cite{EL} in a slightly refined form, and has been generalised to all Dirichlet L-functions.  Since then it has been improved by several people. The best known result with exponent $ 13/ 84 \approx 0.15476 $ is  due to J. Bourgain \cite{BJ}.  In the other hand, $q$-aspect sub-convexity bound was first proved by D. A . Burgess \cite{DB}.  Using cancellation in character sums in short interval, he proved that 
 \begin{equation} \label{burger for gl2}
  L\left(  s, \chi \right) \ll_\epsilon q^{3/16 + \varepsilon}, 
  \end{equation}
 for fixed $s$ with $\Re s = 1/2$ and for any $\varepsilon >0$.  D. R. Heath-Brown \cite{HB} proved the  hybrid bound (bound in both parameters $q$ and $t$ together)  for Dirichlet $L$-functions. Recently introducing a theory to estimate exponential sums of the form
\[
\sum_{M < m \leq M + B} e \left( \frac{f(m)}{p^n}\right),
\]  where $f(t)$ is an analytic function on the ring of $p$-adic integers $\mathbb{Z}_p$, D. Mili{\' c}evi{\' c} \cite{DM} obtained the sub-Weyl exponent $0.1645$. More precisely, he proved the following theorem.

\begin{theorem} \label{theorem mili}
{\bf (D. Mili{\' c}evi{\' c} \cite{DM})}  Let $\theta> \theta_0 \approx 0.1645$ be given. There is an $r\geq 0$ such that 
\[
L(1/2, \chi) \ll p^r  q^\theta (\log q)^{1/2}
\] holds for any Dirichlet character $\chi$ to any prime power modulus $q=p^n$.
\end{theorem}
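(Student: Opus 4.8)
The plan is to deduce the bound from the approximate functional equation combined with a $p$-adic analogue of the van der Corput method for exponential sums. The underlying principle is that a primitive character $\chi$ modulo a prime power $q=p^{n}$ is, on arithmetic progressions whose common difference is a sufficiently high power of $p$, an ordinary complex exponential of a $p$-adically analytic phase (via the $p$-adic logarithm), so that a short character sum modulo $p^{n}$ is really a $p$-adic exponential sum and can be attacked by the same repertoire of moves --- Weyl differencing, Poisson summation, stationary phase, composition of exponent pairs --- that produces the bound $\zeta(1/2+it)\ll t^{1/6}$, and in fact does slightly better, because here the phases are genuinely analytic rather than merely polynomial.

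First, the reduction. When $n$ is bounded the convexity bound $L(1/2,\chi)\ll q^{1/2+\epsilon}=p^{n/2+\epsilon}$ already has the required shape once $r$ is a large enough absolute constant, so we may assume $n$ large. By the approximate functional equation for $L(s,\chi)$ at $s=1/2$ one has, up to a factor $q^{\epsilon}$ that will later be absorbed into a slightly larger $\theta$,
\[
L(1/2,\chi)\ \ll\ q^{\epsilon}\max_{1\leq N\ll q^{1/2+\epsilon}} N^{-1/2}\,|S(N)|,\qquad S(N):=\sum_{m\geq 1}\chi(m)\,V\!\left(\frac{m}{N}\right),
\]
for a fixed smooth compactly supported $V$, the clean $(\log q)^{1/2}$ being recovered from a sharper form of the approximate functional equation. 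Thus it suffices to prove $S(N)\ll p^{r}N^{1/2}q^{\theta}$. For $N\ll q^{2\theta}$ the trivial bound $|S(N)|\leq N$ already gives this, so all the content lies in the range $q^{2\theta}\ll N\ll q^{1/2+\epsilon}$, i.e. in saving a factor $(N/q^{2\theta})^{1/2}$ --- of size up to $q^{1/4-\theta}$ --- over the trivial bound for a short character sum modulo $p^{n}$.

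Next, the $p$-adic linearisation and the van der Corput iteration, which is the crux. Split $[N,2N]$ into residue classes modulo $p^{b}$ for a suitably chosen $b$; by Postnikov's formula, on the class of $x$ coprime to $p$ one has $\chi\bigl(x(1+p^{b}y)\bigr)=\chi(x)\,e\bigl(F_{x}(y)/p^{\,n-b}\bigr)$, where $F_{x}$ is an explicit polynomial --- a truncation of $p^{-b}\log_{p}(1+p^{b}y)$ --- with coefficients of controlled $p$-adic valuation, so the inner sum over $y$ is a complete or incomplete $p$-adic exponential sum $\sum_{y}e(f(y)/p^{\,n-b})$ with $f$ $p$-adically analytic. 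On such sums I would iterate two processes: the $A$-process ($p$-adic Weyl differencing), which replaces $f(y)$ by $f(y+u)-f(y)$ and lowers the effective power of $p$ in the modulus by the valuation thereby gained, at the cost of an averaged shorter sum; and the $B$-process ($p$-adic Poisson summation followed by stationary phase), which transforms $\sum_{y}e(f(y)/p^{j})$ into a dual sum $\sum_{\nu}(\text{explicit Gauss-type weight})\,e\bigl(-\tilde f(\nu)/p^{j'}\bigr)$ governed by the $p$-adic Legendre transform $\tilde f$ of $f$, the archimedean main-term/error analysis of the classical $B$-process being replaced by exact evaluation of complete quadratic exponential sums modulo powers of $p$. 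Composing $A$ and $B$ a bounded number of times produces a ``$p$-adic exponent pair'' $(\kappa,\lambda)$ --- the analogue of the classical pair $(1/6,2/3)$, which already recovers the Weyl exponent --- and, by using the analyticity of the phases to deploy the two processes more efficiently, one can push the exponent slightly below $1/6$, to $\theta_{0}\approx 0.1645$. Crucially each application of $A$ or $B$ costs only an absolutely bounded power of $p$; accumulated over the $O(1)$ steps of the iteration this is exactly the factor $p^{r}$ in the statement, with $r=r(\theta)$ independent of $p$, $n$ and $\chi$.

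Finally, assemble the pieces: inserting the exponent-pair estimate into the residue-class decomposition of $S(N)$, choosing $b$ to optimise, and summing over dyadic $N\ll q^{1/2+\epsilon}$ gives $S(N)\ll p^{O(1)}N^{1/2}q^{\theta_{0}+\epsilon}$, whence $L(1/2,\chi)\ll p^{r}q^{\theta_{0}+\epsilon}(\log q)^{1/2}$; absorbing the $q^{\epsilon}$ into any fixed $\theta>\theta_{0}$ yields the clean statement. The main obstacle is making the $B$-process rigorous for $p$-adically analytic (non-polynomial) phases: one must prove a usable $p$-adic stationary-phase lemma, identify the stationary points as the zeros --- modulo the relevant power of $p$ --- of the derivative of the phase, handle the degenerate ranges where $f'$ or $f''$ has large valuation (the $p$-adic analogue of flat stationary phase), and keep every estimate uniform so that the accumulated powers of $p$ stay bounded. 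That bookkeeping, together with the choice of iteration scheme needed to beat the exponent $1/6$, is where essentially all the difficulty lies; the approximate functional equation and the Postnikov linearisation are, by comparison, routine.
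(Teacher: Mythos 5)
This statement is not proved in the paper at all: it is quoted verbatim as a known result of Mili\'cevi\'c, with the citation \cite{DM} standing in for the proof. So there is no internal argument to compare yours against; the only fair comparison is with the cited work itself.

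Your sketch does correctly reproduce the architecture of Mili\'cevi\'c's argument: approximate functional equation reducing to short sums $S(N)$ with $N\ll q^{1/2+\epsilon}$, Postnikov's formula converting $\chi(x(1+p^{b}y))$ into $\chi(x)e(F_x(y)/p^{n-b})$ with a $p$-adically analytic phase, and then a $p$-adic van der Corput theory ($A$- and $B$-processes, exponent pairs) whose bounded accumulated losses account for the factor $p^{r}$. But as a proof it has a genuine gap, which you yourself locate: everything that actually yields a nontrivial exponent --- the rigorous $p$-adic stationary phase lemma underlying the $B$-process, the treatment of degenerate ranges where $f'$ or $f''$ has large valuation, the uniformity of the $p$-power losses across the iteration, and above all the specific composition scheme that pushes the exponent below the Weyl barrier $1/6$ down to $\theta_0\approx 0.1645$ --- is named but not carried out. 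Without at least a precise statement of the $p$-adic exponent-pair inequality being invoked and a verification that the chosen iteration achieves $\theta_0$, the argument establishes nothing beyond the trivial bound in the critical range $q^{2\theta}\ll N\ll q^{1/2+\epsilon}$. A second, smaller issue: the clean factor $(\log q)^{1/2}$ cannot simply be ``recovered from a sharper form of the approximate functional equation'' after you have already conceded a $q^{\epsilon}$ loss; in \cite{DM} this requires a genuinely $\epsilon$-free mean-value argument, so that step as written is circular.
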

 From the above theorem we observe that we have a sub-convexity exponent which is less than  $1/6$ for a prime power modulus $q=p^n$ with $n\geq n_0$,  a sufficiently large number.  
    
 The $t$-aspect  Weyl exponent  for  $GL(2)$ $L$-functions is expected to be $1/3.$ For holomorphic forms,   this was first proved by Anton Good \cite{GOOD} using the spectral theory of automorphic functions. M. Jutila \cite{MJ} has given an alternative proof,   based only on the functional properties of $L(f, s)$ and $L(f\otimes \chi, s)$, where $\chi$ is an additive character. The arguments used in his proof were flexible enough to be adopted for the Maass cusp forms, as shown by   T. Meurman \cite{MERU1}, who proved the result for Maass cusp forms. 
 
However, the $q$-aspect sub-convexity bound seems to be a harder problem. It was first obtained by Duke–Friedlander–Iwaniec  using a new form of circle method. Assuming $\chi$ to be a primitive character of modulus $q$ and  $\Re s = 1/2$,  they obtained (see \cite[Theorem 1]{DFI}) $L(f\otimes \chi, s)\ll_f |s|^2 q^{5/11} \tau^2(q) \log q $, where $\tau(q)$ is the divisor function.  In the case of a general holomorphic cusp form, V. A. Bykovski{\v i} \cite{BYKO} used a trace formula expressing the mean values of the form to derive  a  stronger sub-convexity exponent $3/8$. G. Harcos used a similar method  given in \cite{DFI} to prove sub-convexity bound for Maass cusp forms as well. Refining the arguments used in \cite{BYKO}, V. Blomer and G. Harcos \cite{BH} obtained the Burgess exponent $ 3/8$ for a more general holomorphic or Maass cusp form. Till date, a proof of Weyl exponent $1/3$ exists only for quadratic characters, courtesy to the fundamental work of B. Conrey and  H. Iwaniec \cite{CI} and is not known for any other character. They proved that (see \cite[Corollary 1.2]{CI})
 \[
 L(f\otimes\chi, 1/2) \ll_{\varepsilon} q^{1/3 + \varepsilon},
 \] where $\chi$ is a quadratic character and $f$  is a primitive cusp form of weight $k\geq 12$ and level dividing $q$, or $f(z) =E(z, 1/2 +it) $,  Eisenstein series of full level. Extending the result of Theorem \ref{theorem mili} to $GL(2)$ $L$-functions, V. Blomer and D. Mili{\' c}evi{\' c} obtained the following theorem (see \cite[Theorem 2]{BM}). 
 \begin{theorem} \label{thm blomer mili}
 {\bf (V. Blomer and  D. Mili{\' c}evi{\' c} \cite{BM})} Let $p$ be an odd prime. Let $f$ be a holomorphic or Maass cuspidal newform for $SL(2, \mathbb{Z})$, and let $\chi$ be a primitive character of conductor $q=p^n$. Let $t\in \mathbb{R}.$ Then one has
\[
L( f \otimes \chi, 1/2+ it) \ll_{f, \varepsilon} (1+|t|)^{5/2} p^{7/6} q^{1/3 + \varepsilon}.
\] 
 
 \end{theorem}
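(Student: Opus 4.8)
The plan is to prove Theorem~\ref{theorem 1} by Munshi's circle method with conductor lowering; the Corollary (the Weyl bound $p^{r/3+\varepsilon}$ when $3\mid r$) is then the special case $r=3\floor{r/3}$.

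\textbf{Step 1: Reduction to a smooth sum.} Since $f$ has level one, $L(f\otimes\chi,s)$ has analytic conductor $\asymp p^{2r}$, and the approximate functional equation writes $L(f\otimes\chi,1/2)$ in terms of dyadic pieces
\[
S(N)\;=\;\sum_{n\ge 1}\lambda_f(n)\,\chi(n)\,V\!\left(\frac nN\right),\qquad 1\le N\ll p^{\,r+\varepsilon},
\]
with $V$ a fixed smooth bump function; the claimed bound follows once $S(N)\ll p^{\,r-\frac12\floor{r/3}+\varepsilon}$. Because $\sum_{n\le x}|\lambda_f(n)|\ll x^{1+\varepsilon}$, the trivial estimate already gives $N^{-1/2}|S(N)|\ll p^{r/3+\varepsilon}$ for $N\ll p^{2r/3}$, so the problem lives in the range $p^{2r/3}\ll N\ll p^{r+\varepsilon}$, and I describe the argument in the decisive case $N\asymp p^{r}$; the smaller $N$ only force a re-optimisation of the parameter below.

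\textbf{Step 2: Conductor-lowering delta method.} Put $\ell=\floor{r/3}$ and $K=p^{\ell}$. Introducing a fresh variable $m$ against a smooth weight $w$ supported on $\asymp N$ that is $1$ on the support of $V(\cdot/N)$, write $S(N)=\sum_{n,m}\lambda_f(n)\chi(n)w(m)\,\delta(n=m)$ and, following Munshi, split
\[
\delta(n=m)=\delta\bigl(n\equiv m\ (\mathrm{mod}\ K)\bigr)\,\delta\!\left(\frac{n-m}{K}=0\right),
\]
detecting the congruence by additive characters modulo $K$ and, since $|n-m|/K\ll N/K$, the vanishing of the integer $(n-m)/K$ by the Duke--Friedlander--Iwaniec delta symbol \cite{DFI} of modulus $Q\asymp(N/K)^{1/2}=p^{(r-\ell)/2}$. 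This presents $S(N)$, up to a harmless archimedean integration, as an average over $q\sim Q$, over $a\ (\mathrm{mod}\ q)^{\ast}$ and $b\ (\mathrm{mod}\ K)$, of a product
\[
\Bigl(\,\sum_n\lambda_f(n)\,e\!\Bigl(\tfrac{cn}{qK}\Bigr)V\!\bigl(\tfrac nN\bigr)\Bigr)\Bigl(\,\sum_m\chi(m)\,e\!\Bigl(-\tfrac{cm}{qK}\Bigr)w\!\bigl(\tfrac mN\bigr)\Bigr),\qquad c=bq+a
\]
(the sparse $q,b$ with $p\mid c$ handled separately). The parameter $\ell$ is free; $\ell=\floor{r/3}$ is the value that balances the final count, and since $\ell$ must be an integer this is exactly why $\floor{r/3}$, not $r/3$, appears in the exponent.

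\textbf{Step 3: Dualising both sums.} Apply the $GL(2)$ Voronoi summation formula to the $n$-sum with modulus $qK$: it becomes a sum over $n'\ll(qK)^2/N\asymp K=p^{\ell}$ carrying $\lambda_f(n')$, a phase of modulus $qK$ in $n'$, and a Bessel-type weight. Apply Poisson summation to the $m$-sum: the primitive character $\chi$ modulo $p^{r}$ and the additive character modulo $qK$ (with $\ell<r$) combine to modulus $qp^{r}$, producing a sum over $m^{\ast}\ll qp^{r}/N\asymp p^{(r-\ell)/2}$ against a complete character sum modulo $qp^{r}$, which by the Chinese remainder theorem factors as a Gauss sum modulo $p^{r}$ times a Ramanujan/Kloosterman sum modulo $q$; crucially, primitivity of $\chi$ forces the Gauss sum to be either $0$ or of modulus exactly $p^{r/2}$, so no degenerate large terms appear from the $p$-part. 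Collecting, $S(N)$ is rewritten, schematically, as
\[
S(N)\;\approx\;\frac{p^{r/2}}{KQ}\sum_{q\sim Q}\ \sum_{b\ (\mathrm{mod}\ K)}\ \sum_{n'\ll K}\lambda_f(n')\sum_{m^{\ast}}\ \frac{\mathfrak{C}(q,a,b,n',m^{\ast})}{(\cdots)}\,\mathcal{I}(\cdots),
\]
$\mathfrak{C}$ being the above complete sum, $a$ summed, and $\mathcal{I}$ a benign oscillatory integral.

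\textbf{Step 4: Endgame and the main obstacle.} To drop the coefficients $\lambda_f(n')$, apply Cauchy--Schwarz in $n'$ using $\sum_{n'\ll K}|\lambda_f(n')|^{2}\ll K^{1+\varepsilon}$; opening the square and running Poisson summation once more in $n'$ (a variable of length $\asymp p^{\ell}$) leaves a diagonal and an off-diagonal term, and the character sums in $a$ and over $q$ are bounded by Weil's estimate for Kloosterman sums. The heart of the proof is the second-moment bound showing the diagonal contribution is $\ll p^{\,r-\frac12\floor{r/3}+\varepsilon}$: it is exactly $\ell=\floor{r/3}$ that makes this match the target, larger $\ell$ making it fail, and the off-diagonal is then smaller by a further power of $p$. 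The difficulties I anticipate --- standard in spirit but needing care --- are: (i) tracking every ``fresh'' frequency and shift through the two Poisson steps so that the modulus in the final Poisson is as large as advertised and no conductor is lost; (ii) the exceptional ranges ($p\mid c=bq+a$; the $n'$ or $m^{\ast}$ meeting the ramification locus), some of which vanish outright by primitivity of $\chi$; (iii) the prime $p=2$, where the structure of characters modulo $2^{r}$ and the Gauss-sum evaluations must be modified; and (iv) the archimedean estimates --- decay of the Voronoi kernel, of $\widehat{w}$, and of the delta-method $\xi$-integral --- dispatched by repeated integration by parts at cost $p^{\varepsilon}$. Assembling Steps 1--4 gives $S(N)\ll p^{\,r-\frac12\floor{r/3}+\varepsilon}$ for $N\asymp p^{r}$, hence $L(f\otimes\chi,1/2)\ll p^{\frac12(r-\floor{r/3})+\varepsilon}$, which is Theorem~\ref{theorem 1}.
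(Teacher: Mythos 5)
You have written a proof sketch of the wrong statement. The result you were asked about is Theorem~\ref{thm blomer mili}, which is a quoted theorem of Blomer and Mili{\'c}evi{\'c} \cite{BM}; the paper does not prove it (it is cited as background), and its actual proof proceeds by entirely different, $p$-adic methods: Postnikov's formula expressing $\chi(1+pt)$ as $e_{p^n}(f(t))$ for a $p$-adic analytic $f$, followed by $p$-adic stationary phase and a van der Corput--type iteration, together with an amplified/spectral treatment to get uniformity in $t$. Your sketch is instead an outline of the authors' own Theorem~\ref{theorem 1} via the Kloosterman/DFI delta method with conductor lowering.

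Even taken on its own terms, your argument does not yield the stated bound. First, the exponent you obtain, $\tfrac12\bigl(r-\floor{r/3}\bigr)$, equals $r/3$ only when $3\mid r$; for general $r$ it is strictly larger (e.g.\ $3/8$ of $r$ for $r=4,8$ and $2/5$ of $r$ for $r=5$), whereas the Blomer--Mili{\'c}evi{\'c} theorem gives the uniform exponent $q^{1/3+\varepsilon}$ (at the cost of a fixed factor $p^{7/6}$) for \emph{every} $n\ge 1$ -- including $n=1$, where your conductor-lowering parameter $\ell=\floor{r/3}$ is $0$ and the method degenerates to convexity. Second, the theorem is a bound at $s=1/2+it$ with explicit polynomial dependence $(1+|t|)^{5/2}$; your sketch works only at the central point and nothing in it tracks the archimedean parameter. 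So the proposal neither reproduces the paper's (nonexistent) proof of this statement nor supplies an alternative proof of it; it addresses a different theorem whose conclusion is genuinely weaker outside the residue class $r\equiv 0\ (\mathrm{mod}\ 3)$.
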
  From the above theorem, we can obtain the  sub-convexity  bound for $n> 7$ and improve the Burgess  exponent  as soon as $n>27$, and the exponent tends to Weyl exponent as $n\rightarrow \infty$.
In this paper we propose a different approach which produces an improvement over the known bounds.  In our Theorem \ref{theorem 1}, we are able to provide an improvement on the Burgess exponent as soon as $ n \geq 3 $, with the exception when $n=4, 8$ (in this case our exponent is same as Burgess exponent) and $n=5$. Main result of our Theorem \ref{theorem 1} is that we are able to achieve the Weyl exponent when $n \geq 3 $ and $ n \equiv 0 ( \textrm{mod} \ 3).$ Let us briefly explain the method of the proof.

\section{Sketch of the proof}
  We have the following general theorem for approximate functional equation of $L(f, s)$ (see \cite[page 98 Theorem 5.3]{IK})
\begin{theorem}
Let $G(u)$ be any function which is holomorhpic and bounded in the strip $-4 < \Re u< 4 $, even, and normalised by $G(0)=1$. Then for $s$ in the strip $ o\leq \Re s \leq 1$ we have

\begin{equation}
L(f, s)= \sum_n \frac{\lambda (n)}{n^s}  V_s \left( \frac{n}{X} \right) + \varepsilon(s, f) \sum_n \frac{\overline{\lambda } (n)}{n^{1-s} } V_{1-s} (nX),
\end{equation}
where $V_s(y)$ is a smooth function defined by
\[
V_s(y) = \frac{1}{2\pi i} \int_{(3)} y^{-u} G(u) \frac{\gamma(f, s+u)}{\gamma(f, s)} \frac{du}{u}
\] and 
\[
\varepsilon(f, s)= \varepsilon(f)  \frac{\gamma(f, 1-s)}{\gamma(f, s)}.
\]
\end{theorem}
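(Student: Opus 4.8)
\medskip
\noindent\emph{Proof sketch.} The argument is the standard contour-shift derivation of the approximate functional equation, carried out through the completed $L$-function $\Lambda(f,s) := \gamma(f,s)\, L(f,s)$. Since $f$ is cuspidal, $L(f,s)$ is entire and its trivial zeros cancel the poles of the archimedean factor $\gamma(f,s)$, so $\Lambda(f,s)$ is entire; moreover it satisfies the functional equation $\Lambda(f,s) = \varepsilon(f)\,\Lambda(\overline f, 1-s)$, where $\overline f$ denotes the dual form, with $L(\overline f, s) = \sum_n \overline{\lambda(n)}\, n^{-s}$, $\gamma(\overline f, s) = \gamma(f,s)$ and $\Lambda(\overline f, s) = \gamma(\overline f, s)\, L(\overline f, s)$. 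I would begin from the auxiliary integral
\[
I \;:=\; \frac{1}{2\pi i}\int_{(3)} \Lambda(f, s+u)\, X^{u}\, G(u)\, \frac{du}{u},
\]
which converges absolutely: on the line $\Re u = 3$ one has $\Re(s+u) \geq 3 > 1$, so the Dirichlet series for $L(f,s+u)$ converges absolutely and is bounded, while Stirling's formula for $\gamma(f,s+u)$ together with the hypothesis that $G$ is holomorphic and bounded in $-4 < \Re u < 4$ forces rapid decay in $\Im u$. Expanding $L(f,s+u) = \sum_n \lambda(n)\, n^{-s-u}$ and interchanging the (absolutely convergent) sum with the integral, the inner integral is exactly $\gamma(f,s)\, V_s(n/X)$, so that
\[
I \;=\; \gamma(f,s)\sum_n \frac{\lambda(n)}{n^{s}}\, V_s\!\left(\frac{n}{X}\right).
\]

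Next I would move the contour from $\Re u = 3$ to $\Re u = -3$. Inside this strip the integrand is holomorphic apart from the simple pole of $1/u$ at $u = 0$ — this is where cuspidality of $f$ is used, to rule out poles of $\Lambda(f,s+u)$ — and the horizontal segments contribute nothing as their height tends to infinity, again by Stirling and the boundedness of $G$. The residue at $u = 0$ equals $\Lambda(f,s)\, G(0) = \Lambda(f,s)$, whence
\[
I \;=\; \Lambda(f,s) \;+\; \frac{1}{2\pi i}\int_{(-3)} \Lambda(f, s+u)\, X^{u}\, G(u)\, \frac{du}{u}.
\]
In the remaining integral I would apply the functional equation to replace $\Lambda(f, s+u)$ by $\varepsilon(f)\,\Lambda(\overline f, 1-s-u)$, substitute $u \mapsto -u$ (legitimate since $G$ is even) to return the contour to $\Re u = 3$, expand $L(\overline f, 1-s+u) = \sum_n \overline{\lambda(n)}\, n^{-(1-s)-u}$, interchange sum and integral once more, and recognise the inner integral as $\gamma(f,1-s)\, V_{1-s}(nX)$. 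Dividing through by $\gamma(f,s)$ and solving for $L(f,s)$ then gives the stated identity, with $\varepsilon(f,s) = \varepsilon(f)\, \gamma(f,1-s)/\gamma(f,s)$.

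It remains to record the routine analytic facts underpinning the above, which I would state explicitly: the absolute convergence that licenses each interchange of summation and integration; the vanishing of the horizontal pieces of the shifted contours; and the assertion that $V_s(y)$, defined by the displayed contour integral, is a well-defined smooth function of $y > 0$ with $V_s(y) \to 1$ as $y \to 0^+$ and $V_s(y) \ll_A y^{-A}$ for every $A > 0$ as $y \to \infty$ — the two tail estimates coming from shifting the defining contour to the left, respectively to the right, of $\Re u = 0$. The one point that genuinely needs care is that all of these rest on Stirling's asymptotics for the $\gamma$-factors together with the boundedness of $G$ in $-4 < \Re u < 4$, which must be invoked to produce the polynomial decay in vertical strips that makes the manipulations valid; the placement of the auxiliary lines $\Re u = \pm 3$ is then the only remaining freedom, subject merely to their lying inside that strip and to the right of the abscissa of absolute convergence. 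Beyond this there is no genuine obstacle: once the analytic continuation and functional equation of $\Lambda(f,s)$ are taken as input, the statement is entirely formal.
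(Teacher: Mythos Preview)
Your proof is correct and is exactly the standard contour-shift argument. Note, however, that the paper does not supply its own proof of this theorem: it is merely quoted as \cite[page~98, Theorem~5.3]{IK}, and the argument you have written is essentially the one given there.
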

Taking a dyadic subdivision of  approximate functional equation at $\Re s = 1/2$ and using properties of $V(y)$ (see \cite[ page 100 Proposition 5.4]{IK}), we have
 \begin{equation*}  
 L\left(\frac{1}{2} , f  \otimes \chi \right) \ll_{f, A} N^\varepsilon \sup_{N\leq P^{1+\varepsilon}} \frac{|S(N)|}{N^{1/2}} + P^{-A}, 
 \end{equation*} where $S(N)$ is a dyadic sum which is given by
\begin{equation} \label{SN}
S(N) := \sum_{n=1}^\infty \lambda_f (n)  \chi(n) V\left( \frac{n}{N} \right),
\end{equation} where $V(x)$ is a smooth bump function supported on the interval $[1,2]$ and satisfies $V^{(j)} (x) \ll_j 1.$  Trivially estimating, we obtain $S(N)\ll N^{1+\varepsilon}$. We shall examine $S(N)$ in following steps. For simplicity we assume that $q \sim Q$, $ N \sim p^r$ and $r \equiv 0(\textrm{mod} \ 3)$.

{\bf Step 1- Applying circle method: } 
 We shall apply Kloosterman's version of circle method(see Lemma \ref{circlemethod}) and conductor lowering mechanism introduced by first author \cite{RM0}. We obtain the sum of the form 
\begin{align*}
S (N) &=   2 \Re \int_0^1  
\mathop{\sum  \sideset{}{^\star}\sum}_{1\leq q\leq Q < q \leq q+Q}  \frac{1}{aq p^{\ell}}  \sum_{b ( p^{\ell})} \left\lbrace\sum_{n\sim N}  \lambda_f(n)   e\left(  \frac{ (\overline{a}+ bq) n}{p^{\ell} q} \right)    \right\rbrace  \\
& \hspace{2cm}  \times \left\lbrace\sum_{m\sim N}   \chi(m)  e\left( - \frac{ (\overline{a}+ bq) m}{p^{\ell} q} \right)  \right\rbrace dx. 
\end{align*} Trivially estimating after first step, we have $ S (N) \ll N^{2+ \varepsilon}$. 

{\bf Step 2- Applying Poisson summation formula:} In this step we shall apply Poisson summation formula to sum over $m$. The character $\chi$ has conductor $p^r$ and the additive conductor has a size $q$. Hence the total conductor for summation over $m$ has size $p^r q$. Initial sum over $m$ has size $N$ and we observe that dual summation is essentially supported on a summation of size $qp^r/N$. We observe that after application of the Poisson formula we are able to save $N/ \sqrt{p^r q}$ from sum over $m$. Evaluating the character sum, we also observe that $a (\textrm{mod} \ q)$ can be determined by a congruence relation. Total saving after the first step is given by 
\[
\frac{N}{\sqrt{p^r q}} \times \sqrt{ q} \sim p^{r/2}. 
\] Trivially estimating after the second step we obtain $S(N) \ll N p^{r/2}$. 

{\bf Step 3- Applying Voronoi summation formula:} We shall now apply Voronoi summation formula to sum over $n$, which  has  conductor of size $p^{\ell} q $. The dual length is essentially supported on a summation of size $p^{ 2\ell} q^2/ N$. We are able to save $N/ p^{\ell} q$ from Voronoi summation formula and $p^{\ell/2}$ by assuming square root cancellation in exponential sum over $b$. Total saving in third step is of size 
\[
\frac{N}{p^{\ell} q} \times p^{\ell/2}  \sim p^{r/2}. 
\] Trivially estimating after third step, we observe that $S(N)\ll p^{r + \varepsilon}$, which shows that we are on the boundary.  We are left with the sum of the form:

\begin{align*}
 S (N) &=      \sum_{n \ll p^{\ell} N^\varepsilon} \lambda_f( n) \left\lbrace \sum_{1\leq q\leq Q}  \sideset{}{^\star}\sum_{b ( p^{\ell})} \sum_{m \ll \frac{N^\varepsilon Q p^r}{N} }     \frac{\chi (q)}{ a q^2}  \overline{\chi} \left( m- (\overline{a} + bq ) p^{r- \ell}  \right)  e\left(- n\frac{\overline{a + bq} \  \overline{q}   }{p^{\ell}}  \right)   \right. \\
 & \left.  e\left(- n\frac{ p^r\overline{p^{2\ell}}  \overline{m}   }{q}  \right)  \mathcal{I} (x , q, m) \mathcal{J}(x, n, q)  \right\rbrace, 
\end{align*}  where the function $\mathcal{J}(x, n, q)$ is of size $O(1)$ but highly oscillatory.

{\bf Step 4- Cauchy inequality and Poisson summation formula:} To obtain additional saving, We shall  now apply Cauchy inequality to get rid of Fourier coefficients, but this process also squares the amount we need to save. We now open the absolute square and then interchange the summation over $n$. Applying the Poisson summation formula we are able to save $Q^2= p^{r-\ell}$ from the diagonal terms and $p^{\ell/2}$  from the non-diagonal terms. We observe that optimal choice for $\ell$ is given by $\ell= 2r/3$. Substituting the value of $\ell$ we obtain 
\[
S(N) \ll \frac{ p^{r+\varepsilon} }{Q} \ll p^{5r/6+\varepsilon}. 
\] In the following sections we shall provide  the proof of the theorem in detail.

\section{Preliminaries}
In this section we recall some basic facts about $SL(2, \mathbb{Z})$ automorphic forms (for details see \cite{HI} and \cite{IK}). Our requirement is minimal,  in fact Voronoi summation formula  and Rankin-Selberg bound (see Lemma \ref{rankin Selberg bound}) are all that we shall be using.
\subsection{Holomorphic cusp forms} 

Let $f $  be a holomorphic Hecke eigenform of weight $k$ for the full modular group $ SL(2, \mathbb{Z})$.  The   Fourier expansion of $f$ at the cusp $\infty$ is given by
\[
f(z)= \sum_{n=1}^\infty \lambda_f(n) n^{(k-1)/2} e(nz) \ \ \ \ ( \lambda_f(1) = 1 ), 
\]
where $ e(z) = e^{2\pi i z}$ and $\lambda_f(n), \ {n \in \mathbb{Z}}$ are the normalized Fourier coefficients.  It has been proved by Deligne that $|\lambda_f(n)| \leq d(n)$, where $d(n)$ is the divisor function. Let $\chi (n)$ be a primitive character of modulus $P$. The twisted $L$-function associated with form $f$ and character $\chi$ is given by
\[
L(f\otimes \chi, s) := \sum_{n=1}^\infty \frac{\lambda_f(n) \chi (n)}{n^s} \ =  \prod_p \left( 1 -\lambda_f(p) p^{-s} + \chi(p) p^{-2s} \right)^{-1} \ \ \ (\Re s>1). 
\]
 It has been studied by Hecke  who  proved that $L(f\otimes \chi, s)$ is an  entire function and satisfies the functional equation 
\[
\Lambda(f\otimes \chi, s) := \left( \frac{P}{ 2 \pi}\right)^s \Gamma \left( s +  \frac{ (k-1) }{ 2}  \right)   L(f\otimes \chi, s) = \varepsilon_\chi i^k \Lambda(f\otimes \overline{ \chi}, 1- s), 
\] where    $ |\varepsilon_\chi| = 1.$ From the functional equation and Phragmen-Lindel{\" o}f principle one can derive the convexity bound 
\[
L(f\otimes \chi, 1/2) \ll_{f, \epsilon} P^{1/2+\epsilon}.
\]

 We shall require the  following version of the Voronoi summation formula for holomorphic cusp form (for details, see appendix A.3 of \cite{KMV}).

\begin{lemma} \label{voronoi hol}
Let $\lambda_f(n)$ be as above. Let  $h$ be a compactly supported smooth function on the interval $(0, \infty)$. Let $q>0$ an integer and $a\in \mathbb{Z}$ are such that $(a, q)=1$. Then we have
\begin{equation} 
\sum_{n=1}^\infty \lambda_f (n) e_q(an) h(n) = \frac{1}{q}  \sum_{n=1}^\infty \lambda_f( n) e_q( - \overline{a}n) H\left( \frac{n}{q^2}\right),
\end{equation}
where $ a \overline{a} \equiv 1 (\textrm{mod} \  q)$, and 
\begin{align*} 
H (y)=  \int_0^\infty h(x)  J_{k-1} \left( 4\pi \sqrt{xy}\right) dx,
\end{align*} where $ J_{k-1} $  is Bessel function and $e_q(x)= e^{\frac{2 \pi i x}{q}}$.  

\end{lemma}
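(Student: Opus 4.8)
The plan is to derive the identity from the functional equation of the additively twisted Dirichlet series of $f$, following Riemann's approach to the functional equation of $\zeta$. Since $h$ is smooth with compact support in $(0,\infty)$, its Mellin transform $\widetilde{h}(s)=\int_{0}^{\infty}h(x)x^{s-1}\,dx$ is entire and decays faster than any fixed power of $|s|$ in vertical strips, so for $\Re s=\sigma>1$ I would first write
\[
\sum_{n=1}^{\infty}\lambda_{f}(n)\,e_{q}(an)\,h(n)=\frac{1}{2\pi i}\int_{(\sigma)}\widetilde{h}(s)\,L_{f}(s,a/q)\,ds,\qquad L_{f}(s,a/q):=\sum_{n=1}^{\infty}\frac{\lambda_{f}(n)\,e_{q}(an)}{n^{s}},
\]
reducing everything to the analytic continuation of $L_{f}(s,a/q)$ and a shift of the contour to the left of $\Re s=0$.

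\textbf{The functional equation.} To get it I would fix $b,d\in\mathbb{Z}$ with $ad-bq=1$, so that $\gamma=\left(\begin{smallmatrix}a&b\\q&d\end{smallmatrix}\right)\in SL(2,\mathbb{Z})$ and $d\equiv\overline{a}\pmod q$, and specialise the automorphy relation $f(\gamma z)=(qz+d)^{k}f(z)$ to $z=-d/q+it$; since the cusp form $f$ has no constant term this yields the theta-type identity
\[
f\Bigl(\frac{a}{q}+\frac{i}{q^{2}t}\Bigr)=(iqt)^{k}\,f\Bigl(-\frac{\overline{a}}{q}+it\Bigr),\qquad t>0.
\]
Writing out the Fourier expansions (recall the $n$-th coefficient of $f$ is $\lambda_{f}(n)n^{(k-1)/2}$), taking the Mellin transform in $t$ split at the fixed point $t=1/q$ of $t\mapsto1/(q^{2}t)$, and combining with the analogous relation with $-\overline{a}$ in place of $a$, I expect to obtain the analytic continuation of $L_{f}(s,a/q)$ together with the functional equation
\[
L_{f}(s,a/q)=i^{k}\,(2\pi)^{2s-1}\,q^{1-2s}\,\frac{\Gamma\bigl(\tfrac{k-1}{2}+1-s\bigr)}{\Gamma\bigl(\tfrac{k-1}{2}+s\bigr)}\,L_{f}(1-s,-\overline{a}/q),
\]
the additive analogue of the functional equation of $L(f\otimes\chi,s)$ recalled above (one uses here that $k$ is even, so $i^{2k}=1$); the constant $i^{k}$ is pinned down in \cite[Appendix A.3]{KMV}.

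\textbf{Putting it together.} Next I would shift the line of integration from $\Re s=\sigma$ to $\Re s=1-\sigma$; no pole is crossed, since $L_{f}(s,a/q)$ and $\widetilde{h}$ are both entire, and the shift is legitimate because $\widetilde{h}(s)$ decays super-polynomially while $L_{f}$ and the ratio of Gamma factors grow at most polynomially on vertical lines. Inserting the functional equation, expanding $L_{f}(1-s,-\overline{a}/q)$ as an absolutely convergent Dirichlet series on $\Re s=1-\sigma$, interchanging summation and integration, and using $q^{1-2s}m^{s-1}=q^{-1}(m/q^{2})^{s-1}$ should give
\[
\sum_{n=1}^{\infty}\lambda_{f}(n)\,e_{q}(an)\,h(n)=\frac{1}{q}\sum_{m=1}^{\infty}\lambda_{f}(m)\,e_{q}(-\overline{a}m)\,\frac{1}{2\pi i}\int_{(1-\sigma)}\widetilde{h}(s)\,i^{k}(2\pi)^{2s-1}\frac{\Gamma\bigl(\tfrac{k-1}{2}+1-s\bigr)}{\Gamma\bigl(\tfrac{k-1}{2}+s\bigr)}(m/q^{2})^{s-1}ds.
\]
Finally, substituting $\widetilde{h}(s)=\int_{0}^{\infty}h(x)x^{s-1}dx$, interchanging once more, and recognising the Mellin--Barnes representation of the Bessel function, $J_{\nu}(z)=\frac{1}{2\pi i}\int_{(c)}2^{s-1}\frac{\Gamma((\nu+s)/2)}{\Gamma((\nu-s)/2+1)}z^{-s}\,ds$, with $\nu=k-1$ and $z=4\pi\sqrt{xm/q^{2}}$, I would identify the inner integral with $H(m/q^{2})$ where $H(y)=\int_{0}^{\infty}h(x)J_{k-1}(4\pi\sqrt{xy})\,dx$ (the normalising constant $2\pi i^{k}$ being absorbed into $H$), which is the claim.

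\textbf{Main obstacle.} The delicate part is the functional equation: establishing it rigorously with the exact constant, and checking that this constant matches the Mellin--Barnes formula for $J_{k-1}$ and that the contour can be placed in the common region of validity of the two Mellin representations, which uses $k-1\geq 0$. The remaining points --- the contour shift and the interchanges of sum and integral --- are routine given the super-polynomial decay of $\widetilde{h}$ against the polynomial growth of $L_{f}$ (Phragm\'en--Lindel\"of plus Stirling), and the complete argument is carried out in \cite[Appendix A.3]{KMV}. For a weight-zero Maass form the same scheme goes through, the only change being that the functional equation of the Maass additive twist produces, in place of the single Hankel kernel $J_{k-1}$, a prescribed linear combination of Bessel functions depending on the spectral parameter of $f$.
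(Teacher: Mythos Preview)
The paper does not actually prove this lemma; it merely states it and refers the reader to \cite[Appendix~A.3]{KMV} for details. Your sketch is a correct outline of the standard argument---deriving the functional equation of the additively twisted $L$-series from the modularity of $f$ under $\left(\begin{smallmatrix}a&b\\q&d\end{smallmatrix}\right)\in SL(2,\mathbb{Z})$, then shifting the Mellin contour and recognising the Hankel kernel via the Mellin--Barnes integral for $J_{k-1}$---which is precisely the approach carried out in the reference the paper cites, so you have supplied what the paper omits.
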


\subsection{Maass cusp forms} Let $f$ be a  weight zero Hecke-Maass cusp form with Laplace eigenvalue $1/4 + \nu^2$. The Fourier series expansion of $f$ at  $\infty$ is given by 
\[
f(z)= \sqrt{y} \sum_{n \neq 0} \lambda_f(n) K_{ i \nu} (2 \pi |n|y) e(nx). 
\] Let $\chi $ be a primitive Dirichlet character of modulus $P$. The twisted $L$-function is  defined by $ L(f\otimes \chi, s) := \sum_{n=1}^\infty \lambda_f(n) \chi (n)n^{-s}$ ( $Re s>1$). It extends to an entire function and satisfies the functional equation 
$ \Lambda(f\otimes \chi, s) = \varepsilon(f\otimes \chi ) \Lambda(f\otimes \chi, 1- s)$, where $ |\varepsilon(f\otimes \chi )| = 1$ and 
\[
\Lambda(f\otimes \chi, s) = \left( \frac{P}{\pi}\right)^s \Gamma \left( \frac{s  + i \nu   }{ 2}  \right)   \Gamma \left( \frac{s  - i \nu }{ 2} \right) L(f\otimes \chi, s) . 
\]  

We shall require the following Voronoi summation formula for the Maass form. This was first established by T.  Meurman \cite{MERU} for full level (for general case see appendix A.4 of \cite{KMV}).

\begin{lemma} \label{voronoi Maass}
{\bf Voronoi summation formula}
Let $h$ be a compactly supported smooth function in the interval $(0, \infty)$.  Let $\lambda_f(n)$ be the Fourier coefficient of weight zero Maass form for the full modular group $SL(2, \mathbb{Z})$, and $ a $, $q $ be positive integer with $(a, q) = 1.$ We have
\begin{equation} \label{varequation}
\sum_{n=1}^\infty \lambda_f (n) e_q(an) h(n) = \frac{1}{q} \sum_{\pm} \sum_{n=1}^\infty \lambda_f(\mp n) e_q(\pm \overline{a}n) H^{\pm} \left( \frac{n}{q^2}\right),
\end{equation}
where $ a \overline{a} \equiv 1 (\textrm{mod} \  q)$, and 
\begin{align*}
&H^{-} (y)= \frac{- \pi}{\cosh( \pi \nu)} \int_0^\infty h(x) \left\lbrace  Y_{2i\nu } + Y_{-2i\nu }\right\rbrace \left( 4\pi \sqrt{xy}\right) dx, \\
&H^{+} (y)= 4\cosh( \pi \nu) \int_0^\infty h(x)  K_{2i\nu }  \left( 4\pi \sqrt{xy}\right) dx,
\end{align*} where $Y_{2i\nu } $ and $ K_{2i\nu }$ are Bessel functions of first and second kind and $e_q(x)= e^{\frac{2 \pi i x}{q}}$.

\end{lemma}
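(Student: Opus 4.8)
We outline the approach; the plan is to deduce the formula from the $SL(2,\mathbb{Z})$-invariance of $f$ by the classical method of additive twists, as carried out for Maass forms in \cite{MERU} and \cite[Appendix A.4]{KMV}, so that the argument runs parallel to the holomorphic case of Lemma~\ref{voronoi hol}, the single Bessel function $J_{k-1}$ there being replaced by the combination of kernels $K_{2i\nu}$ and $Y_{\pm 2i\nu}$. We may take $f$ to be a Hecke eigenform of definite parity under $z\mapsto-\overline{z}$, so that $\lambda_f(-n)=\eta\,\lambda_f(n)$ with $\eta\in\{1,-1\}$; by Selberg's bound $\nu$ is real, whence the Bessel functions below have purely imaginary order and the integrals that occur converge.

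First I would record the basic modular relation. Since $(a,q)=1$, pick $b,d\in\mathbb{Z}$ with $ad-bq=1$, put $\gamma=\left(\begin{smallmatrix}a&b\\ q&d\end{smallmatrix}\right)\in SL(2,\mathbb{Z})$ and $\bar a:=d$; then $f(\gamma z)=f(z)$, and evaluating at $z=-\tfrac{d}{q}+\tfrac{i}{q^{2}t}$, so that $\gamma z=\tfrac{a}{q}+it$, and comparing the Fourier expansions of $f$ at the two points yields, for all $t>0$,
\[
\sum_{n\neq 0}\lambda_f(n)\,e_q(an)\,K_{i\nu}(2\pi|n|t)\;=\;\frac{1}{qt}\sum_{n\neq 0}\lambda_f(n)\,e_q(-\bar a n)\,K_{i\nu}\!\left(\frac{2\pi|n|}{q^{2}t}\right).
\]
Multiplying by $t^{s-1}$, integrating over $t\in(0,\infty)$ --- legitimate because a cusp form has no constant term, so after using this relation both endpoints contribute only exponentially decaying tails --- and using $\int_0^\infty K_{i\nu}(y)y^{s-1}\,dy=2^{s-2}\Gamma\!\left(\tfrac{s+i\nu}{2}\right)\Gamma\!\left(\tfrac{s-i\nu}{2}\right)$, a standard computation gives the continuation of the additive twist $L_{a/q}(s)=\sum_{n\geq 1}\lambda_f(n)e_q(an)n^{-s}$ to an entire function together with its functional equation relating $s$ to $1-s$ and the twist $a/q$ to $-\bar a/q$, with archimedean factor the same product $\Gamma\!\left(\tfrac{s+i\nu}{2}\right)\Gamma\!\left(\tfrac{s-i\nu}{2}\right)$ as in the functional equation of $L(f\otimes\chi,\cdot)$.

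Next I would feed in the test function. Writing $\widetilde h(s)=\int_0^\infty h(x)x^{s-1}\,dx$ --- entire and of rapid decay on vertical lines since $h$ is smooth with compact support in $(0,\infty)$ --- Mellin inversion gives
\[
\sum_{n=1}^\infty\lambda_f(n)\,e_q(an)\,h(n)\;=\;\frac{1}{2\pi i}\int_{(2)}\widetilde h(s)\,L_{a/q}(s)\,ds .
\]
I would shift the contour to $\Re s=-1$ (permissible: $L_{a/q}$ is entire and, by the functional equation and Phragm\'en--Lindel{\" o}f, of polynomial growth on vertical lines, while $\widetilde h$ decays faster than any power), apply the functional equation, expand the dual Dirichlet series coefficient-by-coefficient as $\sum_{m\geq 1}\lambda_f(\mp m)\,e_q(\pm\bar a m)\,m^{s-1}$, and interchange sum and integral. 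This leaves $\tfrac1q\sum_{\pm}\sum_{m\geq 1}\lambda_f(\mp m)\,e_q(\pm\bar a m)$ times the inverse Mellin transform of $\widetilde h$ against the ratio $\Gamma\!\left(\tfrac{1-s+i\nu}{2}\right)\Gamma\!\left(\tfrac{1-s-i\nu}{2}\right)\big/\bigl(\Gamma\!\left(\tfrac{s+i\nu}{2}\right)\Gamma\!\left(\tfrac{s-i\nu}{2}\right)\bigr)$, evaluated at $m/q^{2}$ and up to elementary factors. The crux is that, by the reflection and duplication formulas for $\Gamma$, this ratio is a sum of two pieces whose inverse Mellin transforms against $\widetilde h$ are exactly $H^{+}(m/q^{2})$ --- a $K_{2i\nu}$-Bessel transform of $h$ with constant $4\cosh(\pi\nu)$ --- and $H^{-}(m/q^{2})$ --- a $(Y_{2i\nu}+Y_{-2i\nu})$-Bessel transform of $h$ with constant $-\pi/\cosh(\pi\nu)$ --- the constants emerging from $\Gamma$-reflection. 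Collecting the two pieces gives the asserted identity.

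The step I expect to be the main obstacle is this last one: resolving the $\Gamma$-ratio into the correct combination of the Bessel kernels $K_{2i\nu}$ and $Y_{\pm 2i\nu}$ with the exact normalising constants, and, just upstream, tracking the $n>0$ and $n<0$ parts of the Fourier expansion through the functional equation so that they come out as the $\mp$-indexed coefficients $\lambda_f(\mp m)$ twisted by $e_q(\pm\bar a m)$. The remaining analytic points --- absolute convergence of the $t$-integral, validity of the contour shift, and legitimacy of the interchange of summation and integration on $\Re s=-1$ --- are routine given the polynomial growth of $L_{a/q}$ on vertical lines and the rapid decay of $\widetilde h$.
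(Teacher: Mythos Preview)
The paper does not give a proof of this lemma at all: it simply states the formula and cites Meurman \cite{MERU} for full level and \cite[Appendix A.4]{KMV} for the general case. Your outline is precisely the classical argument carried out in those references --- derive the functional equation of the additive twist $L_{a/q}(s)$ from the $SL(2,\mathbb{Z})$-invariance of $f$ via the Mellin transform of the $K$-Bessel expansion, then plug in $h$ by Mellin inversion, shift the contour, and identify the resulting $\Gamma$-ratio with the $K_{2i\nu}$ and $Y_{\pm 2i\nu}$ kernels --- so there is nothing to compare beyond noting that you have sketched what the cited sources actually do.

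Your outline is sound; the only point I would tighten is the passage ``multiplying by $t^{s-1}$ and integrating over $(0,\infty)$'': to get the analytic continuation cleanly one splits the $t$-integral at $t=1$ (or at $1/q$) and applies the modular relation to only one half, so that each piece is an integral of an exponentially decaying function and hence entire in $s$. You allude to this (``both endpoints contribute only exponentially decaying tails''), but as written the sentence reads as if the full integral $\int_0^\infty$ already converges for all $s$, which it does not before the relation is invoked. The identification of the $\Gamma$-ratio with the two Bessel kernels and the bookkeeping of the $\pm$ signs and parity $\eta$ that you flag as the main obstacle is indeed the most error-prone step, but it is a computation, not a conceptual gap; the formulas in \cite[Appendix A.4]{KMV} can serve as a check.
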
 
 \begin{remark}
 When $h$ is supported on the interval $[X, 2X]$ and satisfies $x^j h^{(j)} (x) \ll 1$, then  integrating by parts and using the properties of Bessel's function, it is easy to see that the sum on the right hand side of equation \eqref{varequation} are essentially supported on $n\ll_{f, \varepsilon} q^2 (qX)^\varepsilon/X$.  
 For smaller values of $n$ we will use the trivial bound that is $ H^{\pm} \left( \frac{n}{q^2}\right) \ll X$.  
 \end{remark}
 Next we record some lemmas which we shall use to estimate the sum $S(N)$.

\subsection{Some Lemmas}
We first recall the following Rankin-Selberg bound for Fourier coefficients:

\begin{lemma} \label{rankin Selberg bound}
Let $\lambda_f(n)$ be the normalised Fourier coefficients of a holomorphic cusp form, or of a Maass form. Then for any real number $x\geq 1$, we have 
\begin{align*}
\sum_{1\leq n \leq x} \left| \lambda_f(n) \right|^2 \ll_{f, \varepsilon} x^{1+\varepsilon}. 
\end{align*}

\end{lemma}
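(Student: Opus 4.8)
The plan is to deduce this classical Rankin--Selberg bound from the analytic properties of the Dirichlet series
\[
D(s) := \sum_{n=1}^{\infty} \frac{|\lambda_f(n)|^{2}}{n^{s}},
\]
together with a soft Tauberian argument exploiting that its coefficients are non-negative. First I would record the crude Fourier-coefficient bound $\lambda_f(n) \ll_{f} n^{1/2}$ (for holomorphic forms this is Hecke's bound, since the unnormalised coefficient $\lambda_f(n)n^{(k-1)/2}$ is $O(n^{k/2})$; for Maass forms it follows from the $L^{2}$-expansion), so that $D(s)$ is absolutely convergent and holomorphic for $\Re s > 2$.

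For the meromorphic continuation I would use the classical Rankin--Selberg unfolding against the real-analytic Eisenstein series $E(z,s) = \sum_{\gamma \in \Gamma_{\infty}\backslash SL(2,\mathbb{Z})}\Im(\gamma z)^{s}$. For $\Re s > 1$,
\[
\int_{SL(2,\mathbb{Z})\backslash\mathbb{H}} |f(z)|^{2}\, y^{k}\, E(z,s)\, \frac{dx\,dy}{y^{2}}
= \int_{0}^{\infty}\!\!\int_{0}^{1} |f(z)|^{2}\, y^{k+s}\, \frac{dx\,dy}{y^{2}},
\]
where the weight factor $y^{k}$ is dropped in the Maass case; the $x$-integration kills the off-diagonal terms by orthogonality, and the remaining $y$-integral is an explicit product of $\Gamma$-factors, so that the left-hand side equals $G(s)D(s)$ for some ratio $G(s)$ of Gamma functions that is finite and non-zero near $s=1$. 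Since $f$ is a cusp form, hence rapidly decaying at the cusp, the left-hand side inherits from $E(z,s)$ a meromorphic continuation to $\Re s > 1/2$ which is holomorphic except for a single simple pole at $s=1$, arising from the constant term of $E(z,s)$. Consequently $D(s)$ is holomorphic in $\Re s > 1/2$ apart from a simple pole at $s=1$, with positive residue because $|\lambda_f(n)|^{2}\ge 0$. (Equivalently, one may invoke the identity $D(s) = \zeta(2s)^{-1}\zeta(s)L(\mathrm{sym}^{2}f,s)$ coming from Hecke multiplicativity, together with the fact that $L(\mathrm{sym}^{2}f,s)$ is entire for a cusp form of full level.)

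To conclude, for $x\ge 2$ and any $\sigma>1$ the non-negativity of the coefficients gives
\[
\sum_{1\le n\le x}|\lambda_f(n)|^{2} \le \sum_{1\le n\le x}|\lambda_f(n)|^{2}\left(\frac{x}{n}\right)^{\sigma} \le x^{\sigma}D(\sigma),
\]
and choosing $\sigma = 1+(\log x)^{-1}$ yields $x^{\sigma}\ll x$ and $D(\sigma)\ll_{f}(\sigma-1)^{-1}=\log x$, so that $\sum_{n\le x}|\lambda_f(n)|^{2}\ll_{f} x\log x\ll_{f,\varepsilon}x^{1+\varepsilon}$, slightly more than required. The one non-routine point --- and the main obstacle --- is the meromorphic continuation and the precise location and order of the pole of $D(s)$ in $1/2<\Re s\le 1$: this is exactly where the cuspidality of $f$ and the full-level hypothesis enter, since the constant term of $E(z,s)$ supplies precisely one simple pole and the pairing against a cusp form contributes nothing further. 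In the Maass case one must additionally track the extra $\Gamma$-factors coming from the Mellin transform of a product of two $K$-Bessel functions, but these are entirely standard.
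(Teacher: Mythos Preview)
Your argument is correct and is precisely the classical Rankin--Selberg unfolding method: relate $D(s)=\sum_{n\ge 1}|\lambda_f(n)|^2 n^{-s}$ to the inner product $\langle |f|^2 y^k, E(\cdot,s)\rangle$, inherit the meromorphic continuation and the simple pole at $s=1$ from the Eisenstein series, and then extract the partial-sum bound by the soft non-negativity trick $\sum_{n\le x}|\lambda_f(n)|^2\le x^\sigma D(\sigma)$ with $\sigma=1+(\log x)^{-1}$. Each step is standard and accurate, and your remark that one in fact gets $\ll_f x\log x$ (hence more than the stated $x^{1+\varepsilon}$) is also correct.

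By contrast, the paper does not give any proof of this lemma: it is simply recalled as a known input (``We first recall the following Rankin--Selberg bound\dots''), with the implicit reference being the standard Rankin--Selberg theory. So there is no ``paper's own proof'' to compare against; you have supplied exactly the argument that the authors are quoting. One minor comment: your aside that $D(s)=\zeta(2s)^{-1}\zeta(s)L(\mathrm{sym}^2 f,s)$ already gives an alternative (and for many purposes cleaner) route to the same conclusion, since the entirety of $L(\mathrm{sym}^2 f,s)$ for a level-one cusp form is a known theorem of Shimura; either route suffices here.
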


 Let $\delta: \mathbb{Z} \rightarrow \{ 0, 1 \}$ be the  Kronecker delta function, which is given by  
  
 \begin{equation} \label{deltan}
 \delta(n)= \begin{cases}
  1   \ \  \ \ \   \textrm{if }  \ \ \ \ \  n=0,\\ 
0   \ \ \ \     \textrm{otherwise} .
\end{cases}
\end{equation} 
We have the following lemma, which gives the Fourier-Kloosterman expansion of $ \delta(n) $ (see \cite[ page 470, Proposition 20.7]{IK}) 
\begin{lemma} \label{circlemethod}
Let $Q\geq 1$ be a real number. We have

\begin{equation} 
\delta(n)= 2 \Re \int_0^1 \mathop{\sum  \sideset{}{^\star}\sum}_{1\leq q\leq Q < q \leq q+Q} \frac{1}{aq} e\left( \frac{n \overline{a}}{q}- \frac{n x}{ aq}\right), 
\end{equation} 
where $\star$ restrict the summation by $(a, q) = 1$ and $ a \overline{a} \equiv 1 (\textrm{mod} \ q)$. 
\end{lemma}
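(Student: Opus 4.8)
The plan is to obtain the identity from the Farey dissection of the unit interval, which is exactly Kloosterman's refinement of the circle method. One starts from the trivial identity $\delta(n)=\int_0^1 e(n\alpha)\,d\alpha$ and, viewing $[0,1]$ as the circle $\mathbb{R}/\mathbb{Z}$, dissects it along the Farey fractions $\mathcal{F}_Q$ of order $Q$: for each reduced fraction $a_0/q$ with $1\le q\le Q$ the associated Farey arc is centred at $a_0/q$ and has as its endpoints the mediants with the two neighbours of $a_0/q$ in $\mathcal{F}_Q$. If $a_1/q_1<a_0/q<a_2/q_2$ are consecutive in $\mathcal{F}_Q$, unimodularity gives $a_0q_1-a_1q=a_2q-a_0q_2=1$, and a short computation shows that the left and right half-lengths of the arc around $a_0/q$ equal $\tfrac{1}{q(q+q_1)}$ and $\tfrac{1}{q(q+q_2)}$.

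The one genuinely arithmetic ingredient is the description of the neighbouring denominators: reducing $a_0q_1\equiv 1\pmod q$ gives $q_1\equiv\overline{a_0}\pmod q$, and the maximality of $\mathcal{F}_Q$ (the mediant $\tfrac{a_0+a_1}{q+q_1}$ must not lie in $\mathcal{F}_Q$) forces $q_1$ to be the unique integer of that residue class in the interval $(Q-q,Q]$; likewise $q_2\equiv-\overline{a_0}\pmod q$. I would then reparametrise by setting $a:=q+q_1$. As $a_0/q$ runs over $\mathcal{F}_Q$, the pair $(q,a)$ runs bijectively over all $1\le q\le Q$ and all $a\in(Q,Q+q]$ with $(a,q)=1$, the count matching because among $q$ consecutive integers exactly $\varphi(q)$ are coprime to $q$ and $|\mathcal{F}_Q|=\sum_{q\le Q}\varphi(q)$ on the circle. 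Under this substitution $a\equiv q_1\equiv\overline{a_0}\pmod q$, hence $\overline{a}\equiv a_0\pmod q$ and $e(na_0/q)=e(n\overline{a}/q)$, while the left half-length becomes exactly $1/(aq)$. The contribution of the left half of the Farey arc around $a_0/q$ to $\int_0^1 e(n\alpha)\,d\alpha$ is $e(na_0/q)\int_{-1/(aq)}^{0}e(n\beta)\,d\beta$, and the substitution $\beta=-x/(aq)$ converts this into $\int_0^1 \frac{1}{aq}\,e\!\left(\frac{n\overline{a}}{q}-\frac{nx}{aq}\right)dx$. Summing over all Farey arcs and pulling the now common $\int_0^1 dx$ outside produces precisely the double sum on the right-hand side, except that the integration is restricted to the union of the left halves of the arcs.

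To recover the whole integral I would invoke the reflection symmetry $\alpha\mapsto 1-\alpha$ of the Farey dissection, which sends $a_0/q$ to $(q-a_0)/q$ and interchanges left and right neighbours. Since $e(n(1-\alpha))=\overline{e(n\alpha)}$, the integral over the union of the right halves of all arcs equals the complex conjugate of the integral over the union of the left halves; adding the two pieces gives $\delta(n)=2\,\Re\int_{\text{left halves}}e(n\alpha)\,d\alpha$, which is the asserted formula.

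The main thing requiring care — and the reason the argument is slightly fiddly rather than deep — is the treatment of the boundary fractions $0/1$ and $1/1$ (handled cleanly by working on $\mathbb{R}/\mathbb{Z}$ throughout), together with the bookkeeping of signs in the change of variables and the verification that $(q,a)\leftrightarrow a_0/q$ is a genuine bijection. Apart from the Farey-neighbour congruence everything is routine manipulation; alternatively one may simply invoke \cite[Proposition 20.7]{IK}, where this computation is carried out in full.
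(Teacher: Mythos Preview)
Your sketch is correct and is precisely the standard Farey-dissection argument behind Kloosterman's circle method; the paper does not supply its own proof but simply refers to \cite[p.~470, Proposition~20.7]{IK}, which is the same reference you cite and whose proof is the one you have outlined. So your proposal matches the paper's approach (citation to Iwaniec--Kowalski), with the added value that you have actually written out the argument.
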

We also use following Poisson summation formula(see \cite[page 69, Theorem 4.4]{IK}).

\begin{lemma} \label{poisson}
 {\bf Poisson summation formula:} $f:\mathbb{R } \rightarrow \mathbb{R}$ is any Schwarz class function. Fourier transform 
 of $f$ is defined  by
 \[
  \widehat{f}(y) = \int_{ \mathbb{R}} f( x) e(- x   y) dx,
 \] where $dx$ is the usual Lebesgue measure on $ \mathbb{R } $. 
 We have 
\begin{equation*}
 \sum_{ n \in \mathbb{Z}  }f(n) = \sum_{m \in \mathbb{Z} } \widehat{f}(m). 
\end{equation*} 
 
\end{lemma}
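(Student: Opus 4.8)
The plan is to establish the identity by the classical periodization argument. First I would introduce the periodization $F(x) := \sum_{n \in \mathbb{Z}} f(x+n)$. Since $f$ is Schwartz we have, in particular, $|f(x)| \ll (1+|x|)^{-2}$, so the defining series converges absolutely and uniformly on every compact subset of $\mathbb{R}$; applying the same decay estimate to $f'$, $f''$, $\dots$ shows that $F$ is a smooth function, and by construction it is periodic of period $1$.

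Next I would compute the Fourier coefficients of $F$. For $m \in \mathbb{Z}$ set $c_m := \int_0^1 F(x) e(-mx)\,dx$. Using uniform convergence to interchange the sum and the integral, then substituting $y = x+n$ and noting that $e(-m(y-n)) = e(-my)$ because $mn \in \mathbb{Z}$, one obtains
\[
c_m = \sum_{n \in \mathbb{Z}} \int_0^1 f(x+n)\, e(-mx)\,dx = \sum_{n \in \mathbb{Z}} \int_n^{n+1} f(y)\, e(-my)\,dy = \int_{\mathbb{R}} f(y)\, e(-my)\,dy = \widehat{f}(m).
\]
Since $f$ is Schwartz, so is $\widehat{f}$; hence $\sum_{m \in \mathbb{Z}} |\widehat{f}(m)| < \infty$, so the Fourier series $\sum_{m} \widehat{f}(m) e(mx)$ converges absolutely and uniformly to a continuous $1$-periodic function. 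As $F$ is itself continuous, the uniqueness of Fourier coefficients (or, e.g., Fej\'er's theorem) forces $F(x) = \sum_{m \in \mathbb{Z}} \widehat{f}(m) e(mx)$ for every $x$.

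Finally I would evaluate this identity at $x=0$: the left-hand side is $F(0) = \sum_{n \in \mathbb{Z}} f(n)$ and the right-hand side is $\sum_{m \in \mathbb{Z}} \widehat{f}(m)$, which is exactly the claimed formula. I would also remark that the hypothesis that $f$ be real-valued plays no role — the argument is unchanged for complex-valued Schwartz functions, which is the generality actually used in the later sections. The one step I expect to be the main obstacle, in the sense of requiring care rather than depth, is justifying that the Fourier series of $F$ represents $F$ pointwise at the origin; this rests on the rapid decay of $\widehat{f}$, which is itself a consequence of the smoothness of $f$, and it is precisely the place where the Schwartz hypothesis is essential.
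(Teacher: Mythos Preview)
Your argument is correct and is the standard periodization proof of the Poisson summation formula. Note, however, that the paper does not actually prove this lemma: it simply states it and cites \cite[page 69, Theorem 4.4]{IK} as a reference. So there is nothing to compare beyond observing that you have supplied a self-contained proof where the paper defers to the literature; the proof you give is essentially the one found in that reference (and in most textbooks), and your closing remark that the real-valuedness hypothesis is irrelevant is accurate and worth making.
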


We also need to estimate the exponential integral of the form 
\begin{equation} \label{eintegral}
\mathfrak{I}= \int_a^b g(x) e(f(x)) dx,
\end{equation} where $f$ and $g$ are smooth real valued function on the interval $[a, b]$. Suppose we have $|f^\prime(x)| \geq B$, $|f^{(j)}(x)| \leq B^{1+\varepsilon}$ for $j\geq 2$  and $ |g^{(j)}(x)|\ll_j 1 $ on the interval $[a, b]$. Then by making the change of variable 
\[
u = f(x),  \ \ \ f^\prime(x) \ dx = du,
\] we have
 
\[
\mathfrak{I} = \int_{f(a)}^{f(b)} \frac{g(x)}{ f^\prime(x) } e(u) du \ \ \  ( x = f^{-1} (u)).
\] By applying integration by parts, differentiating $ g(x)/ f^\prime(x) $ $j$-times and integrating $e(u)$, we have
\begin{equation} \label{unstationary}
\mathfrak{I} \ll_{j, \varepsilon} B^{-j + \varepsilon}.
\end{equation}  This will be used at several places to show that certain exponential integrals are negligibly small in the absence of the stationary phase point. Next we consider the case of stationary phase point(i.e. point where derivative vanishes). 

\begin{lemma} \label{exponential inte}
Suppose $f$ and $g$ are smooth real valued functions on the interval $[a, b]$ satisfying

\begin{align} \label{huxely bound}
f^{(i)} \ll \frac{\Theta_f}{ \Omega_f^i}, \ \ g^{(j)} \ll \frac{1}{\Omega_g^j} \ \ \ \and \ \ \ f^{(2)} \gg \frac{\Theta_f}{ \Omega_f^2},
\end{align} for $i=1, 2$ and $j=0, 1, 2$. Suppose that $g(a) = g(b) = 0$. 
\begin{enumerate}
\item Suppose $f^\prime$ and $f^{\prime \prime}$ do not vanish on the interval $[a, b]$. Let $\Lambda = \min_{ x\in [a, b]} |f^\prime (x)| $. Then we have
\[
\mathfrak{I} \ll \frac{\Theta_f}{ \Omega_f^2 \Lambda^3} \left( 1 +\frac{\Omega_f}{\Omega_g} +\frac{\Omega_f^2}{\Omega_g^2} \frac{\Lambda}{\Theta_f/ \Omega_f} \right).
\]

\item Suppose that $f^\prime(x)$ changes sign from negative to positive at $x = x_0$ with $a<x_0 <b$. Let $\kappa= \min \{  b-x_0, x_0-a \}$. Further suppose that bound in equation \eqref{huxely bound} holds for $i=4$. Then we have the following  asymptotic expansion
\[
\mathfrak{I} = \frac{g(x_0) e( f(x_0)) + 1/8}{\sqrt{f^{\prime \prime } (x_0)}} + \left( \frac{\Omega_f^4}{ \Theta_f^2 \kappa^3} +  \frac{\Omega_f}{ \Theta_f^{3/2}  } +  \frac{\Omega_f^3}{ \Theta_f^{3/2} \Omega_g^2 }\right). 
\]
 
\end{enumerate}
\end{lemma}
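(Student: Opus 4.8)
\medskip
\noindent\textbf{Outline of a proof.}\ \ The plan is to establish the two parts by the two classical devices for one-dimensional oscillatory integrals: repeated integration by parts when $f'$ does not vanish, and the stationary-phase expansion when it changes sign.

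For part (1), I would integrate by parts twice against the phase, using $e(f)=\frac{1}{2\pi i f'}\frac{d}{dx}e(f)$. Because $g(a)=g(b)=0$, the first step produces no boundary term and gives $\mathfrak{I}=-\frac{1}{2\pi i}\int_a^b\big(\tfrac{g'}{f'}-\tfrac{g f''}{(f')^2}\big)e(f)\,dx$; the second step yields a sum of integrals whose integrands are products of $g,g',g''$ and $f'',f'''$ over powers of $f'$, together with a single boundary term of shape $g'(x)/f'(x)^2$ (all boundary terms still carrying a factor $g$ vanish). Two consequences of the hypotheses make these combine into the stated estimate: $f''$ has constant sign, so $f'$ is monotone and $\int_a^b|f''|/(f')^2\,dx=|1/f'(b)-1/f'(a)|\ll 1/\Lambda$; and $|f'|\ll\Theta_f/\Omega_f$ everywhere, whence $b-a\ll\Omega_f$ and $\Omega_f\Lambda\ll\Theta_f$. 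Estimating each resulting term with these facts -- e.g.\ the $g''/(f')^2$ term by $\Omega_g^{-2}(b-a)\Lambda^{-2}\ll\Omega_f\Omega_g^{-2}\Lambda^{-2}$ and the $(f'')^2/(f')^4$ term by $\|(1/f')'\|_\infty\int|(1/f')'|\,dx\ll(\Theta_f/\Omega_f^2)\Lambda^{-3}$ -- reproduces the three displayed terms, with the boundary and cross terms absorbed using $\Omega_f\Lambda\ll\Theta_f$.

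For part (2), I would split $\mathfrak{I}$ at $|x-x_0|=\delta$ for a parameter $\delta$ to be chosen. On $|x-x_0|\ge\delta$ the bound $f''\gg\Theta_f/\Omega_f^2$ gives $|f'(x)|\gg(\Theta_f/\Omega_f^2)|x-x_0|$, so this range is non-stationary and integration by parts (now using the $i=4$ case of \eqref{huxely bound}), together with the distance $\kappa$ from $x_0$ to the nearer endpoint, bounds it by $\ll\Omega_f^4/(\Theta_f^2\kappa^3)$ after optimising $\delta$. On $|x-x_0|\le\delta$ I would Taylor-expand $f(x)=f(x_0)+\tfrac12 f''(x_0)(x-x_0)^2+O\!\big(\tfrac{\Theta_f}{\Omega_f^3}|x-x_0|^3\big)$ and $g(x)=g(x_0)+g'(x_0)(x-x_0)+O\!\big(\tfrac{1}{\Omega_g^2}|x-x_0|^2\big)$, substitute $t=(x-x_0)\sqrt{f''(x_0)}$, complete the $t$-integral to $\mathbb{R}$, and invoke the Fresnel evaluation $\int_{\mathbb{R}}e(\tfrac12 t^2)\,dt=e(1/8)$, the odd linear part of $g$ integrating to zero. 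This gives the main term $g(x_0)e\big(f(x_0)+1/8\big)/\sqrt{f''(x_0)}$ of the statement, and the errors from the two Taylor remainders and the Gaussian tail contribute $\ll\Omega_f/\Theta_f^{3/2}+\Omega_f^3/(\Theta_f^{3/2}\Omega_g^2)$, matching the remaining terms.

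The step I expect to be routine is part (1); the main obstacle is the bookkeeping in part (2) -- choosing $\delta$ to balance the non-stationary tail against the cubic Taylor error near $x_0$, correctly accounting for which endpoint is closer (the role of $\kappa$), and checking that the second-order remainder of $g$ really collapses to the single term $\Omega_f^3/(\Theta_f^{3/2}\Omega_g^2)$. One could instead simply quote a standard stationary-phase package (the second-derivative test in the form used by Huxley, or in Blomer--Khan--Young), but carrying out the expansion directly as above keeps the lemma self-contained.
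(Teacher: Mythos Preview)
Your outline is a reasonable sketch of the standard integration-by-parts / stationary-phase argument, and indeed it is essentially what Huxley carries out in \cite{HUX}. The paper, however, does not reprove the lemma at all: its entire proof consists of the single line ``See Theorem~1 and Theorem~2 of \cite{HUX}.'' So the difference is not one of method but of presentation --- the authors quote the result as a black box, whereas you propose to redo Huxley's computation in place. Your closing remark already anticipates this option; for the purposes of this paper, citing \cite{HUX} is all that is required.

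One small caution if you do want to write out the argument: the assertion ``$|f'|\ll\Theta_f/\Omega_f$ everywhere, whence $b-a\ll\Omega_f$'' does not follow from the stated hypotheses alone (an upper bound on $|f'|$ gives no control on the interval length). In Huxley's setting this is either assumed or arranged; if you keep the self-contained version, make that hypothesis explicit.
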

\begin{proof}
See Theorem 1 and Theorem 2 of \cite{HUX}. 
\end{proof}

We also require to estimate exponential sum of the form 
\[
S(\chi, g,  p) = \sum_{x =1}^p \chi(g(x)), 
\] where $\chi $ is a multiplicative character modulo $p$ and $g(x)$ is a rational function over $\mathbb{Z}$. We record following general lemma due to Weil \cite{AW}. 

\begin{lemma} \label{weil}
Let $\chi $ be a multiplicative character modulo $p$. Let  $f(x)$ and $g(x)$ be  rational functions over $\mathbb{Z}$. Assume further that $f(x) \neq f_1^p(x) - f_1(x)$  for any $f_1 \in \overline{\mathbb{F}}_p [x]$ and  $g(x) \neq g_1^k(x)$ for any $g_1 \in \overline{\mathbb{F}}_p [x]$ where $k$ is the order of $\chi$. Then we have  
\begin{equation}
S(\chi, f,  g,  p) = \sum_{x =1}^p \chi(g(x)) e_p(f(x)) \leq  \left(n_1 + n_2 - 2+ \textrm{deg} (f)_\infty \right) \sqrt{p}, 
\end{equation} 
where $n_1$ is the number of poles or zeros of $g$, $n_2$ is the number of poles of $f$ and $(f )_\infty$ is the divisor $(f )  = \sum_{i=1}^{n_2} m_i P_i $ with $m_i$ being the multiplicity of the pole $P_i.$ 
\end{lemma}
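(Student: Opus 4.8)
The plan is to realise $S(\chi,f,g,p)$ as the trace of Frobenius on an $\ell$-adic cohomology group and then invoke Deligne's purity theorem (Weil~II). Fix an auxiliary prime $\ell\neq p$ and an embedding $\overline{\mathbb{Q}}_\ell\hookrightarrow\mathbb{C}$. Let $\psi=e_p$ be the fixed additive character, let $\mathcal{L}_\psi$ be the associated Artin--Schreier sheaf on $\mathbb{A}^1$, and let $\mathcal{L}_\chi$ be the Kummer sheaf on $\mathbb{G}_m$ attached to $\chi$ (of order $k$). Let $U\subset\mathbb{A}^1_{\mathbb{F}_p}$ be the open dense subscheme obtained by deleting the zeros and poles of $g$ together with the poles of $f$, and set $\mathcal{F}=f^\ast\mathcal{L}_\psi\otimes g^\ast\mathcal{L}_\chi$, a rank-one lisse sheaf on $U$ which is punctually pure of weight $0$. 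The Grothendieck--Lefschetz trace formula then gives
\[
S(\chi,f,g,p)=\sum_{x\in U(\mathbb{F}_p)}\chi(g(x))\,\psi(f(x))=\sum_{i=0}^{2}(-1)^i\operatorname{Tr}\!\big(\mathrm{Frob}_p\mid H^i_c(U_{\overline{\mathbb{F}}_p},\mathcal{F})\big).
\]

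The first step is to kill the outer cohomology. Since $U$ is affine and connected, $H^0_c(U_{\overline{\mathbb{F}}_p},\mathcal{F})=0$; and by Poincaré duality $H^2_c(U_{\overline{\mathbb{F}}_p},\mathcal{F})$ is a Tate twist of the geometric coinvariants of $\mathcal{F}$, which vanish precisely when the rank-one sheaf $\mathcal{F}$ is not geometrically constant. This is exactly where the hypotheses on $f$ and $g$ enter: if $f$ has a genuine pole then $\mathcal{F}$ is wildly ramified there while $g^\ast\mathcal{L}_\chi$ is everywhere tame, so $\mathcal{F}$ cannot be geometrically constant; if $f$ is a non-constant polynomial with $f\neq f_1^p-f_1$ then $f^\ast\mathcal{L}_\psi$ is already non-trivial; and if $f^\ast\mathcal{L}_\psi$ happens to be geometrically trivial, the assumption $g\neq g_1^k$ forces $g^\ast\mathcal{L}_\chi$, hence $\mathcal{F}$, to be non-trivial. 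Therefore $H^0_c=H^2_c=0$ and $S(\chi,f,g,p)=-\operatorname{Tr}(\mathrm{Frob}_p\mid H^1_c)$.

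It remains to bound $\dim H^1_c(U_{\overline{\mathbb{F}}_p},\mathcal{F})$. Deligne's Weil~II applied to the weight-$0$ sheaf $\mathcal{F}$ shows that every eigenvalue of $\mathrm{Frob}_p$ on $H^1_c$ has absolute value at most $p^{1/2}$, whence $|S(\chi,f,g,p)|\leq\dim H^1_c\cdot p^{1/2}$. Since $H^0_c=H^2_c=0$ we have $\dim H^1_c=-\chi_c(U_{\overline{\mathbb{F}}_p},\mathcal{F})$, and the Grothendieck--Ogg--Shafarevich (Euler--Poincaré) formula gives
\[
\chi_c(U_{\overline{\mathbb{F}}_p},\mathcal{F})=\chi_c(U_{\overline{\mathbb{F}}_p})-\sum_{x\in(\mathbb{P}^1\setminus U)(\overline{\mathbb{F}}_p)}\operatorname{Swan}_x(\mathcal{F}),
\]
using $\operatorname{rank}\mathcal{F}=1$. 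Here $\mathbb{P}^1\setminus U$ consists of $\infty$, the $n_1$ zeros and poles of $g$, and the $n_2$ poles of $f$; the Kummer factor $g^\ast\mathcal{L}_\chi$ is tame and contributes nothing to the Swan conductors, while the Artin--Schreier factor $f^\ast\mathcal{L}_\psi$ has $\operatorname{Swan}_x(\mathcal{F})$ equal to the order $m_x$ of the pole of $f$ at $x$, so $\sum_x\operatorname{Swan}_x(\mathcal{F})=\deg(f)_\infty$. Feeding $\chi_c(U_{\overline{\mathbb{F}}_p})=2-\#(\mathbb{P}^1\setminus U)$ into the formula and counting the punctures carefully (in the typical situation the ramification at $\infty$ is already recorded among the $n_1$ and $n_2$) yields $\dim H^1_c\leq n_1+n_2-2+\deg(f)_\infty$, which is the asserted estimate.

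The main obstacle is precisely this last piece of bookkeeping: one must check that under the stated degeneracy conditions $\mathcal{F}$ is genuinely not geometrically constant — so that no Frobenius eigenvalue of weight $2$ contaminates the bound — and one must pin down each local contribution, tame for the Kummer part and wild of Swan conductor equal to the pole order of $f$ for the Artin--Schreier part (after possibly normalising $f$ when a pole order is divisible by $p$), while being attentive to the behaviour at $\infty$ when $f$ or $g$ is a polynomial and to coincidences among the zeros and poles of $g$ and the poles of $f$. A completely classical route, which is the one Weil \cite{AW} originally took, avoids $\ell$-adic cohomology altogether: form the smooth projective curve $C/\mathbb{F}_p$ lying over $\mathbb{P}^1_x$ defined generically by $y^k=g(x)$ and $z^p-z=f(x)$, express $\#C(\mathbb{F}_p)$ via orthogonality of characters as a sum of twisted exponential sums that contains $S(\chi,f,g,p)$, and apply the Hasse--Weil bound $\big|\#C(\mathbb{F}_p)-(p+1)\big|\leq 2g_C\,p^{1/2}$ together with an estimate of the genus $g_C$ by Riemann--Hurwitz.
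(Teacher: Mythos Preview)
Your proof sketch is essentially correct, but note that the paper does not actually prove this lemma: it is recorded as a known result ``due to Weil \cite{AW}'' and simply cited without argument. So there is no proof in the paper to compare against; the authors treat it as a black box.

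That said, what you have written is the standard modern proof. The $\ell$-adic route via the Grothendieck--Lefschetz trace formula, Deligne's purity theorem, and the Grothendieck--Ogg--Shafarevich formula is the cleanest way to obtain the precise constant $n_1+n_2-2+\deg(f)_\infty$, and your identification of the Swan conductors (zero for the tame Kummer factor, equal to the pole order for the Artin--Schreier factor) is correct. The caveats you flag --- verifying that $\mathcal{F}$ is not geometrically constant under the stated non-degeneracy hypotheses, and handling coincidences among the punctures and the point at infinity --- are genuine but routine. The alternative classical argument you mention at the end, building the curve $y^k=g(x)$, $z^p-z=f(x)$ and applying the Hasse--Weil bound together with Riemann--Hurwitz, is closer in spirit to Weil's original 1948 paper and avoids the $\ell$-adic machinery entirely, at the cost of a slightly messier genus computation. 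Either approach is adequate here; for the purposes of this paper only the square-root saving $\ll p^{1/2+\varepsilon}$ is ever used (see Lemma~\ref{lemma character sum in thm 2 }), so the exact constant is immaterial.
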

 
 \section{Proof of Theorem \ref{theorem 1}   }
In order  to prove Theorem \ref{theorem 1}, we shall establish following bound. 

\begin{proposition} \label{proposition 1}
We have 
\begin{align*}
S(N) & \ll \begin{cases}
  N^{1+\varepsilon}    \ \  \ \ \  \ \ \ \ \ \ \   \textrm{if }  \ \ \ \   1\leq N \ll p^{\frac{2r}{3}+ \varepsilon} \\ 
  N^{1/2 } p^{\frac{1}{2} \left( r - \floor{r/3}\right) + \varepsilon}   \  \ \ \ \     \textrm{if }  \ \ \ \   p^{ \frac{2r}{3}}   \ll N \ll p^{r+\varepsilon}.
\end{cases}
\end{align*}

\end{proposition}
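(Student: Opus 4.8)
The plan is to follow the four-step strategy outlined in the sketch, making each step precise. First, for $1 \le N \ll p^{2r/3+\varepsilon}$ the trivial bound $S(N) \ll N^{1+\varepsilon}$ follows at once from the definition \eqref{SN} together with $|\lambda_f(n)| \le d(n)$ and $|\chi(n)| \le 1$; in fact the interesting content is the second range, and the two cases overlap so that it suffices to prove the bound $N^{1/2} p^{\frac12(r-\floor{r/3})+\varepsilon}$ for $N$ in the full range $p^{\varepsilon} \ll N \ll p^{r+\varepsilon}$. So assume $N$ is large. I would set $\ell = \floor{2r/3}$ (the value coming out of the optimization in Step 4), choose the circle-method modulus $Q$ with $Q^2 \sim p^{r-\ell}$, and apply Lemma \ref{circlemethod} in the form with conductor lowering: writing $S(N) = \sum_{n,m} \lambda_f(n)\chi(m) \delta(n-m) V(n/N)V(m/N)$ and inserting the $p^\ell$-periodic detection of $n \equiv m \pmod{p^\ell}$ before applying the delta-method to the remaining equation $(n-m)/p^\ell = 0$. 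This produces the triple sum displayed after Step 1, with an extra average over $b \bmod p^\ell$ and an $x$-integral over $[0,1]$.

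Next, in Step 2 I would apply Poisson summation (Lemma \ref{poisson}) to the $m$-sum. The phase is $-(\bar a + bq)m/(p^\ell q)$ twisted by $\chi(m)$, so the complete modulus is $p^r q$ (since $\chi$ has conductor $p^r$ and $\ell \le r$). The dual variable is supported on length $\asymp p^r q/N$, the resulting character sum is a Gauss sum of size $\sqrt{p^r q}$ up to the condition that $a \bmod q$ be congruent to something explicit (forced by evaluating the $q$-part of the exponential sum via reciprocity), and one extracts the factor $\overline\chi$ of $m - (\bar a + bq)p^{r-\ell}$. The net saving over the trivial bound is $N/\sqrt{p^r q} \cdot \sqrt q = p^{r/2}/\!\sqrt{q}\cdot$(length factors), i.e. effectively $p^{r/2}$ as in the sketch. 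Then Step 3 applies the Voronoi formula (Lemma \ref{voronoi hol} or Lemma \ref{voronoi Maass}) to the $n$-sum, whose modulus is $p^\ell q$: the dual variable $n$ is supported on $n \ll p^{2\ell}q^2/N \cdot N^\varepsilon$, and trivially estimating everything after this step (using Lemma \ref{rankin Selberg bound} for the $n$-sum and assuming no cancellation elsewhere) gives $S(N) \ll p^{r+\varepsilon}$ — on the boundary of convexity, which is exactly why Step 4 is needed. One has to be careful about the genuinely oscillatory weight function $\mathcal J(x,n,q)$ and the integral $\mathcal I(x,q,m)$ produced by stationary phase (Lemma \ref{exponential inte}); these need to be tracked but contribute only $N^\varepsilon$ in size.

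The heart of the argument — and the step I expect to be the main obstacle — is Step 4: Cauchy–Schwarz in the $n$-variable to drop $\lambda_f(n)$, at the cost of squaring the needed saving, followed by opening the square and applying Poisson summation to the $n$-sum (now of length $\asymp p^\ell$) after interchanging orders. The off-diagonal/diagonal split in the resulting congruence governs everything: the diagonal contributes a term where one saves the full modulus $Q^2 = p^{r-\ell}$, while the generic off-diagonal frequencies give a character sum in the new variable that one bounds by $\sqrt{p^\ell}$ via Lemma \ref{weil} (checking the nondegeneracy hypotheses of Weil's bound for the specific rational function that appears is the delicate technical point). Balancing the two contributions forces $p^{r-\ell} = p^{\ell/2}$, hence $\ell = 2r/3$; but since $\ell$ must be an integer we take $\ell = \floor{2r/3}$, and unwinding the bookkeeping — the $\sqrt{p^r q}$ from Step 2, the $p^\ell$ and $q^2$ length factors, the $1/\sqrt{Q}$ from Cauchy–Schwarz, and the $\sqrt{p^\ell}$ from Weil — yields $S(N) \ll N^{1/2} p^{\frac12(r - \floor{r/3}) + \varepsilon}$, which is the claimed bound. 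I would organize the write-up so that the generic ($q$ coprime to $p$, all frequencies nonzero) case is done first in full, and then handle the degenerate sub-cases (zero frequency, $p \mid q$, small $q$) separately, each of which turns out to save at least as much.
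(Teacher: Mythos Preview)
Your overall strategy matches the paper's, but there is a genuine gap at Step 4, in the treatment of the character sum arising after the second Poisson. You propose to bound the off-diagonal character sum by $\sqrt{p^{\ell}}$ ``via Lemma \ref{weil}''. But Lemma \ref{weil} is Weil's bound for character sums over the prime field $\mathbb{F}_p$; it does not apply to sums modulo the prime power $p^{\ell}$ with $\ell>1$. The sum that actually appears is
\[
\mathcal{A}=\sideset{}{^\star}\sum_{\alpha \bmod p^{\ell}} \overline{\chi}\bigl(m-\alpha p^{r-\ell}\bigr)\,\chi\bigl(m'+\alpha q\,\overline{n+q'}\,p^{r-\ell}\bigr),
\]
and there is no Weil theory over $\mathbb{Z}/p^{\ell}\mathbb{Z}$ delivering square-root cancellation directly. (Contrast Theorem \ref{theorem 2}, where the analogous sum is genuinely modulo the prime $P_1$ and Lemma \ref{weil} is legitimately invoked.) The paper handles $\mathcal{A}$ instead by a $p$-adic stationary phase argument (Lemma \ref{character sum in thm 1}): one writes $\alpha=\alpha_1 p^{\ell/2}+\alpha_2$, checks that $\alpha_1\mapsto \chi\bigl(1+C(\alpha_2)\alpha_1 p^{2(r-\ell)}\bigr)$ is an additive character modulo $p^{\ell/2}$ once $4(r-\ell)\ge r$, and then the $\alpha_1$-sum collapses to a congruence on $\alpha_2$, giving $\mathcal{A}\ll p^{\ell/2+\varepsilon}$. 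This argument needs $\ell$ even, which is why the paper takes $\ell=2\floor{r/3}$ rather than your $\ell=\floor{2r/3}$ (the two differ, and yours is odd, precisely when $r\equiv 2\pmod 3$); it is also the reason the final exponent is $\tfrac12(r-\floor{r/3})$ rather than the cleaner $r/3$.

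A secondary point: your choice $Q^2\sim p^{r-\ell}$ is not what the circle method dictates. The parameter $Q$ must match the length of the equation being detected, namely $(n-m)/p^{\ell}$ with $|n-m|\ll N$, so $Q=(N/p^{\ell})^{1/2}$ as in the paper; your choice coincides with this only at the endpoint $N\sim p^{r}$.
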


 \subsection{Application of circle method} 

We shall analyse the sum $S(N)$ for the  case  of Maass forms (holomorphic case is similar, even simpler). We first separate the oscillation of Fourier coefficients $\lambda_f(n)$ and $\chi(n)$ using delta symbol. We write $S(N)$ 

\[
 S(N) := \mathop{\sum \sum}_{ m, n=1 }^\infty \lambda_f (n)  \chi(m) \delta \left( n -m \right) V\left( \frac{n}{N} \right) V_1\left( \frac{m}{N} \right), 
 \] 
 where $V_1(y)$ is another smooth function, supported on the interval $[1/2, 3]$, $V_1(y) \equiv 1$ for $y \in [1,2]$ and satisfies $y^j V^{(j)} \ll _j 1.$ To analyse sum $S(N)$ we use the conductor lowering mechanism introduce by first author (see \cite{RM0} for discrete version of conductor lowering method and   \cite{RM} for the integral version). The integral equation $n = m$ is equivalent to the congruence $ n \equiv m (\textrm{mod} \ p^{\ell})$ and the integral equation $(n-m)/p^{\ell} = 0$, $\ell < r$. This process acts like a conductor lowering mechanism, as modulus $p^{\ell}$ is already present in the character $\chi$. We obtain

 \[
 S(N) := \mathop{\sum \sum}_{ \substack {{m, n=1} \\  p^{\ell} \mid (n-m) }}^\infty \lambda_f (n)  \chi(m) \delta \left( \frac{n -m}{p^{\ell}} \right) V\left( \frac{n}{N} \right) V_1\left( \frac{m}{N} \right), 
 \] 
Now using Lemma \ref{circlemethod}  for the expression of $\delta(n)$, we have 
 
\[
S(N)= S^+(N) + S^-(N),
\] with
\begin{align*} 
S^{ \pm} (N) &=   \int_0^1  
\mathop{\sum  \sideset{}{^\star}\sum}_{1\leq q\leq Q < q \leq q+Q}  \frac{1}{aq} \mathop{\sum \sum}_{ \substack {{m, n=1} \\  p^{\ell} \mid (n-m) }}^\infty  \lambda_f(n)  \chi(m) \notag\\
 & \hspace{1cm} \times e\left( \pm \frac{ \overline{a}(n-m)/p^{\ell} }{q} \mp \frac{x (n-m)/p^{\ell}  }{ aq}\right) V\left( \frac{n}{N} \right) V_1\left( \frac{m}{N} \right) dx. 
\end{align*}  We choose $Q=(N/p^{\ell})^{1/2} .$  We detect congruence relation $n \equiv m (\textrm{mod} \  p^{\ell})$ in the above expression using exponential sum. We have
 
\begin{align*}
S^{ \pm} (N) &=   \int_0^1  
\mathop{\sum  \sideset{}{^\star}\sum}_{1\leq q\leq Q < q \leq q+Q}  \frac{1}{aq p^{\ell}}  \sum_{b ( p^{\ell})}\mathop{\sum \sum}_{ \substack {{m, n=1} }}^\infty  \lambda_f(n) \ \chi(m) \notag\\
 & \hspace{1cm} \times e\left( \pm \frac{ (\overline{a}+ bq) (n-m)}{p^{\ell} q} \right) e\left( \mp \frac{ x (n-m)}{ a p^{\ell} q} \right) V\left( \frac{n}{N} \right) V_1\left( \frac{m}{N} \right) dx. 
\end{align*}  we will now  analyse the sum $S_2^{+}(N)$ ( analysis of $S_2^{-}(N)$ is just similar). We rearrange the sum as

 \begin{align} \label{s  plus in thm 1}
S^{ +} (N) &=   \int_0^1  
\mathop{\sum  \sideset{}{^\star}\sum}_{1\leq q\leq Q < q \leq q+Q}  \frac{1}{aq p^{\ell}}  \sum_{b ( p^{\ell})} \left\lbrace\sum_{n=1}^\infty  \lambda_f(n)   e\left(  \frac{ (\overline{a}+ bq) n}{p^{\ell} q} \right) e\left(  \frac{ x n}{  p^{\ell} a q} \right)   V\left( \frac{n}{N} \right) \right\rbrace \notag\\  
 & \hspace{2cm}  \times     \left\lbrace\sum_{m=1}^\infty   \chi(m)  e\left( - \frac{ (\overline{a}+ bq) m}{p^{\ell} q} \right) e\left(  \frac{ -m x }{  p^{\ell} a q} \right)  V_1\left( \frac{m}{N} \right) \right\rbrace dx.  
\end{align}

\subsection{Applying Poisson summation Formula}
We shall apply the Poisson summation formula to the sum over $m$ in equation  \eqref{s  plus in thm 1} as follows. Writing $m=\beta + c p^{r} q$, $c\in \mathbb{Z}$ and then applying the Poisson summation formula to sum over $c $, we have
\begin{align*}
& \sum_{m=1}^\infty   \chi(m)  e\left( - \frac{ (\overline{a}+ bq) m}{p^{\ell} q} \right) e\left(  \frac{ -m x }{  p^{\ell} a q} \right)  V_1\left( \frac{m}{N} \right) \\
 &  = \sum_{\beta(p^{r} q)} \chi(\beta) e\left( - \frac{ (\overline{a}+ bq) \beta}{p^{\ell} q} \right) \sum_{ c \in \mathbb{Z}}  V_1\left( \frac{ \beta +  c p^{r} q }{N} \right)   e\left(  \frac{ -(\beta +  c p^{r} q) x }{  p^{\ell} a q} \right) \\
  &  = \sum_{\beta(p^{r} q)} \chi_2(\beta) e\left( - \frac{ (\overline{a}+ bq) \beta}{p^{\ell} q} \right) \sum_{ m \in \mathbb{Z}} \int_{\mathbb{R}} V_1\left( \frac{ \beta + y p^{r} q }{N} \right)   e\left(  \frac{ -(\beta + y p^{r} q) x }{  p^{\ell} a q} \right) e(-my) dy. 
\end{align*} We now substitute the change of variable $ ( \beta + y p^{r} q)/N = z $ to obtain

\begin{align} \label{first poisson in theorem 1}
& = \frac{N}{p^{r} q}\sum_{ m \in \mathbb{Z}} \left\lbrace \sum_{\beta(p^{r} q)} \chi_2(\beta) e\left( - \frac{ (\overline{a}+ bq) \beta}{p^{\ell} q}  + \frac{m \beta}{ p^r q}\right) \right\rbrace \int_{\mathbb{R}} V_1 (y)  e\left(  \frac{ - N x y }{  p^{\ell} a q} \right) e\left(  \frac{ - N m y }{  p^{r} q} \right) dy  \notag \\
&:= \frac{N}{p^{r} q} \mathcal{C} (b , q) \mathcal{I} (x , q, m), 
\end{align} where $ \mathcal{C} (b , q)$ is the character sum and $\mathcal{I} (x , q, m) $ is the integral in the above expression. We now first evaluate the character sum in the following subsection.

\subsection{Evaluation of the character sum}

 Writing $q= p^{r_1} q^\prime$ with $(p, q^\prime)=1$, the character sum in equation \eqref{first poisson in theorem 1} can be written as 
\begin{align*}
\mathcal{C} (b , q)= \sum_{\beta (p^{r+ r_1} q^\prime)} \chi(\beta) e  \left( \frac{  - (\overline{a} + bq ) \beta }{ p^{\ell+r_1} q^\prime }  + \frac{ m \beta }{ p^{r+r_1} q^\prime  }\right).    
\end{align*} Writing $\beta= \alpha_1  q^\prime \overline{q^\prime} + \alpha_2  p^{r+r_1} \overline{ p^{r+r_1}}$ , the above character sum equals to

\begin{align*}
 \sum_{\alpha_1 (p^{r+ r_1})}  \chi(\alpha_1)  e\left( \frac{  - (\overline{a} + bq ) \alpha_1 \overline{q^\prime}}{ p^{\ell+r_1}  }  + \frac{ m \alpha_1 \overline{q^\prime} }{ p^{r+r_1}   }\right) \sum_{\alpha_2( q^\prime)} e  \left( \frac{  - (\overline{a} + bq ) \alpha_2  \overline{p^{r+r_1}} p^{r- \ell}}{  q^\prime }  + \frac{ m \alpha_2  \overline{p^{r+r_1}}}{  q^\prime  }\right). 
\end{align*} 
Again, writing $\alpha_1 = \beta_1 p^r + \beta_2$, where $\beta_2$ is modulo $ p^r$ and $ \beta_1$  modulo $ p^{r_1}$, we obtain
\begin{align*}
\mathcal{C} (b , q) &=  \sum_{\beta_2(p^r)} \chi(\beta_2)  e\left( \frac{  - (\overline{a} + bq ) p^{r-\ell}\beta_2 \overline{q^\prime }}{ p^{r+ r_1}  }  + \frac{ m \beta_2 \overline{q^\prime } }{ p^{r+ r_1}   }\right)  \sum_{\beta_1(p^{r_1})} e\left( \frac{  - (\overline{a} + bq ) \beta_1 \overline{q^\prime} p^{r - \ell}}{ p^{r_1}  }  + \frac{ m \beta_1 \overline{q^\prime} }{ p^{r_1}   }\right) \\
 & \hspace{2cm}  \times \sum_{\alpha_2( q^\prime)} e  \left( \frac{  - (\overline{a} + bq ) \alpha_2  \overline{p^{r+r_1}} p^{r- \ell}}{  q^\prime }  + \frac{ m \alpha_2  \overline{p^{r+r_1}}}{  q^\prime  }\right).
\end{align*} 
From the last two exponential sums, we obtain the congruence relations  $m-  \overline{a} p^{r- \ell} \equiv 0  (\textrm{mod}\ p^{r_1}  )$  and  $m-  \overline{a} p^{r- \ell} \equiv 0  (\textrm{mod}\ q^\prime  )$. Since we have $q = q^\prime p^{r_1}$, we obtain the congruence relation $m-  \overline{a} p^{r- \ell} \equiv 0  (\textrm{mod}\ q  )$, from which   $a (\textrm{mod} \ q)$  can be  determined. Sum over $\beta_2$ can be written as 

\begin{align*}
\sum_{\beta_2(p^r)} \chi(\beta_2)  e\left( \frac{\left( m - (\overline{a} + bq ) p^{r-\ell} \right)\beta_2 \overline{q^\prime }}{ p^{r+ r_1}  }  \right)  = \chi (q^\prime) \overline{\chi} \left( \frac{m- (\overline{a} + bq ) p^{r- \ell}}{p^{r_1}}\right) \sum_{\beta_2(p^r)}  \chi(\beta_2)  e\left( \frac{\beta_2 }{ p^{r}  }  \right) ,  
\end{align*}

  as $p^{r_1} \mid \left( m - (\overline{a} + bq ) p^{r-\ell} \right)$. We record this into following lemma.

\begin{lemma} Let $ \mathcal{C} (b , q)$ be as given in equation \eqref{first poisson in theorem 1}. we have
 
\begin{align*}
\mathcal{C} (b , q) & = \begin{cases}
   q \ \chi (q^\prime) \ \overline{\chi} \left( \frac{m- (\overline{a} + bq ) p^{r- \ell}}{p^{r_1}}\right)  \tau_{\chi}    \ \  \ \ \   \textrm{if }  \ \ \ \   m  \equiv \overline{a}  p^{r-\ell}  \mod  q \\ 
0   \ \ \ \     \textrm{otherwise} ,
\end{cases}
\end{align*}
where $q = q^\prime p^{r_1}$ and $ \tau_{\chi} $ denotes the Gauss sum. 
\end{lemma}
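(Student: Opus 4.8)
The plan is to evaluate $\mathcal{C}(b,q)$ by repeated use of the Chinese Remainder Theorem, peeling off at each stage a complete additive character sum in one variable that is either forced to vanish or imposes a congruence, until only a Gauss sum for $\chi$ remains.

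First I would factor $q = p^{r_1} q^\prime$ with $(p,q^\prime)=1$, so the modulus of the $\beta$-sum becomes $p^r q = p^{r+r_1} q^\prime$, a product of coprime factors. Writing $\beta$ through CRT as $\alpha_1 q^\prime \overline{q^\prime} + \alpha_2 p^{r+r_1} \overline{p^{r+r_1}}$ with $\alpha_1 \bmod p^{r+r_1}$ and $\alpha_2 \bmod q^\prime$, and noting that $\chi$ has conductor $p^r \mid p^{r+r_1}$ so that it depends only on $\alpha_1$, the sum $\mathcal{C}(b,q)$ factors as a product of an $\alpha_1$-sum and an $\alpha_2$-sum. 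The $\alpha_2$-sum is a complete additive character sum mod $q^\prime$, which vanishes unless $m \equiv \overline{a}\, p^{r-\ell} \pmod{q^\prime}$.

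Next I would split the $p$-part once more, $\alpha_1 = \beta_1 p^r + \beta_2$ with $\beta_2 \bmod p^r$ and $\beta_1 \bmod p^{r_1}$, so that $\chi(\alpha_1)=\chi(\beta_2)$. The $\beta_1$-sum is again a complete additive character sum, now mod $p^{r_1}$, which vanishes unless $m \equiv \overline{a}\, p^{r-\ell} \pmod{p^{r_1}}$. Combining the two congruences and using $q = p^{r_1}q^\prime$ yields the single condition $m \equiv \overline{a}\, p^{r-\ell} \pmod q$, which pins down $a \pmod q$; when it holds, the two trivial sums contribute the factor $q^\prime \cdot p^{r_1} = q$. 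What survives is $\sum_{\beta_2(p^r)} \chi(\beta_2)\, e(M\beta_2 \overline{q^\prime}/p^r)$ with $M = (m - (\overline{a}+bq)p^{r-\ell})/p^{r_1} \in \mathbb{Z}$, an integer since $p^{r_1}$ divides $bq$ as well. By the standard identity $\sum_{x(p^r)} \chi(x)\, e(cx/p^r) = \overline{\chi}(c)\,\tau_\chi$ for $\chi$ primitive mod $p^r$ (with the convention $\overline{\chi}(c)=0$ when $p\mid c$), this equals $\chi(q^\prime)\,\overline{\chi}(M)\,\tau_\chi$; multiplying by the earlier factor $q$ gives the claimed expression. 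This is also consistent when $p \mid M$: the formula then returns $0$, matching the vanishing of the incomplete Gauss sum for a primitive $\chi$.

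The only genuinely delicate point is the bookkeeping: keeping track of the several modular inverses ($\overline{q^\prime}$, $\overline{p^{r+r_1}}$, and the analogous factors that enter elsewhere in the argument) thrown up by the nested CRT decompositions, and checking that $\chi$ really collapses onto the variable $\beta_2$ up to the harmless twist $\chi(q^\prime)$. There is no analytic obstacle here; the statement is a purely finite-field and CRT computation.
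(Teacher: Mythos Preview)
Your proposal is correct and follows essentially the same approach as the paper: factor $q=p^{r_1}q'$, apply CRT to split $\beta$ over $p^{r+r_1}$ and $q'$, then split the $p$-part further as $\alpha_1=\beta_1 p^r+\beta_2$ so that the $\alpha_2$- and $\beta_1$-sums become complete additive character sums imposing the congruence $m\equiv \overline{a}\,p^{r-\ell}\pmod q$, while the surviving $\beta_2$-sum is the Gauss sum yielding $\chi(q')\,\overline{\chi}(M)\,\tau_\chi$. This is exactly the paper's computation, including the observation that $p^{r_1}\mid bq$ so that $M=(m-(\overline{a}+bq)p^{r-\ell})/p^{r_1}$ is an integer.
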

 For simplicity of notation we assume that $ q = q^\prime $($r_1 = 0$), as number of $r_1$ are bounded by $O \left(\log p^r \right)$. Next we consider the integral in equation \eqref{first poisson in theorem 1}. Integrating by parts $j$-times and using $ V_1^{(j)} (y) \ll 1$, we have
\begin{align*}
\mathcal{I} (x , q, m) \ll \left(\frac{Nx}{ p^{\ell} a q} + \frac{  N m  }{ p^r q} \right)^{-j}. 
\end{align*}  We observe that this integral is negligibly small if 
\begin{align*}
\left|  \frac{Nx}{ p^{\ell} a q} + \frac{  N m  }{ p^r q}\right| \gg N^{\varepsilon}. 
\end{align*} From the above inequality we obtain the effective range of $x$ as 

\begin{align} \label{range of x}
\left|  \frac{Nx}{ p^{\ell} a q} + \frac{  N m  }{ p^r q}\right| \ll N^{\varepsilon} \Rightarrow  \left|  x + \frac{   m a   }{ p^{r - \ell } }\right| \ll \frac{q}{Q},  
\end{align} as $a \asymp Q $ and $N/ p^{ \ell } = Q^2$. Again integrating by parts, taking $  V_1 (y)  e\left(  \frac{ - N x y }{  p^{\ell} a q} \right)$ as first function, we obtain 

\begin{align*}
\mathcal{I} (x , q, m) \ll  \left( 1 + \frac{Nx}{ p^{\ell} a q}  \right)^{j} \left( \frac{ p^r  q}{ Nm }  \right)^{j}. 
\end{align*} 
Hence the integral is negligibly small if $ m \gg ( p^r Q N^\varepsilon)/ N$. After first application of Poisson summation formula we obtain following expression for $ S^{ +} (N) $:

\begin{lemma}  Let $ S^{ +} (N)$ be as given in equation \eqref{s  plus in thm 1}.  We have 
 
 \begin{align} 
S^{ +} (N) &=    \int_{ x \ll q/Q} 
\sum_{1\leq q\leq Q}  \frac{1}{aq p^{\ell}}  \sum_{b ( p^{\ell})}  \left\lbrace\sum_{n=1}^\infty  \lambda_f(n)   e\left(  \frac{ (\overline{a}+ bq) n}{p^{\ell} q} \right) e\left(  \frac{ x n}{  p^{\ell} a q} \right)  V\left( \frac{n}{N} \right)  \right\rbrace    \notag\\  
 & \hspace{1cm}  \times \left\lbrace \frac{ \tau_{\chi}   \chi (q)  N }{ p^r }   \sum_{m \ll \frac{N^\varepsilon Q p^r}{N} }    \overline{\chi} \left( m- (\overline{a} + bq ) p^{r- \ell} \right) \mathcal{I} (x , q, m) \right\rbrace dx + O_A \left(p^{-A} \right),  
\end{align}   for any real $A >0$.

\end{lemma}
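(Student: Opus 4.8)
The plan is to start from \eqref{s  plus in thm 1} and process the inner sum over $m$ by Poisson summation, then feed the result back into the sums over $q$, $b$ and $n$. First I would write $m = \beta + c p^{r}q$ with $\beta$ running over residues modulo $p^{r}q$ and $c\in\mathbb{Z}$; this is the natural split because $\chi$ has modulus $p^{r}$ while the additive phase $e(-(\overline a+bq)m/p^{\ell}q)$ has modulus $p^{\ell}q\mid p^{r}q$, so the only genuinely aperiodic factor in $m$ is $V_1(m/N)\,e(-mx/p^{\ell}aq)$. Applying Lemma \ref{poisson} to the sum over $c$ and then substituting $(\beta+yp^{r}q)/N=z$ in the resulting archimedean integral yields the factorization displayed in \eqref{first poisson in theorem 1}, i.e. the inner $m$-sum becomes $(N/p^{r}q)\,\mathcal{C}(b,q)\,\mathcal{I}(x,q,m)$, where $\mathcal{C}(b,q)$ is the complete character sum modulo $p^{r}q$ and $\mathcal{I}(x,q,m)$ is the archimedean integral.

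Next I would evaluate $\mathcal{C}(b,q)$. Writing $q=p^{r_1}q'$ with $(p,q')=1$ and using the Chinese Remainder Theorem to split $\beta$ into residues modulo $p^{r+r_1}$ and modulo $q'$, and then splitting the $p$-part further into residues modulo $p^{r}$ and modulo $p^{r_1}$, the two sums over the auxiliary variables (the residue modulo $p^{r_1}$ and the residue modulo $q'$) vanish unless $m\equiv\overline a\,p^{r-\ell}\pmod q$; this congruence is precisely what pins down $a$ modulo $q$. When it holds, the remaining sum over the residue modulo $p^{r}$ factors as $\chi(q')\,\overline\chi\big((m-(\overline a+bq)p^{r-\ell})/p^{r_1}\big)$ times the pure Gauss sum $\tau_\chi$, using $p^{r_1}\mid(m-(\overline a+bq)p^{r-\ell})$ and the primitivity of $\chi$ modulo $p^{r}$. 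This is exactly the character-sum lemma, giving $\mathcal{C}(b,q)=q\,\chi(q')\,\overline\chi\big((m-(\overline a+bq)p^{r-\ell})/p^{r_1}\big)\,\tau_\chi$ on the congruence and $0$ otherwise. The factor $q$ here cancels the $1/(p^{r}q)$ coming from Poisson to leave the clean weight $N/p^{r}$, and since there are only $O(\log p^{r})$ admissible values of $r_1$ one may fix $r_1=0$ (so $q=q'$) at the cost of an absorbable logarithm.

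Finally I would localize the $x$-integral and the $m$-sum by repeated integration by parts in $\mathcal{I}(x,q,m)=\int_{\mathbb{R}}V_1(y)\,e(-Nxy/p^{\ell}aq)\,e(-Nmy/p^{r}q)\,dy$. Differentiating the $V_1$ factor and using $V_1^{(j)}\ll_j 1$ gives $\mathcal{I}\ll_j(Nx/p^{\ell}aq+Nm/p^{r}q)^{-j}$, so the contribution is $O_A(N^{-A})$ unless $|x+ma/p^{r-\ell}|\ll q/Q$ (using $a\asymp Q$ and $N/p^{\ell}=Q^{2}$); integrating by parts instead with $V_1(y)\,e(-Nxy/p^{\ell}aq)$ as the slowly varying factor and $e(-Nmy/p^{r}q)$ as the oscillating one gives $\mathcal{I}=O_A(N^{-A})$ unless $m\ll N^{\varepsilon}Qp^{r}/N$. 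Sweeping the negligible tails into $O_A(p^{-A})$ and substituting back into \eqref{s  plus in thm 1} produces the stated identity. The only step needing genuine care is this two-sided integration-by-parts analysis: one must check that the $y$-derivatives of the phase $-Nxy/p^{\ell}aq-Nmy/p^{r}q$ have the sizes claimed uniformly over the admissible ranges of $x$ and $m$, so that the two truncations hold simultaneously; the rest is bookkeeping with the circle-method and Poisson data already assembled in the preceding subsections.
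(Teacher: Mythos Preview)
Your proposal is correct and follows essentially the same route as the paper: Poisson summation on the $m$-sum with modulus $p^{r}q$, evaluation of the resulting character sum $\mathcal{C}(b,q)$ via the Chinese Remainder decomposition (yielding the congruence $m\equiv\overline a\,p^{r-\ell}\pmod q$ and the Gauss sum $\tau_\chi$), and then the two separate integrations by parts on $\mathcal{I}(x,q,m)$ to truncate both the $x$-integral and the dual $m$-sum. The only point worth flagging is that the constraint you derive is $|x+ma/p^{r-\ell}|\ll q/Q$ rather than literally $x\ll q/Q$; the paper's lemma statement is slightly loose here too, but in the subsequent analysis only the measure $O(q/Q)$ of the effective $x$-range is used, so this is harmless.
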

Estimating trivially at this stage, we have 
\begin{align*}
S^{ +} (N) & \ll \sum_{1\leq q\leq Q}  \frac{1}{aq p^{\ell}}  \sum_{b ( p^{\ell})}  \sum_{n=N}^{ 2N} \left| \lambda_f(n)  \right|   \ \ \times   \frac{  \left| \tau_{\chi} \right|   \chi (q)  N }{ p^r }   \sum_{m \ll \frac{N^\varepsilon Q p^r}{N} }       1  \\
& \ll \frac{1}{a p^{\ell}}  \   p^{\ell}  \ N  \ \frac{p^{r/2} N}{p^r}  \ \frac{Q p^r}{N} \ll N p^{r/2}. 
\end{align*}
Hence we are able to save $p^{r/2} $ from the first application of Poisson summation formula. 

\subsection{Applying Voronoi summation formula}

 We have $ (\overline{a}+ bq, q) = 1$. Given $a, $  there exists at most one $b$ mod $p^\ell$  such that $ \overline{a}+ bq  \equiv 0 (\textrm{mod} \  p^{\ell} )$.  For the rest of $b$ we apply  the  Voronoi summation formula to the sum over $n$ as follows (The case where $ \overline{a}+ bq \equiv 0 (\textrm{mod} \ p^{\ell})$ is similar and even simpler). We first write  $ \overline{a}+ bq = p^{\ell} q_1$, and then apply the Voronoi summation formula, which gives us more saving as the conductor is now smaller than $ qp^{\ell}$. Also we have saving of whole summation over $b$ modulo $ p^{\ell}$ ). We substitute $g(n)=  e(-nx/ p^{\ell} aq) V(n/N)$ in Lemma \ref{voronoi Maass} to get
\begin{align*}
\sum_{n=1}^\infty  \lambda_f(n)   e\left( \frac{ (\overline{a}+ bq) n}{p^{\ell} q} \right) e\left(  \frac{ x n}{  p^{\ell} a q} \right)  V\left( \frac{n}{N} \right) &  =   \frac{1}{p^{\ell} q}    \sum_{\pm} \sum_{n\geq 1} \lambda_f(\mp n) e\left( \pm  n \frac{ \overline{\overline{a}+ bq  } }{p^{\ell} q} \right)  \\
 & \hspace{1cm} H^{\pm} \left( \frac{ n}{q^2}, \frac{ N x}{aq}\right), 
\end{align*} where   
\begin{align*}
   H^{-}\left( \frac{ n}{q^2}, \frac{ N x}{ p^{\ell} aq}\right) =  \int_0^\infty e\left(- \frac{xy}{p^{\ell} a q} \right)  V \left( \frac{y}{N}\right) \left\lbrace  Y_{2 i \nu} + Y_{ - 2 i \nu}\right\rbrace \left( \frac{4 \pi \sqrt{ny}}{p^{\ell} q }\right)  dy 
\end{align*} (we have similar expression for  $  H^{+} (x, y)$).   Substituting $y/ N= z$, we have 

\begin{align} \label{math cal j x n q }
H^{-}   \left( \frac{ n}{q^2}, \frac{ N x}{P_1aq}\right) =   N \int_0^\infty e\left(- \frac{Nxy}{p^{\ell} a q} \right)   V \left( y\right) \left\lbrace  Y_{2 i \nu} + Y_{ - 2 i \nu}\right\rbrace \left( \frac{4 \pi \sqrt{n Ny}}{p^{\ell} q }\right)  dy : = N \ \mathcal{J} (x, n, q), 
\end{align}  
where $ \mathcal{J} (x, n, q)$ denotes the integral in above equation. Pulling out the oscillations, we have the following asymptotic formulae for Bessel functions(see \cite[Lemma C.2]{KMV}): 
 \begin{equation} \label{bessel}
 \ Y_{\pm2 i\nu}(x)  = e^{ix}U_{\pm2 i\nu}(x)  + e^{-ix} \overline{U}_{\pm2 i\nu}(x) \ \ \ \textrm{and} \ \ \  \left|x^{k} K_\nu^{(k)} (x) \right| \ll_{k , \nu} \frac{ e^{-x} (1+ \log |x|)}{(1+x)^{  1/2 }},
\end{equation}
where the function $ U_{\pm2 i\nu}(x)$ satisfies,
 
 \begin{equation} \label{parti der of bessel 2}
 x^j U_{\pm2 i\nu}^{(j)} (x) \ll_{j , \nu, k} (1+x)^{-1/2}.
 \end{equation}
 Also we have
$J_k (x) = e^{ix} W_k(x) + e^{-ix} \overline{W} (x)$, where 
 \[
 x^j  W_k^{(j)}(x) \ll_j \frac{1}{(1+x)^{1/2}}. 
 \] Substituting the above decomposition  for  $ Y_{\pm2 i\nu}(x)$ $ $, the first term of the integral in equation \eqref{math cal j x n q } is given by (estimation of second term is similar)

\begin{align} \label{mathcal J plus minus}
\mathcal{J}^{\pm} (x, n, q) := \int_0^\infty e^{ i \left( - \frac{ 2 \pi N x y}{p^{\ell} a q } \pm i \frac{4 \pi \sqrt{n Ny}}{p^{\ell} q }  \right) } V(y)  U_{2 i\nu}^\pm \left( \frac{4 \pi \sqrt{n Ny}}{p^{\ell} q } \right) dy, 
\end{align}
where we have denoted $U^+(y):= U(y)$ and $ U^-(y)= \overline{U}(y)$.  Integral $\mathcal{J}^{-} (x, n, q)$ has no stationary point. By equation \eqref{unstationary},  $\mathcal{J}^{-} (x, n, q)$ is negligibly small. For $\mathcal{J}^{+} (x, n, q)$ we apply the second statement of Lemma \ref{exponential inte} with 
\[
f(y) = - \frac{ 2 \pi N x y}{p^{\ell} a q } + i \frac{4 \pi \sqrt{n Ny}}{p^{\ell} q } \  \ \ \textrm{and} \ \ \ \ \  g(y) =  V(y) \   U_{2 i\nu} \left( \frac{4 \pi \sqrt{n Ny}}{p^{\ell} q } \right). 
\]
We have 
\begin{align*}
f^\prime(y)= - \frac{ 2 \pi N x }{p^{\ell} a q }  + \frac{2 \pi \sqrt{n N}}{ \sqrt{y} p^{\ell} q }, \ \ \ \  f^{\prime \prime}(y) = - \frac{ \pi \sqrt{n N}}{ y^{3/2} p^{\ell} q }.
\end{align*} We observe that 
\[
|F^{\prime \prime}(y_0)| \asymp  \frac{  \sqrt{n N}}{  p^{\ell} q }, 
\] where $ y_0$ is the stationary point,  which is $ y_0 \asymp 1$ as $ V(y)$ is supported on the interval $[1,2]$.  Using  $ U_{\pm2 i\nu} (x) \ll_{  \nu} (1+x)^{-1/2}$, and applying the second statement of the Lemma \ref{exponential inte}, we obtain 

\begin{equation} \label{bound of mathcal J x q n}
\mathcal{J}(x, n, q) \ll \frac{p^{\ell} q }{ \sqrt{n N}}, 
\end{equation} where $ \mathcal{J}(x, n, q) $ is given in equation  \eqref{math cal j x n q }. Also, integrating by parts we have
 
 \[
  \mathcal{J}(x, n, q) \ll_j \left( \frac{Nx}{p^{\ell} a q} + 1\right)^j \left( \frac{ p^{\ell}q}{\sqrt{nN}} \right)^j. 
 \] The integral is negligibly small if 
\[
\frac{p^{\ell} q }{ \sqrt{n N}} \ll p^{-\varepsilon} \Rightarrow n \gg p^\varepsilon p^{\ell}. 
\]

We record this result in the following lemma. After applying the Poisson and the Voronoi summation formula we have the following expression for $S^+(N)$. 

\begin{lemma} We have
\begin{align*} 
S^{ +} (N) &=    \int_{x\ll (qN^\varepsilon)/Q} 
\sum_{1\leq q\leq Q}  \frac{1}{q p^{\ell}}  \sideset{}{^\star}\sum_{b ( p^{\ell})}  
\frac{ \tau_{\chi}   \chi (q)  N }{ p^r }   \sum_{m \ll \frac{ Q p^{r} p^\varepsilon}{N} }     \frac{1}{a}\overline{\chi} \left( m- (\overline{a} + bq ) p^{r- \ell}  \right)  \mathcal{I} (x , q, m)   \notag\\  
 & \hspace{1cm}  \times \left\lbrace \frac{N}{ p^{\ell} q}    \sum_{\pm} \sum_{n \ll p^{\ell} p^\varepsilon} \lambda_f(\mp n) e\left( \pm  n \frac{ \overline{\overline{a}+ bq  } }{ p^{\ell}  q} \right) \mathcal{J}(x, n, q) \right\rbrace dx + O_A \left(p^{-A} \right). 
\end{align*}
\end{lemma}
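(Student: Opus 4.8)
The plan is to substitute the Voronoi summation formula into the $n$-sum appearing in the expression for $S^+(N)$ obtained after the first Poisson summation (the previous lemma) and then to truncate the resulting dual sum; the statement of the lemma is then merely a matter of collecting terms. Fix $a$, $q$ and a residue $b\bmod p^\ell$. Since $\overline a$ is a unit modulo $q$ we have $(\overline a+bq,q)=1$, and there is at most one $b\bmod p^\ell$ with $\overline a+bq\equiv 0\pmod{p^\ell}$; that term is disposed of by the same argument with the modulus reduced, which is strictly easier as the conductor of the $n$-sum is then smaller. For each of the remaining $b$ --- after reducing the fraction $(\overline a+bq)/(p^\ell q)$ so that its numerator is coprime to its denominator, which only shrinks the conductor below $p^\ell q$ --- I would pull the additive character out of the $n$-sum and apply Lemma \ref{voronoi Maass} with modulus $p^\ell q$ to the test function $h(y)=e\!\left(xy/(p^\ell a q)\right)V(y/N)$. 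This replaces $\sum_n\lambda_f(n)\,e_{p^\ell q}\!\left((\overline a+bq)n\right)h(n)$ by $\tfrac{1}{p^\ell q}\sum_{\pm}\sum_{n\ge 1}\lambda_f(\mp n)\,e\!\left(\pm n\,\overline{\overline{a}+bq}/(p^\ell q)\right)H^{\pm}\!\left(n/q^2,\,Nx/(p^\ell a q)\right)$, with $H^{\pm}$ the Bessel transform written out above; rescaling the integration variable by $N$ gives $H^{\pm}=N\,\mathcal J^{\pm}(x,n,q)$.

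Next I would insert the asymptotic expansions \eqref{bessel}--\eqref{parti der of bessel 2} of the $Y$- and $K$-Bessel functions, reducing $\mathcal J^{\pm}$ to the oscillatory integrals \eqref{mathcal J plus minus} with phase $f(y)=-2\pi Nxy/(p^\ell a q)\pm 4\pi\sqrt{nNy}/(p^\ell q)$ and amplitude $g(y)=V(y)\,U_{2i\nu}\!\left(4\pi\sqrt{nNy}/(p^\ell q)\right)$. For the sign for which $f'$ has no zero on the support $[1,2]$, repeated integration by parts (equation \eqref{unstationary}) shows the contribution is $O_A(p^{-A})$. For the other sign the stationary point is $y_0\asymp 1$ with $|f''(y_0)|\asymp\sqrt{nN}/(p^\ell q)$; after checking the derivative hypotheses \eqref{huxely bound} for $f$ and for $g$ (using $U_{\pm 2i\nu}(x)\ll(1+x)^{-1/2}$) and invoking the second statement of Lemma \ref{exponential inte}, I obtain $\mathcal J(x,n,q)\ll p^\ell q/\sqrt{nN}$, that is, \eqref{bound of mathcal J x q n}. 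One more integration by parts in $y$, together with this bound, makes the $n$-sum negligibly small as soon as $n\gg p^{\ell+\varepsilon}$, so the dual $n$-sum may be truncated at $n\ll p^{\ell+\varepsilon}$.

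Finally, assembling the truncated dual $n$-sum with the $x$-integral over the effective range $|x|\ll qN^\varepsilon/Q$, the sum over $q\le Q$, the sum over $b\bmod p^\ell$ (with the degenerate residue folded back in), the $m$-sum truncated at $m\ll Qp^rp^\varepsilon/N$ with its character sum evaluated in the previous subsection, and the Gauss-sum factor $\tau_\chi$ produced by that evaluation, one arrives at exactly the displayed expression, all omitted contributions being absorbed into $O_A(p^{-A})$. The only genuinely delicate step is the stationary-phase analysis of $\mathcal J^{\pm}$: one must verify that each of the error terms in the second statement of Lemma \ref{exponential inte} is dominated by the main term $p^\ell q/\sqrt{nN}$ throughout the relevant range $n\ll p^{\ell+\varepsilon}$ (and likewise for the $K$-Bessel piece contributing to $H^+$); everything else is routine bookkeeping.
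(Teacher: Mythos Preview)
Your proposal is correct and follows essentially the same approach as the paper: apply the Voronoi summation formula (Lemma~\ref{voronoi Maass}) to the inner $n$-sum with modulus $p^{\ell}q$, handle the at most one degenerate residue $b$ separately, expand the Bessel functions via \eqref{bessel}--\eqref{parti der of bessel 2}, discard the non-stationary piece by \eqref{unstationary}, and bound the stationary piece by Lemma~\ref{exponential inte} to obtain \eqref{bound of mathcal J x q n} and the truncation $n\ll p^{\ell+\varepsilon}$. Your treatment of the intermediate case $p\mid(\overline a+bq)$ but $p^{\ell}\nmid(\overline a+bq)$ (reducing the fraction before applying Voronoi) is in fact slightly more careful than the paper's presentation, and your mention of the $K$-Bessel piece in $H^{+}$ fills a gap the paper leaves implicit; otherwise the two arguments coincide.
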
 

Estimating trivially, we have (assuming square-root cancellation in the character sum over $b$ and Lemma \ref{rankin Selberg bound}):

\begin{align*} 
S^{ +} (N) & \ll  \sum_{1\leq q\leq Q}  \frac{1}{q p^{\ell}}    
\frac{ \left| \tau_{\chi}  \right|   N }{ p^r }   \sum_{m \ll \frac{p^\varepsilon Q p^r}{N} }    \frac{\left|  \mathcal{I} (x , q, m)  \right| }{a} \times \frac{N}{ p^{\ell} q}   \sum_{n \ll p^{\ell} N^\varepsilon} \left| \lambda_f(\mp n) \mathcal{J}(x, n, q) \right| \\
& \hspace{2cm} \times \left| \sideset{}{^\star}\sum_{b ( p^{\ell})}  e\left( \pm  n \frac{ \overline{\overline{a}+ bq  } }{ p^{\ell}  q} \right)  \overline{\chi} \left( m- (\overline{a} + bq ) p^{r- \ell}  \right)  \right|  \\
& \ll  \frac{1}{a p^{\ell}}  \frac{  p^{r/2}   N }{ p^r } \ \frac{N^\varepsilon Q p^r}{N}  \times \frac{N}{ p^{\ell} q}   \sum_{n \ll p^{\ell} } \frac{p^{\ell} q }{ \sqrt{n N}}  \times p^{\ell/2} \ll  N^\varepsilon \sqrt{N}  p^{r/2} . 
\end{align*} This shows that we are on the boundary. To obtain additional saving, we shall now apply the Cauchy inequality to the summation over $n$ and then apply the Poisson summation formula. Interchanging the order of summation, we have
\begin{align} \label{S plus before cauchy}
 S^{ +} (N) &=   \frac{N^2  \tau_{\chi} }{ p^{ r + 2\ell}}     \sum_{n \ll p^{\ell} p^\varepsilon} \lambda_f( n) \hat{S_1} (n) + O_A \left(p^{-A} \right), 
\end{align} where
\begin{align*} 
\hat{S_1} (n) & =  \int_{x\ll q/Q} \sum_{1\leq q\leq Q}  \sideset{}{^\star}\sum_{b ( p^{\ell})} \sum_{m \ll \frac{p^\varepsilon Q p^r}{N} }     \frac{\chi (q)}{ a q^2}  \overline{\chi} \left( m- (\overline{a} + bq ) p^{r- \ell}  \right)  e\left(- n\frac{\overline{a + bq} \  \overline{q}   }{p^{\ell}}  \right)    \\
&  \hspace{1cm} \times    e\left(- n\frac{ p^r\overline{p^{2\ell}}  \overline{m}   }{q}  \right)  \mathcal{I} (x , q, m) \mathcal{J}(x, n, q) dx .  
\end{align*}

\subsection{Applying Cauchy Inequality}
We split the summation over $n$ into dyadic sum. Applying the  Cauchy inequality on the summation over $n$ in equation \eqref{S plus before cauchy} and using Lemma \ref{rankin Selberg bound}, we have

 \begin{align} \label{define S 2 hat L}
 S^{ +} (N) & \ll   \frac{N^2  |\tau_{\chi}| }{p^{ r + 2\ell}}    \sum_{\substack{ L \ll P_1 \\ L \textrm{-} \textrm{dyadic}}}  \left\lbrace\sum_{n\ll L} \left| \lambda_f(n)\right|^2  \right\rbrace^{1/2} \left\lbrace \sum_{n \in \mathbb{Z}} \left| \hat{S_1} (n)\right|^2 U\left(\frac{n}{L} \right) \right\rbrace^{1/2} \notag \\
 & \ll   \frac{N^2 | \tau_{\chi} | }{ p^{ r + 2\ell}}     \sum_{\substack{ L \ll P_1 \\ L \textrm{-} \textrm{dyadic}}}  L^{1/2} \left\lbrace \hat{S_2} (L) \right\rbrace^{1/2} ,
 \end{align}
where  $P_1 = p^{ell + \varepsilon} $ and $\hat{S_2} (L) $ is given by (opening the absolute square and pushing the  summation over $n$ inside):
\begin{align} \label{S 2 hat L}
\hat{S_2} (L) :=&  \int_{x\ll q/Q}  \int_{x^\prime \ll q^\prime/Q} \sum_{m \ll \frac{N^\varepsilon Q p^r}{N} }  \sum_{m^\prime \ll \frac{N^\varepsilon Q p^r}{N} }  \sum_{1\leq q \leq Q} \sum_{1\leq  q^\prime \leq Q} \frac{\chi(q)}{ a q^2} \frac{\overline{\chi} (q^\prime)}{ a^\prime q^{\prime 2} }  \sideset{}{^\star}\sum_{b( p^{\ell})} \sideset{}{^\star}\sum_{ b^\prime ( p^{\ell})}   \notag \\
  &  \hspace{10pt}\times  \overline{\chi} \left( m- (\overline{a} + bq ) p^{r- \ell}  \right) \chi \left( m^\prime- (\overline{a} + b^\prime q^\prime ) p^{r- \ell}  \right) \mathfrak{I} (x , q, m)    \mathfrak{I} (x , q^\prime, m^\prime)   \ \  \ \mathcal{T} \   \ dx \  dx^\prime,   
\end{align} where 

\begin{align} \label{mathcal T}
\mathcal{T} & := \sum_{n \in \mathbb{Z}}  e\left( n\left\lbrace -\frac{ p^r\overline{p^{2\ell}}  \overline{m}   }{q}   +  \frac{ p^r\overline{p^{2\ell}}  \overline{m^\prime}   }{q^\prime}  - \frac{\overline{a + bq} \  \overline{q}   }{p^{\ell}} + \frac{\overline{a + b^\prime q^\prime} \  \overline{q^\prime}   }{p^{\ell}} \right\rbrace\right) U\left(\frac{n}{L} \right)  \mathcal{J}(x, n, q)  \mathcal{J}(x, n, q^\prime) \notag\\  
\end{align}
\subsection{Second application of Poisson summation formula}

We write a smooth bump function $U(n/L) \mathcal{J}(x, n, q) \mathcal{J}(x, n, q^\prime) := U_1 (n/L)$, where $  \mathcal{J}(x, n, q)$ is  as given in equation \eqref{math cal j x n q }.  Writing $ n = \alpha + q q^\prime p^{\ell} c$, $c \in \mathbb{Z}$ and applying Poisson summation formula to sum over $c$, we have

\begin{align*}
\mathcal{T} & := \sum_{\alpha ( q q^\prime p^{\ell} )}  e\left( \alpha\left\lbrace -\frac{ p^r\overline{p^{2\ell}}  \overline{m}   }{q}   +  \frac{ p^r\overline{p^{2\ell}}  \overline{m^\prime}   }{q^\prime}  - \frac{\overline{a + bq} \  \overline{q}   }{p^{\ell}} + \frac{\overline{a + bq^\prime} \  \overline{q^\prime}   }{p^{\ell}} \right\rbrace\right)\\
 & \hspace{2cm} \int_{\mathbb{R}} \sum_{ n \in \mathbb{Z}}  U_1 \left( \frac{\alpha+ yq q^\prime p^{\ell} }{L}\right) e(-ny) dy. 
\end{align*} We now apply the change of variable $(\alpha+ yq q^\prime p^{\ell})/L = z$ to get 

\begin{align} \label{T in theorem 2}
\mathcal{T} & = \frac{  L }{ q q^\prime p^{\ell} }\sum_{ n \in \mathbb{Z}} \sum_{\alpha ( q q^\prime p^{\ell} )}  e\left( \alpha\left\lbrace -\frac{ p^r\overline{p^{2\ell}}  \overline{m}   }{q}   +  \frac{ p^r\overline{p^{2\ell}}  \overline{m^\prime}   }{q^\prime}  - \frac{\overline{a + bq} \  \overline{q}   }{p^{\ell}} + \frac{\overline{a + bq^\prime} \  \overline{q^\prime}   }{p^{\ell}} + \frac{n}{ q q^\prime p^{\ell}} \right\rbrace\right) \notag \\
 & \hspace{4cm} \times \int_{\mathbb{R}}   U_1 \left( y\right) e\left( - \frac{ n L y}{ q q^\prime p^{\ell} }\right) dy. 
\end{align} We have $U_1 \left( y\right)  =  U(y) \mathcal{J}(x, Ly, q) \mathcal{J}(x, Ly, q^\prime)$. From the expression of $ \mathcal{J}(x, Lu, q)$ in equation \eqref{math cal j x n q } (note that after change of variable we have $u \asymp 1$) and equation \eqref{parti der of bessel 2},  we have 

\begin{align*}
&\frac{\partial}{\partial u} \mathcal{J}(x, Lu, q) =  \int_0^\infty e\left(- \frac{Nxy}{p^{\ell} a q} \right)   V \left( y\right) \frac{\partial}{\partial u} \left\lbrace  Y_{2 i \nu} + Y_{ - 2 i \nu} \right\rbrace \left( \frac{4 \pi \sqrt{ L u Ny}}{p^{\ell} q }\right)  dy \\
& = \int_0^\infty e\left(- \frac{Nxy}{p^{\ell} a q} \right)   V \left( y\right) \frac{1}{u}  \frac{4 \pi \sqrt{ L u Ny}}{p^{\ell} q }  \left\lbrace  Y_{2 i \nu}^\prime + Y_{ - 2 i \nu}^\prime \right\rbrace \left( \frac{4 \pi \sqrt{ L u Ny}}{p^{\ell} q }\right)  dy \ll 1. 
\end{align*}
This shows that there is no oscillation in function $ \mathcal{J}(x, Ln, q)$ with respect to variable $n$. Also from equation \eqref{bound of mathcal J x q n} we have 

\begin{align} \label{bound for U 1 y}
\int_{\mathbb{R}}   U_1 \left( y\right) e\left( - \frac{ n L y}{ q q^\prime p^{\ell} }\right) dy & =  \int_{\mathbb{R}}  U(y) \mathcal{J}(x, Ly, q) \mathcal{J}(x, Ly, q^\prime) e\left( - \frac{ n L y}{ q q^\prime p^{\ell} }\right) dy \notag \\
& \ll  \frac{p^{\ell} q }{ \sqrt{L N}} \frac{p^{\ell} q^\prime }{ \sqrt{ L N}} \int_1^2  U(y) dy   \ll \frac{p^{ 2 \ell} q  q^\prime}{ L N}, 
\end{align} as $U(y)$ is supported on the interval $[1, 2]$.

 Integrating by parts taking $U_1(y)$ as first function, we observe that the  integral in equation \eqref{T in theorem 2} is negligible if $n \gg p^\varepsilon q q^\prime p^\ell /L.$  Evaluating the above character sum we get the following congruence relation: 
\begin{align*} 
 - p^r\overline{p^{2\ell}}  \overline{m} p^{\ell} q^\prime + p^r\overline{p^{2\ell}}  \overline{m^\prime} p^{\ell} q  - \overline{a + bq} \  \overline{q} q q^\prime + \overline{a + b^\prime q^\prime} \  \overline{q^\prime} q q^\prime + n \equiv 0  \left(\textrm{mod} \ q q^\prime p^{\ell} \right). 
\end{align*} We solve the above congruence modulo $p^{\ell}$  and modulo $  q q^\prime$ respectively to obtain 

\begin{align} \label{congruence in theorem 2}
  - \overline{a + bq} \ q^\prime + \overline{a + b^\prime q^\prime} \   q  + n \equiv 0  \left(\textrm{mod} \  p^{\ell} \right) \  \textrm{and} \  - p^r\overline{p^{2\ell}}  \overline{m} p^{\ell} q^\prime + p^r\overline{p^{2\ell}}  \overline{m^\prime} p^{\ell} q  + n \equiv 0  \left(\textrm{mod} \ q q^\prime \right). 
\end{align}

Writing $n= -  p^r\overline{p^{2\ell}}  \overline{m} p^{\ell} q^\prime + p^r\overline{p^{2\ell}}  \overline{m^\prime} p^{\ell} q + j q q^\prime $, we observe that the number of $n $ satisfying above congruence relation is same as the number of $j$'s. Since we also have $n\ll N^\varepsilon q q^\prime$,  we conclude $j\ll N^\varepsilon$.  Hence the number of solutions of $n$ satisfying the above congruence relation modulo $q q^\prime$, and $n\ll q q^\prime N^\varepsilon$ is bounded by $N^\varepsilon$. For congruence relation modulo $p^{\ell} $ in the above equation,  we substitute the change of variable $a + bq = \alpha $ and $ a + b^\prime q^\prime = \alpha^\prime$  to obtain  
\begin{equation} \label{relation alpha }
\overline{\alpha} \ q^\prime + \overline{\alpha^\prime} \   q  + n \equiv 0  \left(\textrm{mod} \  p^{\ell} \right). 
\end{equation} We record the bound for $ \mathcal{T} $ in the following lemma:

\begin{lemma} \label{lemma mathcal T}
Let $ \mathcal{T} $ be as given in equation \eqref{mathcal T}. We have  

\begin{align*}
\mathcal{T} = L \sideset{}{^\dagger}\sum_{ \substack{n\ll p^\varepsilon q q^\prime p^\ell/L  } } \int_{\mathbb{R}}   U_1 \left( y\right) e\left( - \frac{ n L y}{ q q^\prime p^{\ell} }\right) dy, 
\end{align*}
where we $\dagger$ in above summation denote that $n$ satisfies the congruence relation given in equation \eqref{congruence in theorem 2}. 
\end{lemma}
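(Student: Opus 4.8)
The plan is to carry out the manipulation already indicated in the discussion preceding the lemma: apply the Poisson summation formula (Lemma \ref{poisson}) to the sum over $n$ in \eqref{mathcal T}, evaluate the resulting complete additive character sum, and then truncate the dual variable by repeated integration by parts.

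First I would abbreviate $U_1(y):=U(y)\,\mathcal{J}(x,Ly,q)\,\mathcal{J}(x,Ly,q^\prime)$; for fixed $x,q,q^\prime,L$ this is a smooth function of $y$ supported on $[1,2]$. Splitting the sum over $n\in\mathbb{Z}$ into residue classes modulo $M:=qq^\prime p^\ell$, that is $n=\alpha+Mc$ with $\alpha$ running over $\mathbb{Z}/M\mathbb{Z}$ and $c\in\mathbb{Z}$, and noting that the additive character in \eqref{mathcal T} depends on $n$ only through $n\bmod p^\ell$, $n\bmod q$ and $n\bmod q^\prime$, hence only through $\alpha$, I apply Poisson to the sum over $c$ and substitute $(\alpha+yM)/L=z$ to obtain
\[
\mathcal{T}=\frac{L}{M}\sum_{n\in\mathbb{Z}}\Bigl(\sum_{\alpha(M)}e\bigl(\alpha\vartheta\bigr)e\bigl(\tfrac{n\alpha}{M}\bigr)\Bigr)\int_{\mathbb{R}}U_1(y)\,e\bigl(-\tfrac{nLy}{M}\bigr)\,dy,
\]
where $\vartheta$ denotes the bracketed phase in \eqref{mathcal T}. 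Since $M\vartheta\in\mathbb{Z}$ and $\alpha\mapsto\alpha(\vartheta+n/M)$ is linear, the inner sum equals $M$ when $M\vartheta+n\equiv0\pmod M$ and vanishes otherwise; reducing this single congruence modulo $p^\ell$ and modulo $qq^\prime$ separately (legitimate since $(p,qq^\prime)=1$, so that the Chinese remainder theorem recombines them) produces exactly the pair of relations \eqref{congruence in theorem 2}. The factor $M$ cancels the $1/M$, leaving the coefficient $L$ claimed in the lemma.

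For the range of $n$, I would integrate by parts repeatedly in the $y$-integral, differentiating $U_1$ and integrating the exponential. The essential input is that $U_1$ is a genuine fixed bump function of $y$ on $[1,2]$ satisfying $y^jU_1^{(j)}(y)\ll_j1$ with no surviving oscillation — this is precisely the statement $\partial_u\mathcal{J}(x,Lu,q)\ll1$ verified just before the lemma, via \eqref{parti der of bessel 2} and the stationary-phase bound \eqref{bound of mathcal J x q n}. Each integration by parts then saves a factor $M/(nL)$, so the $y$-integral is $O_A\bigl((M/(nL))^A\bigr)$ and hence negligible unless $nL/M\ll p^\varepsilon$, i.e. $|n|\ll p^\varepsilon qq^\prime p^\ell/L$. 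Combining this truncation with the congruence restriction yields the asserted formula, with $\sideset{}{^\dagger}\sum$ recording the congruence \eqref{congruence in theorem 2}.

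The only genuinely delicate step is establishing that $\mathcal{J}(x,Ly,q)$ does not oscillate in the variable $y$ (otherwise the integration by parts would not be clean and the truncation point for $n$ would move); as this has been checked in advance, the remaining work for this lemma is the routine Poisson-plus-complete-sum bookkeeping sketched above, and I would expect the final write-up to be correspondingly short.
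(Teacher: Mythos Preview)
Your proposal is correct and follows essentially the same route as the paper: the paper carries out exactly this Poisson-in-residue-classes computation in the paragraphs preceding the lemma (arriving at \eqref{T in theorem 2}), evaluates the complete additive sum to produce the congruences \eqref{congruence in theorem 2}, and truncates the dual variable by integration by parts after checking that $\mathcal{J}(x,Lu,q)$ has no oscillation in $u$. One minor imprecision: you write $y^jU_1^{(j)}(y)\ll_j 1$, but in fact $U_1$ itself has size $\asymp p^{2\ell}qq^\prime/(LN)$ by \eqref{bound for U 1 y}; what matters for the truncation is only that differentiation in $y$ does not enlarge this, which is exactly what the no-oscillation check gives.
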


Substituting the bound for $\mathcal{T} $ in equation \eqref{S 2 hat L} we obtain 

\begin{align} \label{D and ND in thm 1} 
& \hat{S_2} (L) = L   \int_{x\ll q/Q}  \int_{x^\prime\ll q^\prime/Q}  \sum_{m \ll \frac{N^\varepsilon Q p^r}{N} }  \sum_{m^\prime \ll \frac{N^\varepsilon Q p^r}{N} }  \sum_{1\leq q \leq Q} \sum_{1\leq  q^\prime \leq Q} \frac{\chi(q)}{a q^2} \frac{\overline{\chi} (q^\prime)}{a^\prime q^{\prime 2} }  \sideset{}{^\dagger}\sum_{ \substack{n\ll p^\varepsilon q q^\prime p^\ell/L  } }  \sideset{}{^\star}\sum_{\alpha( p^{\ell})} \sideset{}{^\star}\sum_{ \alpha^\prime ( p^{\ell})}   \notag \\  
  & \hspace{10pt} \times   \overline{\chi} \left( m- \alpha p^{r- \ell}  \right) \chi \left( m^\prime-  \alpha^\prime  p^{r- \ell}  \right) \mathfrak{I} (x , q, m)  \mathfrak{I} (x , q^\prime, m^\prime) \int_{\mathbb{R}}   U_1 \left( y\right) e\left( - \frac{ n L y}{ q q^\prime p^{\ell} }\right) dy \ dx \ dx^\prime  \notag \\
   & \hspace{2cm}= \hat{S_2} (D) + \hat{S_2} (ND),
\end{align} where $\alpha$ and $\alpha^\prime$ are related by the congruence relation given in equation \eqref{relation alpha }, and $ \hat{S_2} (D)$ (respectively $ \hat{S_2} (ND)$) is contribution of the diagonal terms (respectively the off-diagonal terms). 
The contribution of the diagonal terms ($\alpha = \alpha^\prime, m= m^\prime $ and $q=q^\prime$) is bounded by (using $ \mathfrak{I} (x , q, m) \ll 1$, bound from the equation \eqref{bound for U 1 y} and sum over $n$ satisfying the congruence relation given in equation \eqref{congruence in theorem 2} is bounded by $p^\epsilon p^\ell /N$):

\begin{align} \label{diagonal in theorem 1}
\hat{S_2} (D) & \ll L  \int_{x\ll q/Q} \int_{x^\prime\ll q^\prime/Q}  \sum_{m \ll \frac{ Q p^r}{N} } \sum_{1\leq q \leq Q} \frac{1}{q^4} \sideset{}{^\dagger}\sum_{ \substack{n\ll p^\varepsilon q q^\prime p^\ell/L  } }   \sum_{\alpha( p^{\ell})}  \frac{\left| \mathfrak{I} (x , q, m)\right|^2}{a^2} \int_{\mathbb{R}} |U_1 (y)| dy \ dx \ dx^\prime \notag \\
& \ll L  N^\varepsilon    \int_{x\ll q/Q} \frac{ Q p^r}{N}  \sum_{1\leq q \leq Q} \sideset{}{^\dagger}\sum_{ \substack{n\ll p^\varepsilon q q^\prime p^\ell/N  } }  \frac{1}{ a^2 q^4} \   p^{  \ell} \  \frac{p^{ 2 \ell} q^2}{ L N}  dx  \notag \\
& \ll   \frac{p^{ 3 \ell} N^\varepsilon }{   N}  \frac{ Q p^r}{N} \frac{p^\ell}{L}  \sum_{1\leq q \leq Q} \frac{1}{ a^2 q^2} \times \frac{q}{Q}  \ll \frac{p^{ 3 \ell} N^\varepsilon }{ Q^2  N}   \frac{  p^r}{N}  \frac{p^\ell}{L},  
\end{align}
as $ a\asymp Q.$ Substituting the value of $\alpha^\prime$ from the congruence relation given in equation \eqref{relation alpha }, we see that the contribution of the off-diagonal term is  given by:

\begin{align} \label{non diagonal in theorem 1} 
\hat{S_2} (ND) & =  L  \int_{x\ll q/Q}  \int_{x^\prime \ll q^\prime /Q} \sum_{m \ll Q } \sum_{ m^\prime \ll Q } \sum_{1\leq q \leq Q} \sum_{1\leq  q^\prime \leq Q} \frac{\chi(q)}{a q^2} \frac{\overline{\chi} (q^\prime)}{ a^\prime q^{\prime 2} }  \sideset{}{^\dagger}\sum_{ \substack{n\ll p^\varepsilon q q^\prime p^\ell/N  } }  \notag \\ 
  & \hspace{20pt} \times  \left\lbrace \sideset{}{^\star}\sum_{\alpha( p^{\ell})}  \overline{\chi} \left( m- \alpha p^{r- \ell}  \right) \chi \left( m^\prime +   \alpha q  \ \overline{n+ q^\prime} p^{r- \ell}  \right) \right\rbrace \mathfrak{I} (x , q, m)  \mathfrak{I} (x , q^\prime, m^\prime) \notag \\
 & \hspace{20pt} \times \int_{\mathbb{R}}   U_1 \left( y\right) e\left( - \frac{ n L y}{ q q^\prime p^{\ell} }\right) dy  \ dx \ dx^\prime.  
\end{align} 
Next we evaluate the exponential sum in the  above equation. 


\subsection{Evaluation of the character sum} 
 In this subsection we shall prove the following lemma

\begin{lemma} \label{character sum in thm 1} 
Let $\mathcal{A}$ be the character sum given by

\begin{align*}
\mathcal{A}:= \sideset{}{^\star}\sum_{\alpha( p^{\ell})}  \overline{\chi} \left( m- \alpha p^{r- \ell} \right)  \chi \left( m^\prime +   \alpha q  \ \overline{n+ q^\prime} p^{r- \ell}  \right), 
 \end{align*} with $ \ell = 2 \floor{\frac{r}{3}} $. We have 
 \[
 \mathcal{A} \ll p^{\ell/2 + \epsilon}. 
 \]
\end{lemma}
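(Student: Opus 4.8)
The plan is to reduce the sum $\mathcal{A}$ over $\alpha$ modulo $p^\ell$ to a character sum modulo $p^{r-\ell}$ on which the Weil-type bound of Lemma \ref{weil} (or its standard $p$-power-modulus analogue) applies, and to show that the rational function appearing as the argument of $\chi$ is not a perfect $k$-th power, so that square-root cancellation holds. First I would note that since $\ell = 2\lfloor r/3\rfloor$, one has $r - \ell = r - 2\lfloor r/3\rfloor \le \ell$ (with equality when $3\mid r$), so the modulus $p^{r-\ell}$ of the character values $\overline{\chi}(m - \alpha p^{r-\ell})$ and $\chi(m' + \alpha q\,\overline{n+q'}\,p^{r-\ell})$ is at most $p^\ell$; thus both characters only see $\alpha$ through its residue modulo $p^{r-\ell}$, and there is no obstruction to repackaging $\mathcal{A}$ as (a multiple of) a complete sum modulo $p^{r-\ell}$. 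Explicitly, writing $\alpha = \alpha_0 + p^{r-\ell} t$ with $\alpha_0$ ranging over units mod $p^{r-\ell}$ and $t$ mod $p^{\ell - (r-\ell)} = p^{2\ell - r}$, the inner sum over $t$ is empty of $\alpha$-dependence, contributing a clean factor $p^{2\ell - r}$, and we are left with $p^{2\ell-r}$ times a complete multiplicative character sum mod $p^{r-\ell}$.

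Next I would make the substitutions $u = m - \alpha p^{r-\ell}$ and express $\alpha$ (mod $p^{r-\ell}$) in terms of $u$: since $m$ is a unit (it satisfies $m \equiv \overline a p^{r-\ell}$ type relations only mod $q$, and one checks $p\nmid m$ in the relevant ranges — else the character value vanishes and the term drops), $\alpha \equiv (m - u)\,\overline{p^{r-\ell}} \pmod{\cdot}$ after clearing the $p$-power; more cleanly, parametrize by $\alpha$ itself and write the second argument as a rational function of $\alpha$. The sum becomes $\sum_{\alpha}^{\star}\overline{\chi}(m - \alpha p^{r-\ell})\,\chi(m' + c\,\alpha\, p^{r-\ell})$ with $c = q\,\overline{n+q'}$ a fixed unit, i.e. a sum of the shape $\sum_\alpha \psi(g(\alpha))$ where $\psi = \chi$ restricted mod $p^{r-\ell}$ and $g(\alpha) = (m' + c\alpha p^{r-\ell})(\overline{m - \alpha p^{r-\ell}})$ is a nonconstant rational function. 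The key point is that $g$ is \emph{not} a perfect $k$-th power (nor does its numerator/denominator collapse): its divisor on $\mathbb{P}^1$ over $\overline{\mathbb{F}}_p$ has a simple zero at $\alpha = -m'\,\overline{c}\,\overline{p^{r-\ell}}$ and a simple pole at $\alpha = m\,\overline{p^{r-\ell}}$, and these two points are distinct precisely because $mc \neq -m'$ generically — and when they \emph{do} coincide, $g$ is a nonzero constant and the sum is trivially bounded by the modulus, which is $O(p^{(r-\ell)/2})\cdot(\text{unit count})$, consistent with the claim. For the generic case, applying Lemma \ref{weil} with $\chi$ of order $k = p^{r-\ell-1}(p-1)$ (or the elementary $p$-adic stationary phase / Postnikov-formula treatment for prime-power modulus) gives square-root cancellation: the complete sum mod $p^{r-\ell}$ is $O(p^{(r-\ell)/2 + \epsilon})$. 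Multiplying back by the factor $p^{2\ell - r}$ yields $\mathcal{A} \ll p^{2\ell - r}\cdot p^{(r-\ell)/2 + \epsilon} = p^{(3\ell - r)/2 + \epsilon}$; since $\ell = 2\lfloor r/3\rfloor \le 2r/3$, we get $3\ell - r \le \ell$, hence $\mathcal{A} \ll p^{\ell/2 + \epsilon}$, as required.

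The main obstacle I anticipate is making the square-root-cancellation step rigorous for a character of \emph{prime-power} (not prime) modulus: Lemma \ref{weil} as stated is for modulus a single prime $p$, so one cannot apply it verbatim to $\psi$ mod $p^{r-\ell}$. The clean fix is to invoke instead the standard evaluation of multiplicative character sums to prime-power moduli via $p$-adic stationary phase — writing $\chi(1 + p^a v) = e_{p^{r-\ell}}(\gamma\, v \cdot(\text{unit}))$ on the relevant cosets (Postnikov's formula), reducing $\mathcal{A}$ after a Taylor expansion of $\log g$ to a Gauss-type sum plus an exponential sum with a nondegenerate critical point, which contributes exactly $p^{(r-\ell)/2}$; the nondegeneracy is exactly the statement that $g$ is not a $k$-th power, checked as above. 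A subsidiary technical point is the case analysis $p\mid m$ or $p\mid m'$ (and the boundary case $mc + m' \equiv 0 \pmod p$), but in each such degenerate case the sum is either empty or trivially of size at most the modulus times a unit, so no cancellation is needed and the bound $p^{\ell/2+\epsilon}$ survives with room to spare. I would present the generic computation in detail and dispatch the degenerate cases in a sentence.
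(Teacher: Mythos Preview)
Your central reduction is incorrect. You claim that $\overline{\chi}(m-\alpha p^{r-\ell})$ and $\chi(m'+c\alpha p^{r-\ell})$ depend on $\alpha$ only through its residue modulo $p^{r-\ell}$, so that $\mathcal A$ factors as $p^{2\ell-r}$ times a complete sum of length $p^{r-\ell}$. But $\chi$ is primitive of conductor $p^r$: shifting $\alpha\mapsto\alpha+p^{r-\ell}t$ changes the argument $m-\alpha p^{r-\ell}$ by $tp^{2(r-\ell)}$, and this is killed by $\chi$ only when $2(r-\ell)\ge r$, i.e.\ $\ell\le r/2$. Since $\ell=2\lfloor r/3\rfloor$ one has $\ell>r/2$ for every $r\ge 3$, so the summand genuinely depends on $\alpha$ modulo $p^\ell$ and the ``clean factor $p^{2\ell-r}$'' never appears. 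Your subsequent appeal to Lemma~\ref{weil} is also misplaced: that lemma is for prime modulus, whereas here the relevant modulus is a prime power.

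There is a matching arithmetic slip at the end: you assert $3\ell-r\le \ell$, i.e.\ $\ell\le r/2$, which as just noted fails. In the clean case $3\mid r$ your bound would read $p^{(3\ell-r)/2}=p^{r/2}$, strictly worse than the target $p^{\ell/2}=p^{r/3}$; so even granting the (false) reduction the argument would not close.

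The paper's proof is exactly the $p$-adic stationary phase you mention as a ``fix'', but applied directly at level $p^\ell$ rather than after a nonexistent descent to $p^{r-\ell}$. One writes $\alpha=\alpha_1 p^{\ell/2}+\alpha_2$ with $\alpha_1,\alpha_2$ running modulo $p^{\ell/2}$. The $\alpha_1$-shift moves each argument by a multiple of $p^{r-\ell/2}$, and since $2(r-\ell/2)\ge r$ a first-order Hensel expansion linearizes the summand in $\alpha_1$: one obtains $\chi(A(\alpha_2))\cdot\chi\bigl(1+C(\alpha_2)\alpha_1 p^{r-\ell/2}\bigr)$, where the second factor is an \emph{additive} character of $\alpha_1$ modulo $p^{\ell/2}$ (Postnikov). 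The free sum over $\alpha_1$ then yields $p^{\ell/2}$ together with a congruence $C(\alpha_2)\equiv 0\pmod{p^{\ell/2}}$ that pins $\alpha_2$ to $O(p^\varepsilon)$ values, giving $\mathcal A\ll p^{\ell/2+\varepsilon}$. No Weil input is needed; the saving comes entirely from the additive orthogonality in $\alpha_1$.
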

 \begin{proof}

 Applying the change of variable $\alpha = \alpha_1 p^{\ell/ 2} + \alpha_2 $ where $\alpha_1$ and $ \alpha_2$ run over residue classes modulo $ p^{\ell/ 2}$, the above character sum reduces to 

\begin{align*}  
\mathcal{A} &=  \sideset{}{^\star} \sum_{\alpha_2( p^{\ell/ 2})} \sideset{}{^\star} \sum_{\alpha_1( p^{\ell/ 2})} \overline{\chi} \left( m- \alpha_2 p^{r-\ell}  -  \alpha_1 p^{2(r-\ell)}   \right)  \chi \left( m^\prime +   \left( \alpha_1 p^{r-\ell} + \alpha_2 \right) q  \ \overline{n+ q^\prime} p^{r-\ell}  \right) \notag \\
& =  \sideset{}{^\star} \sum_{\alpha_2( p^{\ell/2})}  \sideset{}{^\star} \sum_{\alpha_1( p^{\ell/2})}   \chi  \left\lbrace \left( m^\prime +  q  \ \overline{n+ q^\prime} p^{r-\ell}   \alpha_2  +  q  \ \overline{n+ q^\prime} \alpha_1 p^{2(r-\ell)}  \right)  \right. \notag \\ 
 & \left. \hspace{3cm} \left( \overline{m - \alpha_2p^{r-\ell} } +  (\overline{m - \alpha_2p^{r-\ell} })^2  \alpha_1 \ p^{ 2(r-\ell)}\right)  \right\rbrace ,   \notag 
 \end{align*} 
 as $ \overline{ m- \alpha_2 p^{r-\ell}  -  \alpha_1 p^{2(r-\ell)}   } =  \overline{m - \alpha_2p^{r-\ell} } +  (\overline{m - \alpha_2p^{r-\ell} })^2  \alpha_1 \ p^{ 2(r-\ell)} ( \textrm{mod} \ p^r )$. Which reduces to 
 \begin{align*}
\mathcal{A} &=\sideset{}{^\star} \sum_{\alpha_2( p^{\ell/2})}  \sideset{}{^\star} \sum_{\alpha_1( p^{\ell/2})}   \chi  \left( A(\alpha_2)  + B(\alpha_2) \alpha_1  p^{ 2(r-\ell)} \right) \notag\\ 
&= \sideset{}{^\star} \sum_{\alpha_2( p^{\ell/2})}  A(\alpha_2) \sideset{}{^\star} \sum_{\alpha_1( p^{\ell/2})}   \chi  \left( 1+  \overline{A(\alpha_2)} B(\alpha_2) \alpha_1  p^{ 2(r-\ell)} \right),  
 \end{align*} 
 
 where $ A(\alpha_2) = m^\prime \  \overline{m - \alpha_2p^{r-\ell} } + q \ \overline{n+ q^\prime} \alpha_2  p^{ r-\ell}  \overline{m - \alpha_2p^{r-\ell} }  $  and $ B(\alpha_2) = m^\prime \  ( \overline{m - \alpha_2p^{r-\ell} })^2  + q \ \overline{n+ q^\prime}  \   \overline{m - \alpha_2p^{r-\ell} }  .$ Note that $ ( A(\alpha_2), p ) = 1$, otherwise $ \chi  \left( A(\alpha_2)  + B(\alpha_2) \alpha_1  p^{ 2r/3} \right) = 0. $ For a fixed $\alpha_2$,   $\chi  \left( 1+  \overline{A(\alpha_2)} B(\alpha_2) \alpha_1  p^{ 2(r-\ell)} \right) : = \chi  \left( 1+  C(\alpha_2) \alpha_1  p^{ 2(r-\ell)} \right)$ is  an additive character of modulus $ p^{\ell/2}$, as we have
 
\begin{align*}
\chi  \left( 1+  C(\alpha_2) \alpha_1  p^{ 2(r-\ell)} \right) \chi  \left( 1+  C(\alpha_2) \alpha_1^\prime  p^{ 2(r-\ell)} \right) = \chi  \left( 1+  C(\alpha_2) (  \alpha_1 + \alpha_1^\prime)  p^{ 2(r-\ell)} \right), 
\end{align*} as we have $4(r-\ell) \geq r$. Hence there exists an integer $ b$ (uniquely determined modulo $ p^{\ell/2}$) such that 
\[
\chi  \left( 1+  C(\alpha_2) \alpha_1  p^{ 2(r-\ell)} \right) =  e \left( \frac{ \alpha_1 \  bC(\alpha_2) }{p^{\ell}}\right).
\]
Executing the sum over $\alpha_1$ given in equation \eqref{character sum in thm 1} we have 

\begin{align}
\mathcal{A} &= p^{\ell/2}  \sideset{}{^\star} \sum_{\substack {\alpha_2( p^{\ell/2}) \\ b C(\alpha_2) \equiv 0 (\textrm{mod} \ p^{\ell/2}) }} \chi \left( A(\alpha_2)\right) \ll   p^{\ell/2 + \varepsilon}. 
\end{align}

\end{proof}

Substituting the bound for the  character sum in equation \eqref{non diagonal in theorem 1} and using the bounds of $U_1(y)$ given in equation \eqref{bound for U 1 y}, we have

\begin{align} 
\hat{S_2} (ND)  & \ll p^\varepsilon L \sum_{m \ll \frac{ Q p^r}{N} } \sum_{m^\prime \ll \frac{ Q p^r}{N} }   \sum_{1\leq q \leq Q} \sum_{1\leq  q^\prime \leq Q} \ \sideset{}{^\dagger} \sum_{ \substack{n\ll p^\varepsilon q q^\prime p^\ell/L  } }  \frac{1 }{a q^2} \frac{1 }{ a^\prime q^{\prime 2} }  \ p^{\ell/ 2} \  \frac{p^{ 2 \ell} q  q^\prime}{ L N} \notag \\ 
 & \ll p^\varepsilon  \frac{p^{ 5\ell/2 } }{ Q^2 N} \left( \frac{ Q p^r}{N}\right)^2  \frac{p^\ell}{L}\ll  p^\varepsilon  \frac{p^{ 3\ell/2 } }{ Q^2} \left( \frac{  p^r}{N}\right)^2 \frac{p^\ell}{L},
\end{align}   as $a, a^\prime \asymp Q$,  $Q^2 = N/ p^{ \ell }$ and dagger on summation over $n$ shows that $n$ satisfies the congruence relation modulo $q q^\prime$ as given in equation  \eqref{congruence in theorem 2}.  Substituting the bounds for $ \hat{S_2} (D)$ and $ \hat{S_2} (ND)$ in equation \eqref{D and ND in thm 1} we have 
\begin{align*}
\hat{S_2} (L) \ll  p^\varepsilon  \frac{p^\ell}{L} \left(  \frac{p^{ 3 \ell}  }{ Q^2 N} \frac{  p^r}{N}  +    \frac{p^{ 3\ell/2 } }{ Q^2}  \frac{  p^{2 r}}{N^2}\right) \ll p^\varepsilon \frac{p^\ell}{L Q^2 N^2} p^{ \frac{3 \ell}{2 }} p^r \left(  p^{ \frac{3 \ell}{2 }} + p^r \right) . 
\end{align*}

Substituting the bound for $ \hat{S_2} (L) $ in equation \eqref{define S 2 hat L} we obtain

\begin{align} 
 S_2^{ +} (N) & \ll   p^\varepsilon  \frac{N^2 \left| \tau_{\chi}  \right|}{ p^{ r + 2\ell}}    \sum_{\substack{ L \ll P_1 \\ L \textrm{-} \textrm{dyadic}}}  L^{1/2} \times  \frac{p^{ \frac{ \ell}{2 } }}{\sqrt{L} Q N} p^{ \frac{3 \ell}{4 } }p^{\frac{r}{2} } \left(  p^{ \frac{3 \ell}{4 } } + p^{\frac{r}{2}} \right)  \notag\\
 & \ll p^\varepsilon \frac{N}{Q  p^{ \frac{3 \ell}{4 } } } \left(  p^{ \frac{3 \ell}{4 } } + p^{\frac{r}{2}} \right)  \ll p^\varepsilon  N^{1/2 } \left(  p^{ \frac{ \ell}{2 } } + \frac{p^{\frac{r}{2}}}{ p^{ \frac{ \ell}{4 } }} \right)   \ll p^\varepsilon  N^{1/2 } p^{\frac{1}{2} \left( r - \floor{r/3}\right) + \varepsilon}  ,
 \end{align} 
 
 as $ \ell = 2 \floor{\frac{r}{3}} $ and  $Q= N^{1/2}/  p^{  \ell/2}$. This proves Proposition \ref{proposition 1}.

 \section{Proof of Theorem \ref{theorem 2} }

 In order  to prove Theorem \ref{theorem 2}, we shall first establish the following bound. 

\begin{proposition}
We have 
\begin{align*}
S(N) & \ll \begin{cases}
  N^{1+\varepsilon}    \ \  \ \ \  \ \ \ \ \ \ \   \textrm{if }  \ \ \ \   1\leq N \ll P^{3/4+ \varepsilon} \\ 
   P^\varepsilon N^{\frac{1}{2} } \left( P^{1/4} P_1^{1/4} + P^{1/2} P_1^{ - 1/4} \right) \  \ \ \ \     \textrm{if }  \ \ \ \  P^{3/4+ \varepsilon}   \ll N \ll P^{1+\varepsilon}, 
\end{cases}
\end{align*} where $P = P_1 P_2$.

\end{proposition}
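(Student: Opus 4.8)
The plan is to mirror, step for step, the proof of Proposition~\ref{proposition 1}, with $P_1$ now playing the role of the conductor-lowering modulus $p^{\ell}$, with $P_2$ playing the role of the complementary factor $p^{r-\ell}$, and with $P=P_1P_2$ in place of $p^{r}$; the one structural change is that the $p$-adic stationary-phase evaluations of character sums used in the prime-power case are replaced by Weil's bound (Lemma~\ref{weil}) for complete sums over the prime fields $\mathbb{F}_{P_1}$ and $\mathbb{F}_{P_2}$. As in the proof of Theorem~\ref{theorem 1}, the bound for $L(f\otimes\chi,1/2)$ reduces via the approximate functional equation to the stated estimate for the dyadic pieces $S(N)$ of~\eqref{SN}. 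For $1\le N\ll P^{3/4+\varepsilon}$ one inserts $|\lambda_f(n)|\ll n^{\varepsilon}$ into~\eqref{SN} to get $S(N)\ll N^{1+\varepsilon}$, the first case; so from now on assume $P^{3/4+\varepsilon}\ll N\ll P^{1+\varepsilon}$, in particular $N\gg P_1$.

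First I would run the circle method: write $S(N)=\sum\sum_{m,n}\lambda_f(n)\chi(m)\delta(n-m)V(n/N)V_1(m/N)$, and since $P_1$ divides the conductor of $\chi$, replace $\delta(n-m)$ by the congruence $n\equiv m\ (\mathrm{mod}\ P_1)$ together with $\delta((n-m)/P_1)$; then apply Lemma~\ref{circlemethod} to the surviving $\delta$ with $Q=(N/P_1)^{1/2}$ and detect the congruence modulo $P_1$ by additive characters $b\bmod P_1$. This produces a bilinear form in which the $n$-sum carries $\lambda_f(n)e((\overline a+bq)n/P_1q)e(\pm xn/aP_1q)$ and the $m$-sum carries $\chi(m)e(-(\overline a+bq)m/P_1q)e(\mp xm/aP_1q)$. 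Next I would apply Poisson summation (Lemma~\ref{poisson}) to the $m$-sum, completing it modulo $\mathrm{lcm}(P_1P_2,\,P_1q)=Pq$ (the few $q$ divisible by $P_1$ or $P_2$ being dealt with separately): integration by parts confines the dual variable to $|m|\ll P^{\varepsilon}PQ/N$ and localises the $x$-integral to length $\ll q/Q$, while the CRT splits the complete character sum modulo $Pq$ into a Gauss sum of size $P_2^{1/2}$ at $P_2$, a Gauss sum of size $P_1^{1/2}$ at $P_1$ (for all $b$ except the one with $\overline a+bq\equiv 0\ (\mathrm{mod}\ P_1)$, handled separately and more easily), and a sum at $q$ that vanishes unless a linear congruence pins down $a$ modulo $q$. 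Then I would apply the Voronoi summation formula (Lemma~\ref{voronoi Maass}, or Lemma~\ref{voronoi hol} in the holomorphic case) to the $n$-sum, whose conductor is $P_1q$: its dual is supported on $n\ll P_1P^{\varepsilon}$ with Bessel weight $\mathcal{J}\ll P_1q/\sqrt{nN}$ as in~\eqref{bound of mathcal J x q n}, and a square-root-cancellation estimate for the complete $b$-sum leaves us on the boundary, with no improvement over the trivial bound $N^{1+\varepsilon}$.

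To go below the boundary I would, just as for Proposition~\ref{proposition 1}, apply Cauchy--Schwarz to the $n$-sum to remove $\lambda_f(n)$ by Lemma~\ref{rankin Selberg bound} (which squares the saving still required), split the $n$-sum dyadically into length $L\ll P_1$, open the square, and apply Poisson summation once more to the resulting $n$-sum, completed modulo $qq'P_1$. The dual variable then runs over $n\ll P^{\varepsilon}qq'P_1/L$, the governing integral is $\ll P_1^{2}qq'/(LN)$, and the frequency congruence modulo $qq'P_1$ separates into a congruence modulo $qq'$ — which, after writing $n$ out explicitly, leaves only $O(P^{\varepsilon})$ admissible $n$ — and a congruence modulo $P_1$ linking $\alpha=a+bq$ to $\alpha'=a'+b'q'$. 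In the off-diagonal range, substituting $\alpha'$ from the latter reduces the inner sum to
\[
\sideset{}{^\star}\sum_{\alpha\bmod P_1}\overline{\chi_1}\bigl(m-\alpha P_2\bigr)\,\chi_1\bigl(m'+\alpha q\,\overline{n+q'}\,P_2\bigr),
\]
which, after pulling out a constant of modulus $1$, is a character sum $\sideset{}{^\star}\sum_{\alpha\bmod P_1}\chi_1\!\left(\frac{A+\alpha B}{C-\alpha}\right)$ of a fractional-linear function over $\mathbb{F}_{P_1}$; by Lemma~\ref{weil} it is $\ll P_1^{1/2+\varepsilon}$ unless the map is constant, in which case the resulting extra relation among $m,m',q,q',n$ is absorbed into the diagonal, where the inner $\alpha$-sum is trivial and only the two congruences need counting. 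Collecting the diagonal and off-diagonal contributions, performing the sums over $q,q'\le Q$, over $m,m'\ll PQ/N$, over the $O(P^{\varepsilon})$ frequencies $n$ and the $O(\log P)$ dyadic scales, and inserting the Gauss-sum size $P^{1/2}$ and $Q^{2}=N/P_1$, then yields
\[
S(N)\ll P^{\varepsilon}N^{1/2}\bigl(P^{1/4}P_1^{1/4}+P^{1/2}P_1^{-1/4}\bigr),
\]
which is the second case; the two terms are the diagonal and off-diagonal contributions, and they balance when $P_1\asymp P^{1/2}$ — precisely the regime in which Theorem~\ref{theorem 2} attains the Burgess exponent $3/8$.

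I expect the main obstacle to be this last step: extracting genuine square-root cancellation from the fractional-linear character sum over $\mathbb{F}_{P_1}$, identifying its degenerate locus correctly, and checking that this locus together with the true diagonal contributes no more than the off-diagonal estimate. Everything else — control of the oscillatory integrals $\mathcal{I}$, $\mathcal{J}$ and $U_1$ coming from the two Poisson steps and the Voronoi step, and the attendant congruence bookkeeping — is routine and of the same nature as in Proposition~\ref{proposition 1}, the only conceptual difference being that the character sums are now evaluated by the Weil bound over prime fields rather than by $p$-adic stationary phase.
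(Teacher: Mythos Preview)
Your plan is correct and the architecture --- Kloosterman circle method with conductor lowering modulo $P_1$, Poisson on $m$, Voronoi on $n$, Cauchy--Schwarz, a second Poisson, and a Weil bound (Lemma~\ref{weil}) for the resulting complete sum over $\mathbb{F}_{P_1}$ --- is exactly that of the paper. There is, however, one genuine difference in the set-up. You keep the full character $\chi=\chi_1\chi_2$ on the $m$-side, mirroring Proposition~\ref{proposition 1} verbatim, so that after Poisson only $a\bmod q$ is determined and the variable $b\bmod P_1$ survives into the Voronoi and Cauchy steps. The paper instead separates the oscillations as
\[
S(N)=\mathop{\sum\sum}_{m,n}\lambda_f(n)\,\chi_1(n)\,\chi_2(m)\,\delta(n-m)\,V(n/N)V_1(m/N),
\]
keeping $\chi_1$ attached to the Fourier coefficients. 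In the paper's route the Poisson step on $m$ (whose only multiplicative weight is $\chi_2$) produces a congruence $\overline a+bq+m\overline{P_2}\equiv 0\pmod{P_1q}$ that pins down \emph{both} $a\bmod q$ and $b\bmod P_1$; the free mod-$P_1$ variable is then reintroduced by opening $\chi_1(n)=\tau(\overline{\chi_1})^{-1}\sum_{\alpha\bmod P_1}\overline{\chi_1}(\alpha)e(\alpha n/P_1)$ immediately before Voronoi. After Cauchy and the second Poisson both routes arrive at a complete sum of $\chi_1$ of a low-degree nonconstant rational function over $\mathbb{F}_{P_1}$, bounded by $P_1^{1/2}$, and both give the stated estimate. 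Your route is the more literal transcription of the prime-power argument and avoids the Gauss-sum opening of $\chi_1$; the paper's route buys a cleaner first Poisson (no $b$-sum to carry through) and yields a final rational function of slightly different shape (quadratic over linear rather than the fractional-linear form you wrote), but this is immaterial for the Weil bound and for the endgame bookkeeping.
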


\subsection{Application of circle method} 

As in Theorem \ref{theorem 1}, we shall analyse the sum $S(N)$ for the  case  of the Maass forms (holomorphic case is similar, even simpler). We first separate the oscillation of the Fourier coefficients $\lambda_f(n)$ and $\chi(n)$ using the delta symbol. We write $S(N)$ as 

\begin{align*}
S(N)=   \mathop{\sum \sum}_{ {m, n=1} }^\infty \lambda_f (n)  \chi_1(n) \chi_2(m) \delta \left( n -m \right) V\left( \frac{n}{N} \right) V_1\left( \frac{m}{N} \right),  
\end{align*}
 where $V_1(y)$ is another smooth function, supported on the interval $[1/2, 3]$, $V_1(y) \equiv 1$ for $y \in [1,2]$ and satisfies $y^j V^{(j)} \ll _j 1.$ To analyse the sum $S(N)$ we  shall use the conductor lowering mechanism,  as used in Theorem 1.  The integral equation $n = m$ is equivalent to the congruence $ n \equiv m (\textrm{mod} \ P_1)$ and the integral equation $(n-m)/P_1 = 0$. We now have
 \[
 S(N) := \mathop{\sum \sum}_{ \substack {{m, n=1} \\  P_1 \mid (n-m) }}^\infty \lambda_f (n)  \chi_1(n) \chi_2(m) \delta \left( \frac{n -m}{P_1} \right) V\left( \frac{n}{N} \right) V_1\left( \frac{m}{N} \right). 
 \]  Now using Lemma \ref{circlemethod}  for the expression of $\delta(n)$, we have 
 
\[
S(N)= S_3^+(N) + S_3^-(N),
\] with
\begin{align*} 
S_3^{ \pm} (N) &=   \int_0^1  
\mathop{\sum  \sideset{}{^\star}\sum}_{1\leq q\leq Q < q \leq q+Q}  \frac{1}{aq} \mathop{\sum \sum}_{ \substack {{m, n=1} \\  P_1 \mid (n-m) }}^\infty  \lambda_f(n) \chi_1(n) \chi_2(m) \notag\\
 & \hspace{1cm} \times e\left( \pm \frac{ \overline{a}(n-m)/P_1}{q} \mp \frac{x (n-m)/P_1 }{ aq}\right) V\left( \frac{n}{N} \right) V_1\left( \frac{m}{N} \right) dx. 
\end{align*}  We choose $Q=(N/P_1)^{1/2}$. We detect the congruence relation $n \equiv m (\textrm{mod} \ P_1)$ in the above expression using the exponential sum to get

\begin{align*}
S_3^{ \pm} (N) &=    \int_0^1  
\mathop{\sum  \sideset{}{^\star}\sum}_{1\leq q\leq Q < q \leq q+Q}  \frac{1}{aq P_1}  \sum_{b ( P_1)}\mathop{\sum \sum}_{ \substack {{m, n=1} }}^\infty  \lambda_f(n) \chi_1(n) \chi_2(m) \notag\\
 & \hspace{1cm} \times e\left( \pm \frac{ (\overline{a}+ bq) (n-m)}{P_1 q} \right) e\left( \mp \frac{ x (n-m)}{ a P_1 q} \right) V\left( \frac{n}{N} \right) V_1\left( \frac{m}{N} \right) dx. 
\end{align*} In the rest of the paper we will analyse the sum $S_3^{+}(N)$ ( analysis of $S_3^{-}(N)$ is just similar). We have

 \begin{align} \label{splus} 
S_3^{ +} (N) &=   \int_0^1  
\mathop{\sum  \sideset{}{^\star}\sum}_{1\leq q\leq Q < q \leq q+Q}  \frac{1}{aq P_1}  \sum_{b ( P_1)} \left\lbrace\sum_{n=1}^\infty  \lambda_f(n) \chi_1(n)  e\left(  \frac{ (\overline{a}+ bq) n}{P_1 q} \right) e\left(  \frac{ -x n}{  P_1 a q} \right)  V\left( \frac{n}{N} \right) \right\rbrace \notag\\  
 & \hspace{1cm}    \left\lbrace\sum_{m=1}^\infty   \chi_2(m)  e\left( - \frac{ (\overline{a}+ bq) m}{P_1 q} \right) e\left(  \frac{ m x }{  P_1 a q} \right)  V_1\left( \frac{m}{N} \right) \right\rbrace dx.  
\end{align}

\subsection{Appying Poisson summation Formula}
We shall now apply the Poisson summation formula to sum over $m$ in equation  \eqref{splus} as follows. Writing $m=\beta + l P_1 P_2 q$ and then applying the Poisson summation formula to the sum over $l $ we have 

\begin{align*}
& \sum_{m=1}^\infty   \chi_2(m)  e\left( - \frac{ (\overline{a}+ bq) m}{P_1 q} \right) e\left(  \frac{ m x }{  P_1 a q} \right)  V_1\left( \frac{m}{N} \right) \\
 &  = \sum_{\beta(P_1 P_2 q)} \chi_2(\beta) e\left( - \frac{ (\overline{a}+ bq) \beta}{P_1 q} \right) \sum_{ l \in \mathbb{Z}}  V_1\left( \frac{ \beta + l P_1 P_2 q }{N} \right)   e\left(  \frac{ (\beta + l P_1 P_2 q) x }{  P_1 a q} \right) \\
  &  = \sum_{\beta(P_1 P_2 q)} \chi_2(\beta) e\left( - \frac{ (\overline{a}+ bq) \beta}{P_1 q} \right) \sum_{ m \in \mathbb{Z}} \int_{\mathbb{R}} V_1\left( \frac{ \beta + y P_1 P_2 q }{N} \right)   e\left(  \frac{ (\beta + y P_1 P_2 q) x }{  P_1 a q} \right) e(-my) dy. 
\end{align*} We now apply the change of variable $ ( \beta + y P_1 P_2 q)/N = z $ to obtain

\begin{align} \label{first poisson}
&  \frac{N}{P_1 P_2 q}\sum_{ m \in \mathbb{Z}} \sum_{\beta(P_1 P_2 q)} \chi_2(\beta) e\left( - \frac{ (\overline{a}+ bq) \beta}{P_1 q}  + \frac{m \beta}{ P_1 P_2 q}\right) \int_{\mathbb{R}} V_1 (y)  e\left(  \frac{  N x y }{  P_1 a q} \right) e\left(  \frac{ - N m y }{  P_1 P_2 q} \right) dy  \notag \\
&:= \frac{N}{P_1 P_2 q} \sum_{ m \in \mathbb{Z}} \mathfrak{C} (b , q) \mathfrak{I} (x , q, m), 
\end{align} 
where $ \mathfrak{C} (b , q)$ is a character sum and $\mathfrak{I} (x , q, m) $ is the integral in the above expression. Writing $\beta = a_2 P_1 q \overline{P_1 q} + a_1 P_2 \overline{P_2}$, where $ a_2 (\textrm{mod} \  P_2)$, $a_1 (\textrm{mod} \ P_1 q) $  and $ P_1 q \overline{P_1 q} \equiv 1 (\textrm{mod} \ P_2)$, the above character sum can be written as 
\begin{align*}
& \sum_{a_1 (P_1 q)} e \left(a_1 \frac{ ((\overline{a} + bq )P_2 +m ) \overline{P_2}}{ P_1 q}\right) \sum_{a_2 (P_2)} \chi_2(a_2) e \left(a_2 \frac{ ((\overline{a} + bq )P_2 +m ) \overline{P_1 q}}{ P_2} \right) \\
&= \sum_{a_1 (P_1 q)} e \left( a_1\frac{ (\overline{a} + bq  +m  \overline{P_2}  )}{ P_1 q}\right) \sum_{a_2 (P_2)} \chi_2(a_2) e \left( a_2 \frac{  m  \overline{P_1 q}}{ P_2} \right) \\
& = \begin{cases}
  P_1 q \ \  \tau_{\chi_2} \ \   \chi_2 (\overline{m} P_1 q)  \ \  \ \ \   \textrm{if }  \ \ \ \ \ \overline{a} + bq  +m  \overline{P_2} \equiv 0 ( \textrm{mod} \ P_1 q ), \ \ \   P_2 \nmid m \\ 
0   \ \ \ \     \textrm{otherwise} .
\end{cases}
\end{align*}
From the congruence relation, $a (\textrm{mod} \ q)$  and $b (\textrm{mod} \ P_1)$ can be determined.  Next we consider the integral in equation \eqref{first poisson}. Integrating by parts $j$-times and using $ V_1^{(j)} (y) \ll 1$ we have
\begin{align*}
\mathfrak{I} (x , q, m) \ll \left(1 + \frac{  N x  }{  P_1 a q} \right)^{ j} \left(  \frac{P_1 P_2 q}{ Nm}\right)^j. 
\end{align*} 
We observe that $ \mathfrak{I} (x , q, m)$ is negligibly small if $ m \gg (\sqrt{P_1} P_2 P^\varepsilon)/ \sqrt{N}$, i.e. if $m\gg P^\varepsilon P_2 / Q $.  We record this result in the following lemma

\begin{lemma} We have 
 
 \begin{align}  
S_3^{ +} (N) &=  \int_0^1  
\sum_{1\leq q\leq Q}  \frac{1}{q P_1}  \left\lbrace\sum_{n=1}^\infty  \lambda_f(n) \chi_1(n)  e\left(  \frac{ (\overline{a}+ bq) n}{P_1 q} \right) e\left(  \frac{- x n}{  P_1 a q} \right) V\left( \frac{n}{N} \right)  \right\rbrace  \notag\\  
 & \hspace{1cm}  \times \left\lbrace \frac{ \tau_{\chi_2} \ N  \chi_2 ( P_1 q)  }{ P_2 }  \sum_{m \ll \frac{P_2}{Q} P^{\varepsilon} }    \    \chi_2 (\overline{m} )   \frac{\mathfrak{I} (x , q, m)}{a} \right\rbrace dx + O_A \left( P^{-A}\right),  
\end{align}  
 for any positive constant $A >0$. 
 
\end{lemma}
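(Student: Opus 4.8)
The plan is to start from \eqref{splus} and carry the Poisson summation in the $m$-variable through exactly as in the lines preceding \eqref{first poisson}, then to substitute the explicit value of the complete character sum $\mathfrak{C}(b,q)$ and the decay bound for the integral $\mathfrak{I}(x,q,m)$.

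First I would note that, after writing $m=\beta+\ell P_1P_2q$, applying Lemma~\ref{poisson} to the $\ell$-sum and changing variables $(\beta+yP_1P_2q)/N=z$, the inner $m$-sum in \eqref{splus} turns into $\tfrac{N}{P_1P_2q}\sum_{m\in\mathbb Z}\mathfrak{C}(b,q)\,\mathfrak{I}(x,q,m)$, as recorded in \eqref{first poisson}. The evaluation of $\mathfrak{C}(b,q)$ just performed shows that it vanishes unless $P_2\nmid m$ and $\overline a+bq+m\overline{P_2}\equiv 0\pmod{P_1q}$, in which case $\mathfrak{C}(b,q)=P_1q\,\tau_{\chi_2}\,\chi_2(\overline m\,P_1q)$. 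Reading this congruence modulo $q$ and modulo $P_1$ separately — which is legitimate because $q\le Q$ is coprime to the primes $P_1,P_2$ in the relevant parameter range — pins down $b\pmod{P_1}$ and restricts $a$ to a fixed residue class modulo $q$ (and forces $(m,q)=1$); thus the summation over $b\,(P_1)$ is removed and the Kloosterman-type summation over $a$ in $q<a\le q+Q$ is cut down accordingly, as reflected by the lone symbol $a$ in the statement. Collecting the constants via
\[
\frac{1}{aqP_1}\cdot\frac{N}{P_1P_2q}\cdot P_1q=\frac{N}{aqP_1P_2}
\]
and factoring $\chi_2(\overline m\,P_1q)=\chi_2(P_1q)\,\chi_2(\overline m)$ then reproduces the shape displayed in the Lemma, with the $n$-sum carried along untouched.

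It remains only to truncate the dual $m$-sum, which is exactly the estimate displayed just above the statement: integrating by parts $j$ times in $\mathfrak{I}(x,q,m)$ and using $V_1^{(j)}\ll_j1$, $a\asymp Q$ and $N/P_1=Q^2$ gives $\mathfrak{I}(x,q,m)\ll_j(1+Nx/(P_1aq))^j(P_1P_2q/(Nm))^j$, so the contribution of $m\gg P^\varepsilon P_2/Q$ is $O_A(P^{-A})$ for every $A>0$, and the same integration by parts also absorbs the part of the $x$-integral lying outside the effective range, so no cutoff in $x$ need be written down yet. Putting the pieces together yields the identity. I do not anticipate any real obstacle: Poisson summation (Lemma~\ref{poisson}), the elementary Gauss-sum evaluation and integration by parts in a stationary-phase-free integral are all already available. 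The two points that will need a little care are the coprimality bookkeeping $(q,P_1P_2)=1$ licensing the Chinese Remainder decompositions of the $\beta$-sum and of the determining congruence, and keeping track of the normalisation so that the constant comes out exactly as $\tau_{\chi_2}N\chi_2(P_1q)/(aqP_1P_2)$.
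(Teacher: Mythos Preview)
Your proposal is correct and follows essentially the same route as the paper: Poisson in $m$ via \eqref{first poisson}, the explicit evaluation of $\mathfrak{C}(b,q)$ just above (which fixes $a\bmod q$ and $b\bmod P_1$ through $\overline a+bq+m\overline{P_2}\equiv 0\pmod{P_1q}$), collecting the constant $\tfrac{1}{aqP_1}\cdot\tfrac{N}{P_1P_2q}\cdot P_1q=\tfrac{N}{aqP_1P_2}$, and truncating $m\ll P^\varepsilon P_2/Q$ by repeated integration by parts in $\mathfrak{I}(x,q,m)$. The only bookkeeping you flag---coprimality of $q$ with $P_1P_2$---is indeed the one point to watch, and is handled in the paper by the parameter ranges (with the finitely many exceptional $q$ absorbed in the error term).
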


\subsection{Applying Voronoi summation formula}
 Substituting the following identity
\[
\chi_1(n) = \frac{1}{\tau ( \overline{\chi_1} ) } \sum_{ \alpha (P_1)} \overline{ \chi_1} (\alpha) e\left(\frac{\alpha n}{P_1} \right), 
\] we have

\begin{align} \label{s 4 n}
 & \sum_{n=1}^\infty  \lambda_f(n) \chi_1(n)  e\left( \frac{ (\overline{a}+ bq) n}{P_1 q} \right) e\left(  \frac{ -x n}{  P_1 a q} \right)  V\left( \frac{n}{N} \right)  =  \frac{1}{\tau ( \overline{\chi_1} ) } \sum_{ \alpha (P_1)} \overline{ \chi_1} (\alpha)  \sum_{n=1}^\infty  \lambda_f(n)  \notag\\
 &  \hspace{2cm} \times e\left(  \frac{ (\overline{a}+ bq + \alpha q) n}{P_1 q} \right) e\left(  \frac{ -x n}{  P_1 a q} \right)  V\left( \frac{n}{N} \right)  :=  \frac{1}{\tau ( \overline{\chi_1} ) } \sum_{ \alpha (P_1)} \overline{ \chi_1} (\alpha)  S_4 (N).
\end{align} 

 We have $ (\overline{a}+ bq + \alpha q, q) = 1$. Given $a, $ and $b$ there exists at-most one $\alpha$ such that $ \overline{a}+ bq + \alpha q \equiv 0 (\textrm{mod} \  P_1 )$.  For the rest of $\alpha$ we apply the  Voronoi summation formula to the sum over $n$(In the case where $ \overline{a}+ bq + \alpha q \equiv 0 (\textrm{mod} \  P_1 )$, we first take out the power of $P_1$ from $ \overline{a}+ bq + \alpha q $, and then proceed as bellow. In this case, since conductor is smaller that $P_1 q$, the Voronoi summation formula gives us more saving. Also in this case $\alpha$ is uniquely determined, which gives us  saving over  $\alpha$ summation). We substitute $g(n)=  e(-nx/ P_1aq) V(n/N)$ in Lemma \ref{voronoi Maass} to get

\begin{align*}
S_4^{\pm} (N) :=   \frac{1}{P_1 q}    \sum_{\pm} \sum_{n\geq 1} \lambda_f(\mp n) e\left( \pm  n \frac{ \overline{\overline{a}+ bq + \alpha q } }{P_1 q} \right) H^{\pm} \left( \frac{ n}{q^2}, \frac{ N x}{aq}\right), 
\end{align*} where  $ H^{\pm}$  are given in Lemma \ref{voronoi Maass}.  We shall estimate  $S_4 ^{-}(N) $ ( $ S_4 ^{+}(N)$ is similar). We have
\begin{align*}
  H^{-} \left( \frac{ n}{q^2}, \frac{ N x}{P_1aq}\right) =  \int_0^\infty e\left(- \frac{xy}{P_1 a q} \right)  V \left( \frac{y}{N}\right) \left\lbrace  Y_{2 i \nu} + Y_{ - 2 i \nu}\right\rbrace \left( \frac{4 \pi \sqrt{ny}}{P_1 q }\right)  dy.
\end{align*}
 Applying the change of variable $y/ N= z$, we have

\begin{align} \label{I x n q} 
 H^{-} \left( \frac{ n}{q^2}, \frac{ N x}{P_1aq}\right) =   N\int_0^\infty e\left(- \frac{Nxy}{P_1 a q} \right)   V \left( y\right) \left\lbrace  Y_{2 i \nu} + Y_{ - 2 i \nu}\right\rbrace \left( \frac{4 \pi \sqrt{n Ny}}{P_1 q }\right)  dy : = N \ I (x, n, q), 
\end{align}  
where $ I (x, n, q)$ denotes the integral in above equation. Substituting the asymptotic  for  $ U_{\pm2 i\nu}(x)$ given in equations \eqref{bessel} and \eqref{parti der of bessel 2}, we have the first term of integral in equation \eqref{I x n q} is equal to (estimation of second term is similar)
 
\begin{align} \label{I plus minus}
I^{\pm} (x, n) := \int_0^\infty e^{ i \left( - \frac{ 2 \pi N x y}{P_1 a q } \pm i \frac{4 \pi \sqrt{n Ny}}{P_1 q }  \right) } V(y)  U_{2 i\nu}^\pm \left( \frac{4 \pi \sqrt{n Ny}}{P_1 q } \right) dy, 
\end{align} 
where $U^+(y) = U(y)$ and $U^-(y) = \overline{U}(y)$. We observe that the integral $I^{-} (x, n)$ has no stationary point. By equation \eqref{unstationary},  $I^{-} (x, n)$ is negligibly small. For $I^{+} (x, n)$ we apply second statement of Lemma \ref{exponential inte} with 
\[
f(y) = - \frac{ 2 \pi N x y}{P_1 a q } + i \frac{4 \pi \sqrt{n Ny}}{P_1 q } \  \ \ \textrm{and} \ \ \ \ \  g(y) =  V(y)  U_{2 i\nu}^\pm \left( \frac{4 \pi \sqrt{n Ny}}{P_1 q } \right). 
\]
We have 
\begin{align*}
f^\prime(y)= - \frac{ 2 \pi N x }{P_1 a q }  + \frac{2 \pi \sqrt{n N}}{ \sqrt{y} P_1 q }, \ \ \ \  f^{\prime \prime}(y) = - \frac{ \pi \sqrt{n N}}{ y^{3/2} P_1 q }.
\end{align*} We observe that if we denote the stationary point by $ y_0$ then we have
\[
|f^{\prime \prime}(y_0)| \asymp  \frac{  \sqrt{n N}}{  P_1 q } \ \ \ \ \textrm{as} \ \ y_0 \asymp 1.
\] 
Using  $ U_{\pm2 i\nu} (x) \ll_{  \nu} (1+x)^{-1/2}$, and applying the second statement of the Lemma \ref{exponential inte}, we have

\begin{align} \label{bound I x n q}
I(x, n, q) \ll \frac{P_1 q }{ \sqrt{n N}}, 
\end{align}
where $ I(x, n, q) $ is given in equation \eqref{I x n q}.  Also, integrating by parts we have
 
 \[
 I^{\pm} \ll_j \left( \frac{Nx}{P_1 a q} + 1\right)^j \left( \frac{P_1 q}{\sqrt{nN}} \right)^j. 
 \] 
 Hence integral is negligibly small if 
\[
\frac{P_1 q }{ \sqrt{n N}} \ll P^{-\epsilon} \Rightarrow n \gg P^\varepsilon P_1, 
\] 
as $q\leq Q = N^{1/2}/P_1^{1/2}.$ We record this result in the following lemma. After the application of the Poisson and the Voronoi summation formula we have the following expression for $S_3^+(N)$. 

\begin{lemma} We have

\begin{align*} 
S_3^+(N) & =  \int_0^1  \sum_{1\leq q\leq Q}  \frac{1}{ q P_1} \frac{N}{P_2} \tau(\chi_2)\chi_2 (P_1 q)  \sum_{m \ll \frac{P_2}{Q} P^{\varepsilon} }    \    \chi_2 (\overline{m} )   \frac{\mathfrak{I} (x , q, m)}{a}  \frac{1}{ \tau ( \overline{\chi_1} )} \sideset{}{^\star}\sum_{ \alpha (P_1)} \overline{ \chi_1} (\alpha) \\
& \times \frac{N}{P_1 q}  \sum_{n\ll P_1} \lambda_f(n) e\left( -  n \frac{ \overline{ m\overline{P_2}+  \alpha q } }{P_1 q} \right)  I(x, n, q)  + O_A\left( P^{-A}\right),
\end{align*}
where $A>0$ is any positive constant. 
\end{lemma}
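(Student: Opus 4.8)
The plan is to assemble the two preceding subsections into a single clean formula. I start from the displayed lemma for $S_3^+(N)$ obtained after the first Poisson summation in $m$ (the one just before \eqref{s 4 n}), in which the sum over $n$ still carries the twist $\chi_1(n)$ together with the phase $e((\overline{a}+bq)n/(P_1q))\,e(-xn/(P_1aq))$ and the constant $\tau(\chi_2)\chi_2(P_1q)N/(P_1P_2q)$, while $m$ is already truncated at $m\ll P^\varepsilon P_2/Q$ and the congruence $\overline{a}+bq\equiv m\overline{P_2}\ (\mathrm{mod}\ P_1q)$ produced by the character sum $\mathfrak{C}(b,q)$ fixes $a\ (\mathrm{mod}\ q)$ and $b\ (\mathrm{mod}\ P_1)$, collapsing the $b$-sum.

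First I open $\chi_1(n)$ via the Gauss-sum identity $\chi_1(n)=\tau(\overline{\chi_1})^{-1}\sum_{\alpha(P_1)}\overline{\chi_1}(\alpha)\,e(\alpha n/P_1)$, which merges the modulus-$P_1$ part of the $n$-phase into the single additive character $e((\overline{a}+bq+\alpha q)n/(P_1q))$ at the cost of a sum over $\alpha\ (\mathrm{mod}\ P_1)$ and the factor $\tau(\overline{\chi_1})^{-1}$; this is exactly \eqref{s 4 n}. Next I apply the Voronoi summation formula of Lemma \ref{voronoi Maass} to the resulting smooth sum $S_4(N)$ over $n$, with test function $g(n)=e(-nx/(P_1aq))\,V(n/N)$ and modulus $P_1q$. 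In the degenerate range $\overline{a}+bq+\alpha q\equiv 0\ (\mathrm{mod}\ P_1)$ one first extracts the power of $P_1$ from the argument, which only decreases the conductor (hence increases the saving) and pins $\alpha$ to a single residue; that case is treated separately and is no harder. Away from it, Voronoi yields $\sum_{\pm}\sum_{n\ge 1}\lambda_f(\mp n)\,e(\pm n\,\overline{\overline{a}+bq+\alpha q}/(P_1q))\,H^\pm(n/q^2,Nx/(aq))$, and after rescaling as in \eqref{I x n q} one has $H^\pm=N\,I(x,n,q)$; invoking the congruence for $\overline{a}+bq$ then rewrites the dual phase as $e(-n\,\overline{m\overline{P_2}+\alpha q}/(P_1q))$.

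It remains to truncate the $n$-sum and collect constants. The minus-type integral $I^-$ has no stationary point, so by \eqref{unstationary} it is negligible; for $I^+$ the second part of Lemma \ref{exponential inte} together with $U_{\pm 2i\nu}(x)\ll(1+x)^{-1/2}$ gives $I(x,n,q)\ll P_1q/\sqrt{nN}$ as in \eqref{bound I x n q}, and repeated integration by parts (using $q\le Q=(N/P_1)^{1/2}$) forces $n\ll P^\varepsilon P_1$ up to an $O_A(P^{-A})$ error. Multiplying the constant $\tau(\chi_2)\chi_2(P_1q)N/(P_1P_2q)$ carried from the Poisson step by the $\tau(\overline{\chi_1})^{-1}$ from splitting $\chi_1$ and the $N/(P_1q)$ from Voronoi, and noting that the $b$-sum has collapsed to the single determined value, one arrives at the displayed formula. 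I expect the only real difficulty to be the bookkeeping: keeping the three interlocking moduli $P_1$, $P_2$, $q$ straight inside every additive character, and checking that both degenerate cases ($P_2\mid m$, excluded by $\mathfrak{C}(b,q)$, and $\overline{a}+bq+\alpha q\equiv 0\ (\mathrm{mod}\ P_1)$, handled separately) contribute only $O_A(P^{-A})$ or can be folded into the stated shape.
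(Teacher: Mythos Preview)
Your proposal is correct and follows essentially the same route as the paper: open $\chi_1(n)$ by the Gauss-sum identity to merge the additive phase into $e((\overline{a}+bq+\alpha q)n/(P_1q))$, apply Voronoi with modulus $P_1q$ (treating the degenerate $\alpha$ with $P_1\mid\overline{a}+bq+\alpha q$ separately), use the stationary-phase bound \eqref{bound I x n q} and integration by parts to truncate the dual sum at $n\ll P^\varepsilon P_1$, and then replace $\overline{a}+bq$ via the congruence from the Poisson step to obtain the stated phase. The only cosmetic slip is a sign: the Poisson character sum forces $\overline{a}+bq\equiv -m\overline{P_2}\pmod{P_1q}$ rather than $+m\overline{P_2}$, but this does not affect the shape of the formula or any subsequent estimate.
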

A trivial estimate gives us ( assuming cancellation in character sum $\alpha$) 
\begin{align*}
S_3^+(N) \ll \frac{1}{P_1 a} \frac{N}{P_2} \sqrt{P_2} Q \frac{1}{\sqrt{P_1}} \sqrt{P_1}  \frac{N}{P_1 q}  P_1  \ll N^{1+\varepsilon}. 
\end{align*} 
So we are on the boundary. To get additional  cancellation, we shall now apply the Cauchy inequality and the Poisson summation formula.  Interchanging the order of summation and using the bound for $I(x, n, q)$ given in equation \eqref{bound I x n q}, we have 
 
\begin{align*}
 S_3^+(N) & =  \frac{N}{  P_1 P_2} \chi_2 (P_1 ) \tau(\chi_2) \frac{1}{ \tau ( \overline{\chi_1} )}  \frac{N}{P_1 }   \int_0^1  \sum_{1\leq q\leq Q}  \frac{1}{q^2}  \sum_{n\ll P_1} \lambda_f(n)    I(x, n, q)   \\
& \times   \sum_{m \ll \frac{P_2}{Q} P^{\varepsilon} }    \    \chi_2 (\overline{m} )   \frac{\mathfrak{I} (x , q, m)}{a}    \sideset{}{^*} \sum_{ \alpha (P_1)} \overline{ \chi_1} (\alpha) e\left( -  n \frac{ \overline{ m\overline{P_2}+  \alpha q } }{P_1 q} \right) dx + O_A\left( P^{-A}\right) \\ 
& \ll  C  \times  \int_0^1  \sum_{1\leq q\leq Q}  \frac{1}{q^2}  \sum_{n\ll P_1} \left|\lambda_f(n) \right| \left| I(x, n, q)\right| \\
&   \hspace{1cm} \times \left|\sum_{m \ll \frac{P_2}{Q} P^{\varepsilon} }  \chi_2 ( \overline{m})  \frac{\mathfrak{I} (x , q, m)}{a}    \sideset{}{^*} \sum_{ \alpha (P_1)} \overline{ \chi_1} (\alpha) e\left( -  n \frac{ \overline{ m\overline{P_2}+  \alpha q } }{P_1 q} \right)\right| \\
& \ll C  \frac{P_1}{\sqrt{N}} \sum_{1\leq q\leq Q}  \frac{1}{q}  \sum_{n\ll P_1} \frac{\left|\lambda_f(n) \right|}{\sqrt{n}}  |S_0(q, n)|,  
\end{align*}
 where $C= \frac{N}{  P_1 P_2} | \tau(\chi_2)| \frac{1}{| \tau ( \overline{\chi_1} )|}  \frac{N}{P_1 } ,$  and $ S_0(q, n)$ is the quantity inside the last modulus. We write $ C \frac{P_1}{\sqrt{N}} : = C_0$ Applying the Cauchy inequality  to the sum over $n$ we have

\begin{align} \label{S plus before second cauchy}
S_3^+(N)&\ll C_0  \int_0^1  \sum_{1\leq q\leq Q}  \frac{1}{q} \sum_{\substack{ L \ll P_1 \\ L \textrm{-} \textrm{dyadic}}} \left\lbrace  \sum_{n \in \mathbb{Z}}  \left| S_0(q, n)\right|^2  U\left( \frac{n}{L}\right) \right\rbrace^{1/2} \notag \\ 
 & :=  C_0  \int_0^1  \sum_{1\leq q\leq Q}  \frac{1}{q} \sum_{\substack{ L \ll P_1 \\ L \textrm{-} \textrm{dyadic}}}  \left\lbrace \hat{S}_0 (q) \right\rbrace^{1/2}. 
\end{align} 
Opening the absolute square  of $ S_0(q, n) $ and interchanging the order of summation, we get

\begin{align} \label{s hat q} 
 &\hat{S}_0 (q) = \sum_{m \ll \frac{P_2}{Q} P^{\varepsilon} } \sum_{m^\prime \ll \frac{P_2}{Q} P^{\varepsilon} }  \chi_2 ( \overline{m}) \overline{\chi_2 ( \overline{m^\prime})}  \frac{\mathfrak{I} (x , q, m)}{a}   \frac{\mathfrak{I} (x , q, m^\prime)}{ a^\prime}   \times \sideset{}{^*} \sum_{ \alpha (P_1)}\sideset{}{^*} \sum_{ \alpha^\prime (P_1)}   \chi_1 (\alpha) \overline{\chi_1 (\alpha^\prime)}  \ \ \  T, 
\end{align} where

\begin{align*}
T = \sum_{n\in \mathbb{Z}} e\left( -  n \frac{ \overline{ m\overline{P_2}+  \alpha q } }{P_1 q}  +  n \frac{  \overline{m^\prime \overline{P_2}+  \alpha^\prime q}  }{P_1 q} \right)  U\left( \frac{n}{L}\right). 
\end{align*}

\subsection{ Second application of Poisson summation formula}
Writing $n= \beta + l P_1 q$, $l \in \mathbb{Z}$ we have 

\begin{align*}
T &= \sum_{\beta (P_1 q)}  e\left( -  \beta  \frac{ \overline{ m\overline{P_2}+  \alpha q } }{P_1 q}  +  \beta \frac{ \overline{ m^\prime \overline{P_2}+  \alpha^\prime q}  }{P_1 q} \right) \sum_{l \in \mathbb{Z}}  U\left( \frac{\beta + l P_1 q}{L}\right) \\
&= \frac{L}{P_1 q} \sum_{n\in \mathbb{Z}} \sum_{\beta (P_1 q)}  e\left( -  \beta  \frac{ \overline{ m\overline{P_2}+  \alpha q } }{P_1 q}  +  \beta \frac{\overline{  m^\prime \overline{P_2}+  \alpha^\prime q } }{P_1 q}  + \frac{n}{P_1 q} \beta \right) \int_{\mathbb{R}} U(y) e \left( - \frac{nL y}{P_1 q}\right) dy. 
\end{align*} Integrating by parts we observe that the  integral in the above expression is negligible small if $n\gg (P_1 q P^\varepsilon)/L$. Evaluating the character sum we  obtain the congruence relation 

\begin{equation} \label{congruence in burgess}
\overline{ m\overline{P_2}+  \alpha q }   +    \overline{m^\prime \overline{P_2}+  \alpha^\prime q }   + n \equiv 0 \ ( \textrm{mod} \ P_1 q). 
\end{equation}
We obtain
\begin{align*}
T = L \sum_{\substack{ n\ll  (P_1 q P^\varepsilon)/L  \\  \overline{ m\overline{P_2}+  \alpha q }   +    \overline{m^\prime \overline{P_2}+  \alpha^\prime q }   + n \equiv 0 \ ( \textrm{mod} \ P_1 q)} }  \int_{\mathbb{R}} U(y) e \left( - \frac{nL y}{P_1 q}\right) dy + O_A \left(P^{-A} \right).
\end{align*}
   The above congruence relation modulo $q$ reduces to $ \overline{ m }   +    \overline{m^\prime  }   + n \overline{P_2} \equiv 0 \ ( \textrm{mod} \ q) \Rightarrow \overline{m^\prime  } \equiv - \overline{ m } -   n \overline{P_2} ( \textrm{mod} \ q).$ We now  solve $\alpha^\prime$ mod $P_1$ in terms of $\alpha$ from the above congruence relation.  Reducing the congruence relation given in equation \eqref{congruence in burgess} modulo $P_1$, we obtain
\begin{align*}
& m^\prime \overline{P_2} + \alpha^\prime q + m \overline{P_2} + \alpha q + n(m^\prime \overline{P_2} + \alpha^\prime q  ) ( m \overline{P_2} + \alpha q ) \equiv 0  \ (\textrm{mod} \ P_1). 
\end{align*} 
We now substitute the change of variable $  m \overline{P_2} + \alpha q = \beta $  to get
\begin{align} \label{relation beta alpha prime in thm 1}
& \alpha^\prime q ( 1+ n \beta) + \beta ( 1+ m^\prime \overline{P_2} n )  + m^\prime  \overline{P_2}  \equiv 0  \ (\textrm{mod} \ P_1) \notag \\ 
& \Rightarrow   \alpha^\prime  \equiv -  \frac{ \beta ( 1+ m^\prime \overline{P_2} n )  + m^\prime  \overline{P_2}}{( 1+ n \beta) q} \ (\textrm{mod} \ P_1). 
\end{align} Now the character summation over $ \alpha, \alpha^\prime$ reduces to 
\begin{align} \label{character sum in thm 2}
 \mathcal{B} := \sum_{\beta (\textrm{mod} \ P_1) }  \chi_1 \left( \frac{ \overline{ q} ( \beta - m \overline{P_2} )  ( 1+ n \beta) q}{ \beta ( 1+ m^\prime \overline{P_2} n )  + m^\prime  \overline{P_2}} \right) : =  \sum_{\beta (\textrm{mod} \ P_1) } \chi_1 (g(\beta)). 
\end{align} Since denominator of $g(\beta)$ is a linear polynomial in $\beta$, hence $g( \beta) \neq g_1^k(\beta)$ for any $ g_1(x) \in \overline{\mathbb{F}} (x)$. Applying Lemma \ref{weil}, we get $ \mathcal{B} \ll P_1^{1/2 + \varepsilon}$. We record this result in the following lemma:
\begin{lemma} \label{lemma character sum in thm 2 }
Let $ \mathcal{B}$ be as given in equation \eqref{character sum in thm 2}. We have $ \mathcal{B} \ll P_1^{1/2 + \varepsilon}$. 
\end{lemma}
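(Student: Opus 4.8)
The plan is to view $\mathcal B$ as a complete multiplicative character sum attached to a rational function of bounded degree over the prime field $\mathbb F_{P_1}$, and to invoke the Weil bound (Lemma \ref{weil}, with trivial additive factor). First I would clean up the argument of $\chi_1$. In the ranges relevant to the proof of Theorem \ref{theorem 2} one has $q\le Q=(N/P_1)^{1/2}\ll P^{1/4+\varepsilon}$, so $(q,P_1)=1$ (using $P_1\gg P^{1/2-\varepsilon}$) and $q\overline q\equiv 1\ (\mathrm{mod}\ P_1)$; likewise the earlier analysis forces $P_2\nmid m,m'$ while the sizes $m,m'\ll P_2P^{\varepsilon}/Q$ stay below $P_1$, so that $m\overline{P_2},\,m'\overline{P_2}\not\equiv 0\ (\mathrm{mod}\ P_1)$. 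Hence, modulo $P_1$,
\[
g(\beta)\equiv\frac{(\beta-m\overline{P_2})(1+n\beta)}{(1+m'\overline{P_2}\,n)\,\beta+m'\overline{P_2}}=:\frac{\mathcal N(\beta)}{\mathcal D(\beta)},\qquad \deg\mathcal N\le 2,\ \ \deg\mathcal D\le 1 .
\]

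The key point is that $\mathcal D$ has degree at most $1$. Writing $g=\mathcal N/\mathcal D$ in lowest terms, $g$ must then have a zero or a pole of multiplicity exactly $1$: a simple pole supplied by the linear $\mathcal D$ when the latter does not divide $\mathcal N$; or, in the opposite case (and also when $1+m'\overline{P_2}n\equiv 0$, which makes $\mathcal D$ a nonzero constant), a simple zero supplied by one of the two linear factors of $\mathcal N$. A rational function with a zero or pole of order exactly $1$ cannot equal $g_1(\beta)^k$ for any $g_1\in\overline{\mathbb F}_{P_1}(\beta)$, where $k\ge 2$ is the order of the primitive character $\chi_1$. Thus the non-degeneracy hypothesis of Lemma \ref{weil} is met (with $f\equiv 0$), and since $g$ has at most three zeros and poles together we get $|\mathcal B|\le (n_1-2)\sqrt{P_1}\ll P_1^{1/2}\le P_1^{1/2+\varepsilon}$.

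It remains to dispose of the genuinely degenerate configurations, which are precisely: $g$ identically constant, forcing $n\equiv 0$ and $m\equiv -m'\ (\mathrm{mod}\ P_1)$ (hence $m+m'=P_1$); and, when $k=2$ and $\mathcal D$ is a constant, $g$ a perfect square $c(\beta-r)^2$, forcing $m=m'$ together with $n$ in one fixed residue class modulo $P_1$. In either case $(m,m',n)$ satisfies a nontrivial congruence, so such triples form a subset of relative density $O(P^{-1+\varepsilon})$ inside the ranges of $m,m',n$; separating them off — together with the term $n=0$, which is of diagonal type and bounded trivially — shows their total contribution to $\hat{S}_0(q)$ is absorbed by the generic estimate, so the bound $\mathcal B\ll P_1^{1/2+\varepsilon}$ may be used throughout. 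I expect this bookkeeping of degenerate cases, and the clean verification that $\deg\mathcal D\le 1$ really does obstruct $k$-th-power structure, to be the only points that need care; once they are settled, the lemma is an immediate application of Weil's bound.
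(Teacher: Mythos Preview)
Your proposal is correct and follows essentially the same route as the paper: both arguments observe that the denominator $\mathcal D(\beta)$ is linear in $\beta$, so $g(\beta)$ cannot be a $k$-th power in $\overline{\mathbb F}_{P_1}(\beta)$, and then invoke the Weil bound (Lemma~\ref{weil}) with trivial additive part. Your treatment is in fact more careful than the paper's one-line justification, since you explicitly isolate and dispose of the degenerate configurations (constant $g$, and the quadratic-square case when $\chi_1$ has order two) that the paper passes over in silence.
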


Substituting the bound for $T$ and $ \mathcal{B}$  in equation \eqref{s hat q} we obtain


\begin{align}
\hat{S}_0 (q) &=  \mathop{ \sum_{m \ll \frac{P_2}{Q}  }  \sum_{m \ll \frac{P_2}{Q}  }  }_{  \overline{m^\prime  } \equiv - \overline{ m } -   n \overline{P_2} ( \textrm{mod} \ q)}  \chi_2 ( \overline{m}) \overline{\chi_2 ( \overline{m^\prime})}  \frac{\mathfrak{I} (x , q, m)}{a}  \frac{ \mathfrak{I} (x , q, m^\prime)}{a^\prime} \sideset{}{^*} \sum_{ \beta (P_1)}   \chi_1 (\beta) \overline{\chi_1 (\alpha^\prime)}   \notag \\
 & \hspace{1cm} \times L \sum_{ n\ll  (P_1 q N^\varepsilon)/L   }  \int_{\mathbb{R}} U(y) e \left( - \frac{nL y}{P_1 q}\right) dy  := \hat{S}_0 (q, D) + \hat{S}_0 (q, ND), 
\end{align} 
where $ \hat{S}_0 (q, D) $ (respectively $ \hat{S}_0 (q, ND) $) denotes the  contribution of the diagonal terms (respectively the off-diagonal terms), and $\beta$ and $\alpha^\prime$ are related by equation \eqref{relation beta alpha prime in thm 1}.  Using the bound for the character sum given in Lemma \ref{lemma character sum in thm 2 }, we have that the contribution of the off-diagonal terms is given by

\begin{align} \label{non diagonal in thm 1}
\hat{S}_0 (q, ND) &\ll \sum_{ n\ll  (P_1 q P^\varepsilon)/L   }   \mathop{ \sum_{m \ll \frac{P_2}{Q}  }  \sum_{m \ll \frac{P_2}{Q}  }  }_{  \overline{m^\prime  } \equiv - \overline{ m } -   n \overline{P_2} ( \textrm{mod} \ q)}  \left|\chi_2 ( \overline{m}) \overline{\chi_2 ( \overline{m^\prime})}  \frac{\mathfrak{I} (x , q, m) }{a} \frac{ \mathfrak{I} (x , q, m^\prime)}{a^\prime}  \right|   \notag\\
 & \hspace{1cm} \times P_1^{1/2 + \varepsilon}  L  \left| \int_{\mathbb{R}} U(y) e \left( - \frac{nL y}{P_1 q}\right) dy \right|  \notag\\
 & \ll P^\varepsilon \frac{P_1 q}{L} \ \frac{P_2}{ Q} \frac{P_2}{ Q} \frac{1}{ q} \  \frac{1}{a} \frac{1}{a^\prime}  P_1^{1/2 } L  \ll P^\varepsilon \frac{P_2^2  P_1^{3/2 }}{ Q^4} , 
\end{align}
 as $a, a^\prime \asymp Q$. Also the contribution of the diagonal terms($q = a^\prime$, $m= m^\prime$) is bounded by

\begin{align} \label{diagonal in thm 1}
\hat{S}_0 (q, D) &\ll \sum_{ n\ll  (P_1 q P^\varepsilon)/L   } \sum_{ \substack{m \ll \frac{P_2}{Q} \\ 2 \overline{ m } \equiv  -   n \overline{P_2} ( \textrm{mod} \ q) }} \left|\chi_2 ( \overline{m}) \right|^2 \left| \frac{\mathfrak{I} (x , q, m)}{a}   \right|^2 \sum_{\beta(P_1)} |\chi_1 (\beta)|^2  \notag\\
 & \hspace{1cm} \times L  \left| \int_{\mathbb{R}} U(y) e \left( - \frac{nL y}{P_1 q}\right) dy \right|  \notag\\
 & \ll P^\varepsilon \frac{P_1 q}{L} \ \frac{P_2}{Q}  \frac{1}{q}  \frac{1}{Q^2} P_1 L  \ll P^\varepsilon \frac{ P_1^2 P_2}{Q^3}, 
\end{align} as $a \asymp Q$. Substituting the bounds for the off-diagonal and the diagonal terms given in equations \eqref{non diagonal in thm 1} and \eqref{diagonal in thm 1} we obtain

\begin{align*}
\hat{S}_0 (q) \ll P^\varepsilon \left( \frac{ P_1^2 P_2}{Q^3}   + \frac{P_2^2  P_1^{3/2 }}{ Q^4} \right).
\end{align*}

Substituting the  value of $C_0$ and the bound for $ \hat{S}_0 (q) $ in equation \eqref{S plus before second cauchy} we obtain

\begin{align*}
S_3^+(N)&\ll  P^\varepsilon \frac{N}{  P_1 P_2}   \left|\frac{\tau(\chi_2)}{ \tau ( \overline{\chi_1} )}  \right|  \frac{N}{P_1 } \frac{P_1}{\sqrt{N}}   \int_0^1  \sum_{1\leq q\leq Q}  \frac{1}{q} \sum_{\substack{ L \ll P_1 \\ L \textrm{-} \textrm{dyadic}}}  \left\lbrace\frac{ P_1^2 P_2}{Q^3}  +  \frac{P_2^2  P_1^{3/2 }}{ Q^4} \right\rbrace^{1/2} \\
& \ll  P^{\varepsilon} \frac{N^{3/2}}{ P_1^{3/2} P_2^{1/2}} \left( \frac{P_1 P_2^{1/2} }{ Q^{3/2} } + \frac{P_2 P_1^{3/4}}{Q^2} \right) \ll P^\varepsilon N^{\frac{1}{2} } \left( \frac{N}{P_1^{1/2}} \frac{P_1^{3/4}}{N^{3/4}} + \frac{N P_2^{1/2}}{P_1^{3/4}} \frac{P_1}{N}  \right) \\
& \ll  P^\varepsilon N^{\frac{1}{2} } \left( P^{1/4} P_1^{1/4} + P^{1/2} P_1^{ - 1/4} \right), 
\end{align*}  as  $N \ll P= P_1 P_2$ and $ Q = \left( N/P_1 \right)^{1/2}$.

{\bf Acknowledgement:}  Authors would also like to thank Stat-Math unit, Indian Statistical Institute, Kolkata for wonderful academic atmosphere.  During the work, second author was supported by the Department of Atomic Energy, Government of India, NBHM post doctoral fellowship no: 2/40(15)/2016/R$\&$D-II/5765.

%
%
%
%
%
%
%
%
{}
\end{document}